\definecolor{amber}{rgb}{1.0,0.75,0.0}
\newtheorem{Lemma}{Lemma}[section]
\newtheorem{Remark}[Lemma]{Remark}
\newtheorem{Theorem}[Lemma]{Theorem}
\newtheorem{Prop}[Lemma]{Proposition}
\newtheorem*{MPT}{Mountain Pass Theorem} 
\newcommand{\disp}{\displaystyle}
\title{On non-local nonlinear elliptic equations involving an eigenvalue problem}
\author{Kuan-Hsiang Wang\thanks{
E-mail address: khwang0511@gmail.com (K.-H. Wang)}, Ching-yu Chen\thanks{
E-mail address: aprilchen@nuk.edu.tw (C.-Y. Chen)}, Yueh-cheng Kuo\thanks{
E-mail address: yckuo@nuk.edu.tw (Y.-C. Kuo)}, Tsung-fang Wu\thanks{
E-mail address: tfwu@nuk.edu.tw (T.-F. Wu)}}
\date{}
\begin{document} 

\maketitle


\centerline{Department of Applied Mathematics}
\centerline{National University of Kaohsiung, Kaohsiung 811, Taiwan}

\begin{abstract} 
The existence and multiplicity of solutions for a class of non-local elliptic boundary value problems with superlinear source functions are investigated in this paper.
Using variational methods, we examine the changes arise in the solution behaviours as a result of the non-local effect.
Comparisons are made of the results here with those of the elliptic boundary value problem in the absence of the non-local term under the same prescribed conditions to highlight this effect of non-locality on the solution behaviours.
Our results here demonstrate that the complexity of the solution structures is significantly increased in the presence of the non-local effect with the possibility ranging from no permissible positive solution to three positive solutions and, contrary to those obtained in the absence of the non-local term, the solution profiles also vary depending on the superlinearity of the source functions.

\fontsize{10pt}{14pt}\selectfont
\vskip2mm
\noindent {\bf Keywords:} Kirchhoff-type equations; Mountain pass theorem; positive solutions; eigenvalue problem. \\
\noindent {\bf AMS Subject Classification 2020}. 35B09, 35B40, 35J20, 35J61.
\end{abstract}

\section{Introduction}
In the present paper, we investigate the solution behaviour of a non-local elliptic boundary value problems of Kirchhoff-type with a superlinear source term, namely,
\begin{equation*}
\left\{\begin{array}{ll} 
\disp -\left(a\int_\Omega |\nabla u|^2dx+b\right)\Delta u=\lambda f(x)u+g(x)|u|^{p-2}u\ \ &\mbox{in}\ \ \Omega,\\
u=0 &\text{on}\ \partial\Omega,
\end{array}\right.
\eqno{(K_{a,\lambda})}
\end{equation*}
where $\Omega\subset\mathbb{R}^N$, $N\geq1$, is a bounded domain with smooth boundary, $2<p<2^*$ $(2^*=\frac{2N}{N-2}$ if $N\geq3$; $2^*=\infty$ if $N=1,2$) and $a, b, \lambda>0$ are real parameters. We are interested in the cases where the weight functions $f$ and $g$ are sign-changing in $\overline\Omega$ and thus we impose the following conditions:
\begin{enumerate}[$(D1)$]
\item $f\in L^\infty(\Omega)$ and $|\{x\in\Omega : f(x)>0\}|>0$,
\item $g\in L^\infty(\Omega)$ which changes sign;
\end{enumerate}
and set $b=1$ in Equation $(K_{a,\lambda})$ for simplicity.

Equation $(K_{a,\lambda})$ is a stationary variation of the generalist Kirchhoff equation,
\begin{equation}
u_{tt}-M\left(\int_\Omega |\nabla u|^2dx\right)\Delta u=f, \label{eq:K}
\end{equation}
which is an extension of the classical wave equation proposed by Kirchhoff  \cite{K} to describe the transversal oscillations of a stretched string, taking into account of the effect of changes in string length during the vibrations. The well-posedness and solvability of Equation (\ref{eq:K}) has been well investigated in general dimension and domain (see, for examples, \cite{AP, D'A-S2, L, P}), and much effort has been put into studying the solution behaviours of its stationary variants focusing on various aspects of the problem with specific formulations of $M$ and $f$, in bounded \cite{BB, D, SW1, LLS, JS} and unbounded domain \cite{LY, G, I, AF, TC, SW2, DS, DMXZ} and some have, in particular, drawn attention to the presence of the non-local effect on the solution behaviours (see, for examples, \cite{CKW, HZ, C, FIJ, DPS}). We do not intend to provide a survey on the subject and would therefore refer the interested readers to the afore mentioned references and the references therein for relevant studies.

While our focus here is also on the solution behaviour of the Kirchhoff type equation, we are particularly interested in the difference that arises with the consideration of the non-local effect.
To better explain the purpose of and the motivation behind our current study, it is necessary to reiterate some previous results on the solution structures of the semilinear boundary value problem, namely
\begin{equation}
\left\{\begin{array}{lll}
-\Delta u=\lambda f(x) u+g(x) |u|^{p-2} u,& \mbox{for}& x\in \Omega,\\
u=0,  & \mbox{for} &x\in\partial \Omega,
\end{array}\right.
\label{eq:S}
\end{equation}
by setting $a=0$ and $b=1$ in Equation ($K_{a,\lambda}$).
For the detailed analysis of this problem, we refer the readers to \cite{AT, BZ, CC}; the existence and multiplicity of solutions are summarised as follow
\begin{itemize}
\item[$(R1)$] a positive solution exists for (\ref{eq:S}) whenever $0<\lambda<\lambda_1(f)$;
\item[$(R2)$] if $\int_\Omega g\phi_1^{\,p} \,dx<0,$ there exists $\delta>0$ such that  (\ref{eq:S}) has at least two positive solutions whenever $\lambda_1(f)<\lambda<\lambda_1(f)+\delta$;
\item[$(R3)$] if $\lambda$ is sufficiently large, then no positive solution of \eqref{eq:S} is permitted;
\item [$(R4)$] if $\int_\Omega g\phi_1^{\,p} \,dx>0$, then no positive solution of \eqref{eq:S} is permitted for $\lambda>\lambda_1(f)$,
\end{itemize}
with $\lambda_1(f)$ being the positive principal eigenvalue and $\phi_1$ the corresponding positive principal eigenfunction of the problem
\begin{equation}
-\Delta u=\lambda f(x) u,\quad\mbox{for}\,\, x\in \Omega,\quad u=0,  \quad\mbox{for} \,\,x\in\partial \Omega.
\label{eq:E}
\end{equation}

\begin{Remark}\label{R1.1}
Under condition $(D1),$ there exists a sequence of eigenvalues $\{\lambda_n(f)\}$ of Equation \eqref{eq:E}
with $0<\lambda_1(f)<\lambda_2(f)\leq \cdots$ and each eigenvalue being of finite multiplicity. Denoting the principal positive eigenfunctions by $\phi_1$, we have
\begin{equation}\label{R1.2}
\lambda_1(f)=\int_\Omega|\nabla \phi_1|^2dx=\inf\left\{\int_\Omega|\nabla u|^2dx : u\in H^1_0(\Omega), \int_\Omega fu^2dx=1\right\}
\end{equation}
and
\begin{equation}\label{R1.3}
\lambda_2(f)=\inf\left\{\int_\Omega|\nabla u|^2dx : u\in H^1_0(\Omega), \int_\Omega fu^2dx=1, \int_\Omega \nabla u\nabla \phi_1dx=0\right\}.
\end{equation}
\end{Remark}

\begin{figure}[htbp]
\hspace*{0.5cm}\includegraphics[scale=0.8]{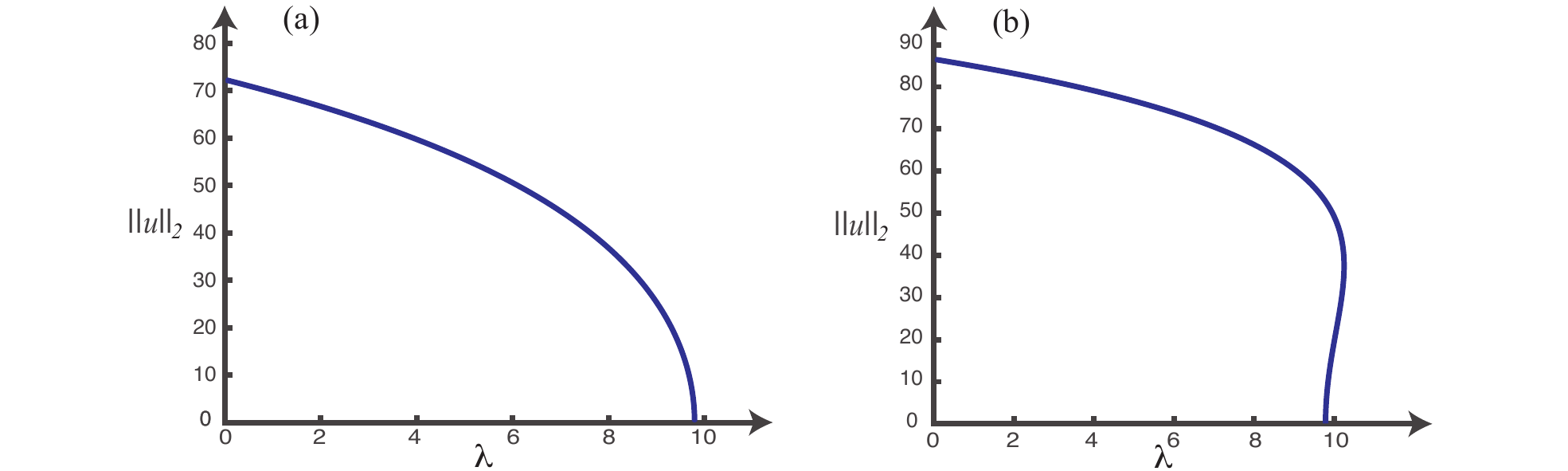}
\caption{Examples of solution structures for Equation (\ref{eq:S}) where (a) $\int_\Omega g\phi_1^{\,p} dx>0$ with $g(x)=-\sin (3\pi x)$ and (b)$\int_\Omega g\phi_1^{\,p} dx<0$ with $g(x)=-\sin (1.8\pi x)$; $p=4$ in both cases. Details of the numerical schemes are given in Section 2.}
\label{fg:a0}
\end{figure}
Results $(R1)-(R4)$ are better visualized using the two examples that we simulated numerically and presented in Figure \ref{fg:a0}.
Expressed in terms of global bifurcation theory, the results $(R1)-(R4)$ indicate that the solutions bifurcate from the branch of zero solutions in two directions depending on the sign of $\int_\Omega g\phi_1^{\,p} dx$: bifurcating to the left if $\int_\Omega g\phi_1^{\,p}dx>0$ and to the right if $\int_\Omega g\phi_1^{\,p}dx<0$, turning again to the left to give the two branches of positive solutions for $\lambda_1(f)<\lambda<\lambda_1(f)+\delta$.

The solution structures vary considerably when the non-local effect that is typical of the Kirchhoff-type equations is included into Equation (\ref{eq:S}). The summarised results of $(R1)-(R4)$ observed in the positive solutions of Equation (\ref{eq:S}) are valid for all values of $p$, whereas for the Kirchhoff-type equation we studied here, different solution behaviours are found within different regimes of $p$. Our results, which will be presented in Theorems \ref{T:p>4}-\ref{T:p<4-2} below, indicate that in the cases when $p\le 4$ Equation $(K_{a,\lambda})$ exhibits a much more complex solution structure than that of Equation ({\ref{eq:S}) with the possibility ranging from admitting no positive solution to three positive solutions when $p<4$, depending on the sign of  $\int_\Omega g\phi_1^{\,p}dx$ and varying with the values of $a$; whereas for $p=4,$ provided $a$ is sufficient large, the solution behaviours become independent on the condition imposed on the integral
$\int_\Omega g\phi_1^pdx$. When $p>4$ and in this case for all $a>0$, the non-local effect alone ensures the existence of two positive solutions without imposing any additional conditions on the integral $\int_\Omega g\phi_1^pdx$.


The sections of this paper are organised as follow. In Section 2, we present the main results in Theorems \ref{T:p>4}-\ref{T:p<4-2}, each with examples computed numerically to give graphical interpretations of these results. The proofs of these theorems are then presented in the sections that follow and we begin by showing that the energy functional is to have the mountain pass geometry while satisfying the Palais-Smale condition in Section 3. In Section 4, we prove Theorem \ref{T:p>4} and present the proofs of Theorems \ref{T:p=4+} and \ref{T:p=4-} in Section 5 and finally those of Theorems \ref{T:p<4+}-\ref{T:p<4-2} in Section 6. Furthermore, we explore the asymptotic behaviour of these positive solutions at parameter $\lambda$.

\section{Main results}
We carry out the analysis using the variational methods and the positive solutions of Equation $(K_{a,\lambda})$ are found by considering the energy functional $J_{a, \lambda} : H^1_0(\Omega) \to \mathbb{R}$ with
$$J_{a, \lambda}(u)=\frac a4\left(\int_\Omega|\nabla u|^2dx\right)^2+\frac 12\int_\Omega|\nabla u|^2dx-\frac{\lambda}{2}\int_{\Omega}fu^2dx-\frac 1p\int_{\Omega}g|u|^pdx.$$
Furthermore, $J_{a,\lambda}$ is a $C^1$ functional with the derivative given by
$$\langle J'_{a, \lambda}(u), \varphi\rangle=\left(a\int_\Omega|\nabla u|^2dx+1\right)\left(\int_\Omega\nabla u \nabla\varphi dx\right)-\lambda\int_{\Omega}fu\varphi dx-\int_{\Omega}g|u|^{p-2}u\varphi dx$$
for all $\varphi\in H^1_0(\Omega)$, where $J'_{a, \lambda}$ denotes the Fr\'echet derivative of $J_{a, \lambda}$. We conclude that $u$ is a critical point of $J_{a, \lambda}$ if and only if it satisfies Equation $(K_{a,\lambda})$.

\begin{Theorem}\label{T:p>4}
Suppose that $N=1,2,3$, $4<p<2^*$ and conditions $(D1)-(D2)$ hold. Then we have the following results.
\vspace{-6pt}
\begin{enumerate}[(i)]\itemsep -3pt
\item For each $a>0$, Equation $(K_{a,\lambda})$ has a positive solution $u^+$ with $J_{a, \lambda}(u^+)>0$ whenever $0<\lambda\leq \lambda_1(f)$.
\item For each $a>0$, there exists $\delta_a>0$ such that Equation $(K_{a,\lambda})$ has two positive solutions $u^+$ and $u^-$ with $J_{a, \lambda}(u^-)<0<J_{a, \lambda}(u^+)$ whenever $\lambda_1(f)<\lambda<\lambda_1(f)+\delta_a$. More precisely, if $\int_\Omega g\phi_1^pdx\geq0$, then
$$\delta_a=\lambda_1(f)(p-4)\left(\frac{pS_p^p}{\|g\|_\infty}\right)^{\frac{2}{p-4}}\left(\frac{a}{2p-4}\right)^{\frac{p-2}{p-4}};$$
while $\int_\Omega g\phi_1^pdx<0$, $\delta _{a}^{-}\nearrow \infty $
as $a\nearrow \infty $ and there exists a positive constant $C_0$ such that $\delta _{a}^{-}>C_{0}$ for all $a>0$.
\end{enumerate}
\end{Theorem}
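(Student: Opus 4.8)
The plan is to obtain the two positive solutions $u^+$ and $u^-$ as a mountain-pass critical point and a local-minimum critical point respectively, using the variational structure already set up for $J_{a,\lambda}$ on $H^1_0(\Omega)$. For part (i), when $0<\lambda\le\lambda_1(f)$ the quadratic part $\frac12\int|\nabla u|^2-\frac{\lambda}{2}\int fu^2$ is nonnegative by the variational characterisation \eqref{R1.2}, so together with the quartic term $\frac a4(\int|\nabla u|^2)^2$ the functional $J_{a,\lambda}$ is coercive on small spheres and bounded below near the origin; since $p>4$ the quartic term dominates $\frac1p\int g|u|^p$ only after the source term has already forced $J_{a,\lambda}(tu)\to-\infty$ along any direction with $\int g|u|^p>0$ (such directions exist by $(D2)$), so $J_{a,\lambda}$ has the mountain-pass geometry. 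First I would verify the Palais--Smale condition — here the superquadraticity $p>4$ versus the quartic growth is exactly what makes PS sequences bounded, via the standard $J_{a,\lambda}(u_n)-\frac1p\langle J'_{a,\lambda}(u_n),u_n\rangle$ estimate — and then apply the Mountain Pass Theorem to get a critical point $u^+$ at a positive critical level, with positivity obtained by working with $g|u|^{p-2}u$ and the maximum principle, or equivalently by truncation.

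For part (ii), when $\lambda$ slightly exceeds $\lambda_1(f)$, the origin becomes a saddle: along $\phi_1$ the quadratic part is now negative, so $J_{a,\lambda}(t\phi_1)=\frac a4t^4\lambda_1(f)^2+\frac12 t^2(\lambda_1(f)-\lambda)\int f\phi_1^2\cdot\lambda_1(f)^{-1}\cdots-\frac{t^p}{p}\int g\phi_1^p+o$, and the competition between the negative $t^2$ term, the positive $t^4$ term, and the $t^p$ term with $p>4$ is what produces a second critical point of negative energy. The clean way is to show that $J_{a,\lambda}$ has a strict local minimum $u^-$ with $J_{a,\lambda}(u^-)<0$: one exhibits a point (a small multiple of $\phi_1$, or a minimiser over a suitable sublevel set or ball) where $J_{a,\lambda}$ is negative, while $J_{a,\lambda}$ stays bounded below and coercive on an appropriate region — again because $p>4$ makes the quartic term eventually dominate and, for $\lambda$ close to $\lambda_1(f)$, the negative quadratic contribution is small. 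Then the mountain-pass construction between $0$ (or $u^-$) and a far-away point with very negative energy gives $u^+$ with $J_{a,\lambda}(u^+)>0$, and one checks $u^-\ne u^+$ from the sign of the energies.

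The quantitative claims on $\delta_a$ are the delicate part. When $\int_\Omega g\phi_1^p\,dx\ge 0$ the stated explicit $\delta_a$ should come from making the local-minimum argument effective: one needs a radius $r$ and a sphere on which $J_{a,\lambda}\ge 0$, requiring $\frac a4 r^4+\frac12 r^2(1-\lambda/\lambda_1(f))r^2\ge \frac1p\|g\|_\infty S_p^{-p}r^p$ (after bounding $\int fu^2\le \lambda_1(f)^{-1}\int|\nabla u|^2$ and $\int g|u|^p\le\|g\|_\infty S_p^{-p}(\int|\nabla u|^2)^{p/2}$ with $S_p$ the Sobolev constant), optimise the resulting inequality in $r^2=\int|\nabla u|^2$, and read off the largest $\lambda-\lambda_1(f)$ for which it holds; the exponents $\frac{2}{p-4}$ and $\frac{p-2}{p-4}$ and the constant $\frac{a}{2p-4}$ are precisely the output of that one-variable optimisation (maximising $\frac a4 t^2-\frac1p\|g\|_\infty S_p^{-p}t^{p/2-1}$-type expression). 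When $\int_\Omega g\phi_1^p\,dx<0$ one cannot get a clean formula; instead I would argue that the obstruction to a second solution is the need for a mountain-pass geometry on the $\lambda$-dependent functional, and that since $\int g\phi_1^p<0$ the direction $\phi_1$ itself gives $J_{a,\lambda}(t\phi_1)\to+\infty$, so the relevant threshold $\delta_a^-$ is governed by a minimax quantity that grows with $a$ (the quartic term pushes the mountain range up and out), yielding $\delta_a^-\nearrow\infty$; the uniform lower bound $\delta_a^->C_0$ for all $a>0$ follows since at $a=0$ the semilinear result $(R2)$ already gives a positive $\delta$, and the extra quartic term only helps. I expect the main obstacle to be verifying the Palais--Smale condition uniformly enough near $\lambda=\lambda_1(f)$ and, for the sharp $\delta_a$, tracking constants carefully through the Sobolev and eigenvalue estimates so that the optimisation reproduces exactly the claimed expression.
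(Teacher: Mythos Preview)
Your overall plan—mountain pass for $u^+$, local minimisation in a ball for $u^-$, positivity via $J_{a,\lambda}(u)=J_{a,\lambda}(|u|)$—matches the paper's proof. Two points of comparison and one genuine gap are worth noting.

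\textbf{Palais--Smale boundedness.} Your route via $J_{a,\lambda}(u_n)-\tfrac1p\langle J'_{a,\lambda}(u_n),u_n\rangle = a(\tfrac14-\tfrac1p)\|u_n\|^4+(\tfrac12-\tfrac1p)(\|u_n\|^2-\lambda\int fu_n^2)$ works cleanly: for $p>4$ the quartic term dominates the possibly negative quadratic term $-c_\lambda\|u_n\|^2$, giving boundedness directly. The paper instead argues by contradiction, dividing $J_{a,\lambda}(u_n)\to\alpha$ and $\langle J'_{a,\lambda}(u_n),u_n\rangle\to 0$ by $\|u_n\|^4$ to obtain $\|u_n\|^{p-4}\int g|v_n|^p\to \tfrac{ap}{4}$ and $\to a$ simultaneously. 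Your approach is more direct.

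\textbf{The explicit $\delta_a$ when $\int g\phi_1^p\ge 0$.} Your sketch is exactly the paper's computation in Lemma~\ref{MP:p>4}(i): bound $J_{a,\lambda}(u)\ge \frac a4\|u\|^4+\frac12(1-\lambda/\lambda_1(f))\|u\|^2-\tfrac{\|g\|_\infty}{pS_p^p}\|u\|^p$ and optimise the resulting one-variable function $h^+(\rho)=\tfrac a4\rho^2+\tfrac12(1-\lambda/\lambda_1(f))-\tfrac{\|g\|_\infty}{pS_p^p}\rho^{p-2}$; the maximum at $\rho_a=\bigl(\tfrac{apS_p^p}{(2p-4)\|g\|_\infty}\bigr)^{1/(p-4)}$ gives the stated $\delta_a$.

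\textbf{The gap: the case $\int_\Omega g\phi_1^p\,dx<0$.} Your proposed argument for the uniform lower bound $\delta_a^->C_0$—invoke $(R2)$ at $a=0$ and then observe that adding $\tfrac a4\|u\|^4\ge 0$ only helps—requires knowing that the proof of $(R2)$ actually produces a sphere on which $\inf J_{0,\lambda}>0$, which is a statement about the \emph{proof} in \cite{AT,BZ,CC}, not the result as stated. The paper avoids this dependence by carrying out the eigenfunction decomposition $u=t\phi_1+w$ explicitly (Lemma~\ref{MP:p>4}(ii)): using \eqref{u1}--\eqref{u4}, the mean-value theorem and Young's inequality to compare $\int g|t\phi_1+w|^p$ with $\int g|t\phi_1|^p$, one arrives at a lower bound of the form $J_{a,\lambda}(u)\ge \tfrac a4\|u\|^4+\theta_1\|u\|^2+(\theta_2-\theta_1)\|w\|^2+B_1\|u\|^p-B_2\|w\|^p$ with $\theta_2-\theta_1>0$ involving $\lambda_2(f)$. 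Optimising this over $\|w\|\in[0,\rho]$ yields the explicit $\delta_a^-$ with $\delta_a^->\lambda_2(f)-\lambda_1(f)$ in one regime and an explicit formula growing to $\infty$ with $a$ in the other. Your heuristic ``the quartic term pushes the mountain range up'' is correct for $\delta_a^-\nearrow\infty$ (indeed the crude Sobolev bound alone already gives this, since the same $\delta_a^+$ formula works irrespective of the sign of $\int g\phi_1^p$), but for the uniform positive lower bound as $a\searrow 0$ you genuinely need the second-eigenvalue splitting, since the Sobolev-only bound $\delta_a^+\to 0$ as $a\to 0$.
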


\begin{figure}[htbp]
\hspace*{0.5cm}\includegraphics[scale=0.8]{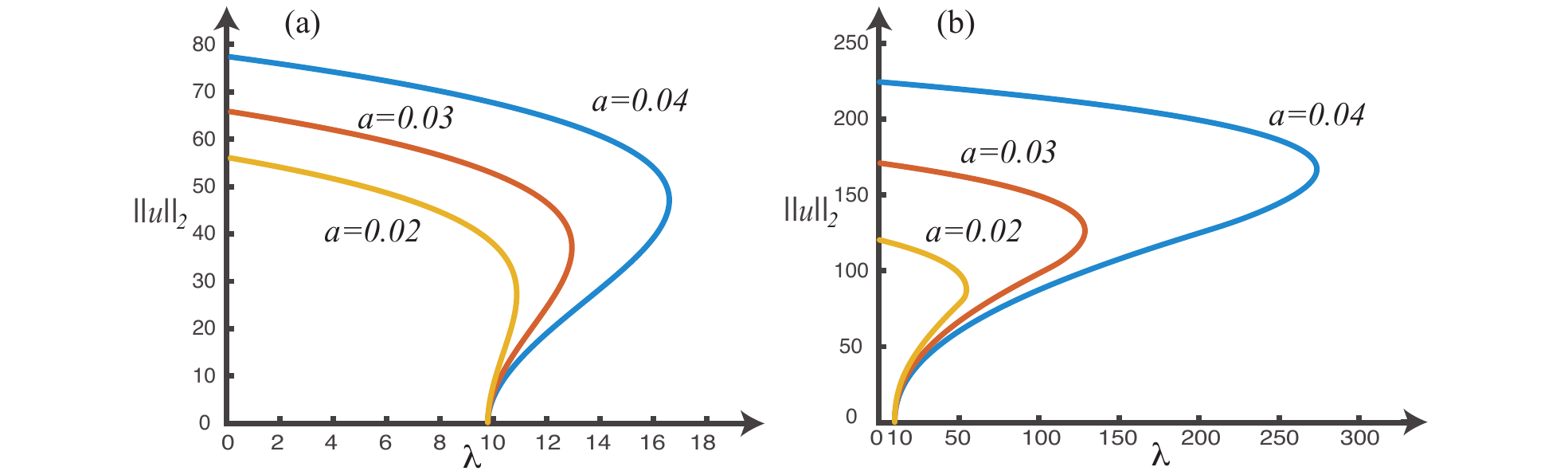}
\caption{Simulations for Theorem \ref{T:p>4} with $p=5$ showing the results of various $a$. Multiple solutions exist for both (a) $\int_\Omega g\phi_1^p dx>0$ with $g(x)=-\sin (3\pi x)$ and (b) $\int_\Omega g\phi_1^p dx<0$ with $g(x)=\sin (3\pi x)$. The bifurcation point $\lambda_1(f)$ is located at $\pi^2$ in all the simulations. }
\label{fg:p5}
\end{figure}

In this regime of $p$, the presence of the non-local effect is sufficient to guarantee the existence of two positive solutions.
We illustrate this result using the two examples in Figure \ref{fg:p5} taking $p=5$ with (a) $\int_\Omega g\phi_1^pdx>0$ on the left and (b) $\int_\Omega g\phi_1^pdx<0$ on the right.
In both examples and for each value of $a$, the existence of two branches of solutions described by Theorem \ref{T:p>4} $(ii)$ can be observed as the solution branch bifurcates to the right from the branch of zero solutions at $\lambda_1(f)$, then turning at a point  $\lambda>\lambda_1(f)+\delta_a$; this turning point being pushed further to the right with increasing value of $a$ (and hence $\delta_a$) is evident from these curves. The upper branch extending pass the initial bifurcation point $\lambda_1(f)$ towards $\lambda=0$ thus depicts the branch of positive solutions $u^+$ whenever $0<\lambda\leq \lambda_1(f)$ stated in Theorem \ref{T:p>4} $(i)$.

The difference between our results and those of Equation (\ref{eq:S}) can be observed by comparing Figure \ref{fg:p5} and Figure \ref{fg:a0}.
For demonstration purpose, we have set $p=4$ in Figure \ref{fg:a0}, the solution behaviour in general (i.e. the direction of bifurcation and the number of positive solutions) does not vary with the value of $p$ for Equation (\ref{eq:S}). In the case when $\int_\Omega g\phi_1^pdx >0$, two positive solutions are observed in Figure \ref{fg:p5} (a), whereas in Figure \ref{fg:a0} (a) only a single solution branch is present.  In Figure \ref{fg:p5} (b) and Figure \ref{fg:a0} (b) when $\int_\Omega g\phi_1^pdx <0$, both exhibit regions with two positive solutions. While this region is finite for Equation (\ref{eq:S}) as indicated by $(R3)$, in the presence of the non-local effect, this region of multiple positive solution continues to expand to the right as the non-local effect grows, leading eventually to the conclusion that at least a positive solution exists for $0<\lambda<\infty.$

Note that we have set $f\equiv1$ and $\overline\Omega=[0,1]$ in all the simulations including those of Equation (\ref{eq:S}) in Figure \ref{fg:a0}. Consequently, the positive principal eigenvalue and the corresponding eigenfunction satisfying the boundary condition are given by  $\lambda_1(f)=\pi^2$ and $\phi_1(x)=\sin(\pi x)$ respectively.
These solution branches are numerically generated using a continuation scheme and computed using matlab codes. An initial non-trivial solution is required for the scheme which is derived using a fixed point iteration method and subsequently rescaled to give a suitable starting point for the continuation method. The details of the continuation method and the fixed point scheme can be found in \cite{Keller, Kuo}.

Next, we present the cases when $p=4$ in Theorems \ref{T:p=4+} and \ref{T:p=4-} where it is necessary to consider the following:
$$\Gamma_0:=\sup_{u\in H^1_0(\Omega)\setminus\{0\}}\frac{\int_\Omega gu^4dx}{\left(\int_\Omega|\nabla u|^2dx\right)^2}.$$
By H\"older and Sobolev inequalities, we conclude that $0<\Gamma_0<\infty$. It is then possible to choose $v\in H^1_0(\Omega)$ such that $\int_\Omega gv^4dx>\int_\Omega g\phi_1^4dx$ and $\int_\Omega|\nabla v|^2dx\leq \int_\Omega|\nabla \phi_1|^2dx$ since $g$ changes sign, giving subsequently $\Gamma_0>\lambda_1(f)^{-2}\int_\Omega g\phi_1^4dx$.

\begin{Theorem}\label{T:p=4+}
Suppose that $N=1,2,3$, $p=4$ and conditions $(D1)-(D2)$ hold. If $\int_\Omega g\phi_1^4dx>0$, then we have the following results.
\vspace{-8pt}
\begin{enumerate}[(i)]\itemsep-3pt
\item For each $0<a\leq\lambda_1(f)^{-2}\int_\Omega g\phi_1^4dx$, Equation $(K_{a,\lambda})$ has a positive solution $u^+$ with $J_{a, \lambda}(u^+)>0$ whenever $0<\lambda<\lambda_1(f)$.
\item For each $\lambda_1(f)^{-2}\int_\Omega g\phi_1^4dx<a<\Gamma_0$,
\vspace{-8pt}
\begin{itemize}\itemsep-3pt
\item[(ii-1)]Equation $(K_{a,\lambda})$ has a positive solution $u^+$ with $J_{a, \lambda}(u^+)>0$ whenever $0<\lambda\leq\lambda_1(f)$$;$
\item [(ii-2)] there exists $\delta_0>0$ such that Equation $(K_{a,\lambda})$ has two positive solutions $u^+$ and $u^-$ with $J_{a, \lambda}(u^-)<0<J_{a, \lambda}(u^+)$ whenever $\lambda_1(f)<\lambda< \lambda_1(f)+\delta_0$.
\end{itemize}
\vspace{-4pt}
\item For each $a\geq\Gamma_0$, Equation $(K_{a,\lambda})$ does not admit nontrivial solution whenever $0<\lambda\leq \lambda_1(f)$.
\item For each $a>\Gamma_0$, Equation $(K_{a,\lambda})$ has a positive solution $u^-$ with $J_{a, \lambda}(u^-)<0$ whenever $\lambda>\lambda_1(f)$.
\end{enumerate}
\end{Theorem}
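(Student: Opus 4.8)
The plan rests on the quartic homogeneity peculiar to $p=4$: for $u=\rho v$ with $\int_\Omega|\nabla v|^2dx=1$,
$$J_{a,\lambda}(\rho v)=\frac{\rho^2}{2}\Big(1-\lambda\int_\Omega fv^2dx\Big)+\frac{\rho^4}{4}\Big(a-\int_\Omega gv^4dx\Big),$$
so the behaviour of $J_{a,\lambda}$ near and far from the origin is dictated by $\sup\{\int_\Omega fv^2dx:\|\nabla v\|_2=1\}=\lambda_1(f)^{-1}$ (from \eqref{R1.2}), by $\sup\{\int_\Omega gv^4dx:\|\nabla v\|_2=1\}=\Gamma_0$, and by the fact that the unit principal direction $v_\ast=\phi_1/\sqrt{\lambda_1(f)}$ attains the first supremum while $\int_\Omega gv_\ast^4dx=\mu$, where $\mu:=\lambda_1(f)^{-2}\int_\Omega g\phi_1^4dx$ satisfies $0<\mu<\Gamma_0$ (the left inequality is the hypothesis $\int_\Omega g\phi_1^4dx>0$, the right one was recorded just before the statement). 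The four assertions split according to the position of $a$ relative to $\mu$ and $\Gamma_0$.

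First I would establish the mountain pass geometry of $J_{a,\lambda}$, which together with the Palais--Smale condition of Section~3 yields the high-energy solution $u^+$ in (i), (ii-1) and the first solution in (ii-2). For $0<\lambda<\lambda_1(f)$ and every $a>0$ the crude bounds in the displayed identity already give $J_{a,\lambda}(\rho v)\ge\tfrac14\big(1-\lambda/\lambda_1(f)\big)\rho^2$ for $\rho$ small. For $\lambda=\lambda_1(f)$ this linear term vanishes and one must use $a>\mu$: a Rellich-type compactness argument shows that on the unit sphere $1-\lambda_1(f)\int_\Omega fv^2dx$ is bounded below by some $m>0$ outside any $\delta$-neighbourhood of $\{\pm v_\ast\}$ (a sequence along which it tends to $0$ is minimising for \eqref{R1.2} and so converges strongly to $\pm v_\ast$), whereas inside a small enough neighbourhood $a-\int_\Omega gv^4dx\ge(a-\mu)/2>0$ by continuity of $v\mapsto\int_\Omega gv^4dx$; hence $J_{a,\lambda_1(f)}(\rho v)\ge\alpha_0>0$ on some sphere $\|\nabla u\|_2=\rho_0$. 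Since $J_{a,\lambda}(\rho_0 v)=J_{a,\lambda_1(f)}(\rho_0 v)-\tfrac12(\lambda-\lambda_1(f))\rho_0^2\int_\Omega fv^2dx\ge\alpha_0-\tfrac{(\lambda-\lambda_1(f))\rho_0^2}{2\lambda_1(f)}$, there is $\delta_0>0$ with $J_{a,\lambda}\ge\alpha_0/2$ on that sphere for all $\lambda_1(f)\le\lambda<\lambda_1(f)+\delta_0$. In every case above $a<\Gamma_0$, so one can pick $v_0$ with $\int_\Omega gv_0^4dx>a\|\nabla v_0\|_2^4$, whence $J_{a,\lambda}(tv_0)\to-\infty$ and a mountain beyond the sphere exists. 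The mountain pass theorem then produces a critical point $u^+$ with $J_{a,\lambda}(u^+)>0$; passing to the positive-part truncation of $J_{a,\lambda}$ one may take $u^+\ge0$, and $L^\infty$-regularity plus the strong maximum principle give $u^+>0$.

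Next the low-energy solution $u^-$. For $\lambda>\lambda_1(f)$ one has $J_{a,\lambda}(t\phi_1)=\tfrac{t^4}{4}\big(a\lambda_1(f)^2-\int_\Omega g\phi_1^4dx\big)-\tfrac{t^2}{2}(\lambda-\lambda_1(f))<0$ for $t>0$ small, using $a>\mu$. In part (iv), $a>\Gamma_0$ makes $J_{a,\lambda}(u)\ge\tfrac{a-\Gamma_0}{4}\|\nabla u\|_2^4+\tfrac12\big(1-\lambda/\lambda_1(f)\big)\|\nabla u\|_2^2$ coercive, so the (weakly lower semicontinuous) functional attains a global minimum $u^-$, which is nonzero by the previous line and positive after the usual reduction. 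For the second solution in (ii-2) one instead minimises $J_{a,\lambda}$ over the ball $\{\|\nabla u\|_2\le\rho_0\}$: the infimum is negative (take $t\phi_1$ with $t$ small) yet $J_{a,\lambda}\ge\alpha_0/2>0$ on the bounding sphere, so the minimiser lies in the open ball and is a free critical point with $J_{a,\lambda}(u^-)<0<J_{a,\lambda}(u^+)$ — in particular distinct from $u^+$ — again positive.

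Finally part (iii) is a direct argument: if $u\neq0$ solves $(K_{a,\lambda})$, testing with $u$ gives $\int_\Omega g|u|^4dx=at^2+t-\lambda\int_\Omega fu^2dx$ with $t=\|\nabla u\|_2^2>0$; combined with $\int_\Omega g|u|^4dx\le\Gamma_0 t^2$, $\int_\Omega fu^2dx\le t/\lambda_1(f)$ and the hypotheses $a\ge\Gamma_0$, $0<\lambda\le\lambda_1(f)$ this forces successively $a=\Gamma_0$, $\lambda=\lambda_1(f)$ and $\|\nabla u\|_2^2=\lambda_1(f)\int_\Omega fu^2dx$; the last equality makes $u$ a minimiser for \eqref{R1.2}, so $u=c\phi_1$, whereupon $\int_\Omega g|u|^4dx=\Gamma_0 t^2$ reduces to $\mu=\Gamma_0$, a contradiction. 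I expect the main obstacle to be the borderline regime $\lambda=\lambda_1(f)$ with $a<\Gamma_0$ entering (ii-1) and, via perturbation, (ii-2): there both the mountain pass geometry and the compactness behind Palais--Smale degenerate in the $\phi_1$-direction, and the argument hinges on the \emph{strict} gap $\mu<a<\Gamma_0$ — used to separate $v_\ast$ in the sphere estimate and to exclude unbounded Palais--Smale sequences concentrating along $\phi_1$ (which would force $a\lambda_1(f)^2=\int_\Omega g\phi_1^4dx$). The remaining ingredients — coercivity, truncation, elliptic regularity, the maximum principle and Rellich compactness — are routine for $N\le3$.
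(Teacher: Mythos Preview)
Your argument is correct and agrees with the paper on parts~(iii)--(iv) and on obtaining $u^-$ by local (respectively global) minimisation; the genuine difference is in how you establish the mountain pass geometry for $\lambda_1(f)\le\lambda<\lambda_1(f)+\delta_0$ when $\mu<a<\Gamma_0$. The paper (Lemma~\ref{MP:p=4}(ii)) works with the orthogonal decomposition $u=t\phi_1+w$, expands $\int_\Omega g|t\phi_1+w|^4dx-\int_\Omega g|t\phi_1|^4dx$ via the mean value theorem and Young's inequality, and produces an explicit radius $\rho_0$ in \eqref{rho4} on which $J_{a,\lambda}$ is uniformly positive. Your approach is softer: split the unit sphere $\{\|v\|=1\}$ into a small neighbourhood of $\pm v_\ast$ (where $a-\int_\Omega gv^4dx\ge(a-\mu)/2$ by continuity) and its complement (where compactness of minimising sequences for \eqref{R1.2} and simplicity of $\lambda_1(f)$ force $1-\lambda_1(f)\int_\Omega fv^2dx\ge m>0$), then choose $\rho_0$ small enough that the possibly negative quartic term outside the neighbourhood is dominated by $\tfrac{m}{2}\rho_0^2$. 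Your route is cleaner conceptually and avoids the constants $\Phi_4(\phi_1),B_4$; the paper's route is quantitative and closer to giving an explicit $\delta_0$.

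One point to tighten: ``the Palais--Smale condition of Section~\ref{sec:P-S}'' only asserts that \emph{bounded} $(PS)$-sequences converge (Lemma~\ref{LPS}); boundedness must be supplied separately. For $0<\lambda<\lambda_1(f)$ it is the one-line identity $4J_{a,\lambda}(u_n)-\langle J_{a,\lambda}'(u_n),u_n\rangle=\|u_n\|^2-\lambda\int_\Omega fu_n^2dx\ge(1-\lambda/\lambda_1(f))\|u_n\|^2$ (this is Lemma~\ref{BD4}(i)). For $\lambda\ge\lambda_1(f)$ your closing sketch is the right idea, but ``concentrating along $\phi_1$'' is only immediate at $\lambda=\lambda_1(f)$; for $\lambda$ slightly larger the paper (Lemma~\ref{BD4}(ii)) needs an extra step---testing the equation against the weak limit $v_0$ to obtain $\|v_0\|=1$---before the contradiction $a=\mu$ can be reached via a further compactness argument as $\lambda\searrow\lambda_1(f)$.
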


\begin{figure}[htbp]
\hspace*{-1.8cm}\includegraphics[scale=0.72]{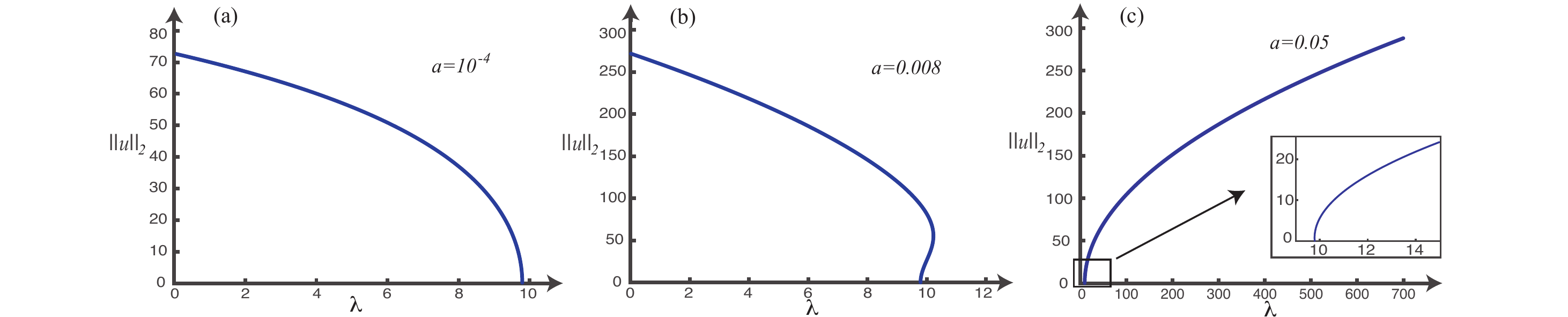}
\caption{Simulations for Theorem \ref{T:p=4+} with $p=4$ and increasing values of $a$ from (a)-(c) satisfying $\int_\Omega g\phi_1^4dx>0$; $g(x)=-\sin(3\pi x)$ is used for all three cases. }
\label{fg:p4+}
\end{figure}

Examples of Theorem \ref{T:p=4+} satisfying the conditions $\int_\Omega g\phi_1^4dx>0$ are shown in Figure \ref{fg:p4+} (a)-(c).
In (a), with $a=10^{-4}$ a single branch of positive solution describing Theorem \ref{T:p=4+} $(i)$ is observed bifurcating to the left; whereas increasing the value of $a$ to $a=0.008$ in (b),
the solution branch initially bifurcates to the right before turning at a point $\lambda>\lambda_1(f)+\delta_0$ giving two solution branches for $\lambda_1(f)<\lambda< \lambda_1(f)+\delta_0$ described by Theorem \ref{T:p=4+} ($ii$-2) and a single solution branch whenever $0<\lambda\leq\lambda_1(f)$ by Theorem \ref{T:p=4+} ($ii$-1). Figure \ref{fg:p4+} (c) demonstrates the results of Theorem \ref{T:p=4+} $(iii)$-$(iv)$ when the values of $a$ exceed a certain threshold value (namely, $\Gamma_0$). A single branch of positive solutions is seen branching off to the right without turning giving the result of case $(iv)$ that at least one positive solution exists when $\lambda>\lambda_1(f)$, and for case $(iii)$ when $0<\lambda\leq \lambda_1(f)$, no positive solution is admissible. Note that the solution behaviours of (b) and (c) are similar near the bifurcation point $\lambda_1(f)$ and thus to ensure that no turning occurs in (c), we perform the simulations until $\lambda=700$ with $\|u\|_2$ reaching the same scale as that in (b), but turning is only observed in (b).

For small values of $a$, the results here are similar to $(R1)$ and $(R4)$ of Equation (\ref{eq:S}), as demonstrated in Figure \ref{fg:p4+} (a) and Figure \ref{fg:a0} (a), both imposing the condition $\int_\Omega g\phi_1^4dx>0$. However, with a slightly larger value of $a$ in Figure \ref{fg:p4+} (b), the increased non-local effect pushes the bifurcated solution branch to the right before turning again to the left, as a result, multiple solutions appear for $\lambda>\lambda_1(f)$ immediately to the right of $\lambda_1(f)$. Increasing the value of $a$ further in Figure \ref{fg:p4+} (c), the bifurcated solution branch is prevented from turning to the left again and thus contrary to the result of $(R1)$ and $(R4)$, a positive solution exists for $\lambda>\lambda_1(f)$ while no solution is permitted for $0<\lambda\leq\lambda_1(f)$.

 \begin{Theorem}\label{T:p=4-}
Suppose that $N=1,2,3$, $p=4$ and conditions $(D1)-(D2)$ hold. If $\int_\Omega g\phi_1^4dx\leq0$, then we have the following results.
\vspace{-8pt}
\begin{enumerate}[(i)]\itemsep-3pt
\item For each $0<a<\Gamma_0$,
\vspace{-8pt}
\begin{itemize}\itemsep-3pt
\item[(i-1)] Equation $(K_{a,\lambda})$  has a positive solution $u^+$ with $J_{a, \lambda}(u^+)>0$ whenever $0<\lambda\leq\lambda_1(f)$$;$
\item[(i-2)] there exists $\delta_0>0$ such that Equation $(K_{a,\lambda})$ has two positive solutions $u^+$ and $u^-$ with $J_{a, \lambda}(u^-)<0<J_{a, \lambda}(u^+)$ whenever $\lambda_1(f)<\lambda< \lambda_1(f)+\delta_0$.
\end{itemize}
\vspace{-4pt}
\item For each $a\geq\Gamma_0$, Equation $(K_{a,\lambda})$ does not admit nontrivial solution whenever $0<\lambda\leq \lambda_1(f)$.
\item For each $a>\Gamma_0$, Equation $(K_{a,\lambda})$ has a positive solution $u^-$ with $J_{a, \lambda}(u^-)<0$ whenever $\lambda>\lambda_1(f)$.
\end{enumerate}
\end{Theorem}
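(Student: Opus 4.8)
The plan is to treat this theorem in close parallel with Theorem \ref{T:p=4+}, the main difference being the sign condition $\int_\Omega g\phi_1^4\,dx\le 0$, which is now \emph{more favourable} for producing the mountain pass solution $u^+$ near $\lambda_1(f)$ and therefore removes the small-$a$ restriction that appeared in case (i) of Theorem \ref{T:p=4+}. First I would record the key algebraic identity: since $p=4$, for $u\in H^1_0(\Omega)$ with $\int_\Omega fu^2\,dx>0$ one has
\[
J_{a,\lambda}(tu)=\frac{t^4}{4}\Bigl(a\bigl(\textstyle\int_\Omega|\nabla u|^2\bigr)^2-\int_\Omega g|u|^4\Bigr)+\frac{t^2}{2}\Bigl(\int_\Omega|\nabla u|^2-\lambda\int_\Omega fu^2\Bigr),
\]
so the quartic coefficient is controlled by the quantity $\Gamma_0$: it is strictly positive for \emph{every} $u\neq 0$ precisely when $a>\Gamma_0$, and it can be made nonpositive for a suitable $u$ when $a<\Gamma_0$. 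This dichotomy is exactly what drives cases (i) versus (ii)–(iii).

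For part (i-1), fix $0<a<\Gamma_0$ and $0<\lambda\le\lambda_1(f)$. Using Section 3 (the functional $J_{a,\lambda}$ has the mountain pass geometry and satisfies the Palais--Smale condition in this parameter range), I would verify the geometry: the origin is a strict local minimum because for $0<\lambda\le\lambda_1(f)$ the quadratic part $\frac12\int_\Omega|\nabla u|^2-\frac\lambda2\int_\Omega fu^2\ge 0$ by \eqref{R1.2}, while the quartic term $\frac a4(\int|\nabla u|^2)^2$ dominates the negative $-\frac14\int_\Omega g|u|^4$ for small $\|u\|$ (by Sobolev); and there is a point $e$ with $J_{a,\lambda}(e)<0$ because $a<\Gamma_0$ lets us pick $u$ with $a(\int|\nabla u|^2)^2-\int_\Omega g|u|^4<0$, so $J_{a,\lambda}(tu)\to-\infty$ as $t\to\infty$. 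The Mountain Pass Theorem then yields a critical point $u^+$ with $J_{a,\lambda}(u^+)\ge$ the mountain-pass level $>0$; replacing $g|u|^{p-2}u$ etc. by their standard positive-part truncation and applying the maximum principle gives positivity of $u^+$, as in Section 3. For part (i-2), fix $0<a<\Gamma_0$ and let $\lambda$ slightly exceed $\lambda_1(f)$. The solution $u^+$ is obtained as before (the mountain-pass geometry persists for $\lambda$ in a right-neighbourhood of $\lambda_1(f)$, shrinking the size of the well as needed); the key point is that since $\int_\Omega g\phi_1^4\,dx\le 0$, testing along $t\phi_1$ gives $J_{a,\lambda}(t\phi_1)=\frac{t^4}{4}(a(\int|\nabla\phi_1|^2)^2-\int_\Omega g\phi_1^4)+\frac{t^2}{2}(\lambda_1(f)-\lambda)\int_\Omega f\phi_1^2$ with a \emph{positive} quartic coefficient, so $J_{a,\lambda}(t\phi_1)\to+\infty$ and one still finds a mountain-pass point at positive level. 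For the second solution $u^-$, I would use the fact that for $\lambda>\lambda_1(f)$ the quadratic part is negative in the $\phi_1$-direction, so $\inf J_{a,\lambda}$ over a suitable set (e.g. a small ball, or a constrained minimisation producing a local minimiser near the origin) is strictly negative; minimising and using the Palais--Smale condition produces a second critical point $u^-$ with $J_{a,\lambda}(u^-)<0$, and $\delta_0$ is the width of the $\lambda$-interval on which both constructions are simultaneously valid. Parts (ii) and (iii) are proved exactly as in Theorem \ref{T:p=4+}: for (ii), if $a\ge\Gamma_0$ then for any nontrivial solution $u$, pairing $J'_{a,\lambda}(u)=0$ with $u$ gives $(a\int|\nabla u|^2+1)\int|\nabla u|^2=\lambda\int fu^2+\int g|u|^4\le \lambda\int fu^2+\Gamma_0(\int|\nabla u|^2)^2$, whence $\int|\nabla u|^2\le\lambda\int fu^2\le\lambda\lambda_1(f)^{-1}\int|\nabla u|^2$ by \eqref{R1.2}, forcing $\lambda>\lambda_1(f)$ — contradiction when $0<\lambda\le\lambda_1(f)$; for (iv), when $a>\Gamma_0$ and $\lambda>\lambda_1(f)$ the quartic coefficient $a(\int|\nabla u|^2)^2-\int g|u|^4$ is bounded below by $(a-\Gamma_0)(\int|\nabla u|^2)^2>0$, so $J_{a,\lambda}$ is coercive and bounded below, and the negativity along $\phi_1$ shows the infimum is negative and attained, giving $u^-$ with $J_{a,\lambda}(u^-)<0$; positivity again by the truncation-plus-maximum-principle argument.

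The main obstacle I anticipate is the construction of the \emph{second} positive solution $u^-$ in (i-2) and its separation from $u^+$: one must show that the negative-energy critical point is genuinely distinct from the (positive-energy) mountain-pass solution and is positive, and one must produce an \emph{explicit} (or at least uniform-in-structure) $\delta_0>0$ for which all the geometric inequalities hold at once — this requires careful quantitative estimates tying together the Sobolev constant, $\|g\|_\infty$, $\lambda_1(f)$, and $a<\Gamma_0$, and is where the $p=4$ resonance makes the bookkeeping delicate. A secondary technical point is verifying the Palais--Smale condition for $0<a<\Gamma_0$ in the whole relevant $\lambda$-range (for $a\ge\Gamma_0$ coercivity makes it immediate, but for $a<\Gamma_0$ one must rule out non-compact Palais--Smale sequences, presumably using $4<p$-type truncation arguments replaced here by the structure of the quartic term and the range of $\lambda$); I would expect this to be dispatched by the lemmas already established in Section 3.
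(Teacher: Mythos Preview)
Your overall strategy matches the paper's: mountain pass for $u^+$, local minimisation inside the mountain-pass ball for $u^-$ when $\lambda>\lambda_1(f)$, the identity $\langle J'_{a,\lambda}(u),u\rangle=0$ for nonexistence in (ii), and coercivity plus global minimisation for (iii). Parts (ii) and (iii) are essentially correct; the only loose end is the borderline case $a=\Gamma_0$, $\lambda=\lambda_1(f)$, where your inequality only yields $\lambda\ge\lambda_1(f)$, and one must add that equality forces $u=k\phi_1$ and hence $\int_\Omega g\phi_1^4=\Gamma_0\lambda_1(f)^2>0$, contradicting the hypothesis.

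The genuine gap is in your verification of the mountain-pass geometry at $\lambda=\lambda_1(f)$ and just above. You claim that the Kirchhoff quartic $\tfrac{a}{4}\|u\|^4$ ``dominates the negative $-\tfrac14\int_\Omega g|u|^4$ for small $\|u\|$ (by Sobolev)''. This is false: since $p=4$, both terms are homogeneous of degree four, so their ratio is scale-invariant, and $a<\Gamma_0$ means precisely that there are directions along which $a\|u\|^4<\int_\Omega g|u|^4$ at \emph{every} scale. Your argument therefore gives a positive lower bound on a small sphere only when the quadratic part is strictly positive, i.e.\ for $0<\lambda<\lambda_1(f)$; at $\lambda=\lambda_1(f)$ the quadratic part vanishes along $\phi_1$ and your sketch produces nothing. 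The paper (Lemma~\ref{MP:p=4}(iii)) handles this by the spectral decomposition $u=t\phi_1+w$ with $\int_\Omega\nabla w\nabla\phi_1=0$: the hypothesis $\int_\Omega g\phi_1^4\le 0$ guarantees $\Phi_4(\phi_1):=a\lambda_1(f)^2-\int_\Omega g\phi_1^4>0$ for every $a>0$, and a mean-value/Young estimate on $\int_\Omega g(|t\phi_1+w|^4-|t\phi_1|^4)$ yields the uniform sphere bound for $\lambda_1(f)\le\lambda<\lambda_1(f)+\delta_1$. This decomposition is the actual content of the $p=4$ case and is absent from your proposal.

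A second gap is boundedness of Palais--Smale sequences for $\lambda\ge\lambda_1(f)$. Lemma~\ref{LPS} in Section~3 only shows that \emph{bounded} PS sequences converge; boundedness for $p=4$ and $\lambda\ge\lambda_1(f)$ is a separate argument (Lemma~\ref{BD4}(ii) in the paper): assuming $\|u_n\|\to\infty$, one normalises, passes to a limit $v_0$ satisfying $\|v_0\|=1$, $\lambda\int_\Omega fv_0^2=1$ and $a\|v_0\|^4=\int_\Omega gv_0^4$, and then shows via a further limiting argument as $\lambda\to\lambda_1(f)^+$ that this would force $v_0=\pm\lambda_1(f)^{-1/2}\phi_1$ and hence $a=\lambda_1(f)^{-2}\int_\Omega g\phi_1^4\le 0$, a contradiction. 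Your remark that this ``would be dispatched by the lemmas already established in Section~3'' is not correct.
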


\begin{figure}[htbp]
\hspace*{0.5cm}\includegraphics[scale=0.8]{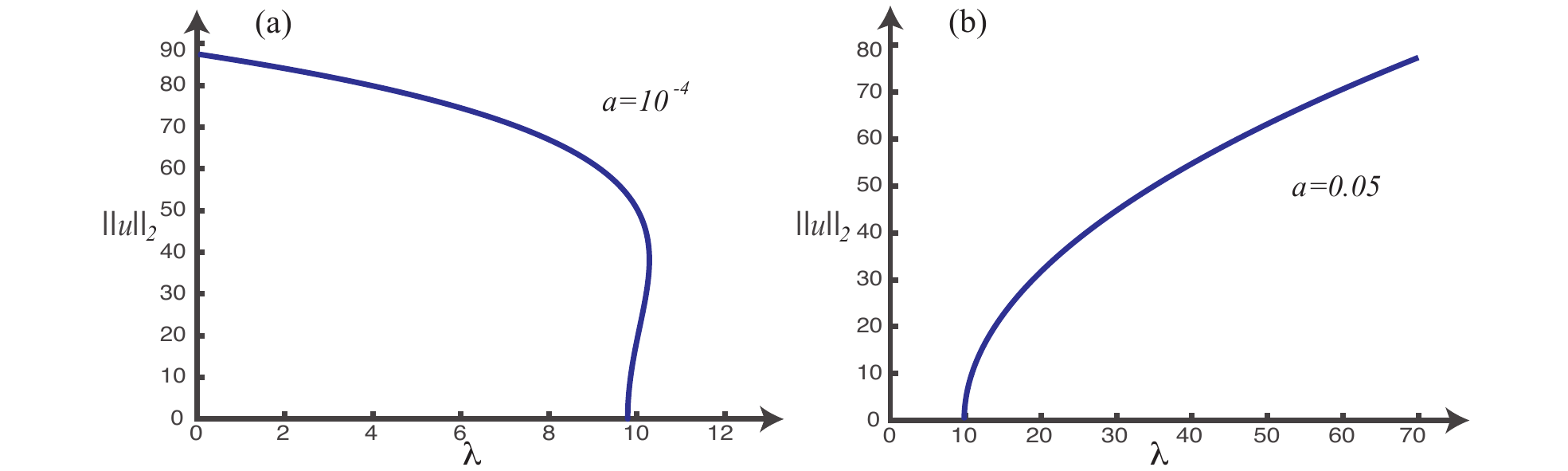}
\caption{Simulations for Theorem \ref{T:p=4-} with $p=4$ and $g(x)=-\sin(1.8\pi x)$ satisfying $\int_\Omega g\phi_1^4 dx\le0$.}
\label{fg:p4-}
\end{figure}

Figure \ref{fg:p4-} depicts the results of Theorem \ref{T:p=4-} for the cases when $\int_\Omega g\phi_1^4dx\leq0$.
In Figure \ref{fg:p4-} (a), we assume the same value of $a$ ($a=10^{-4}$) as in Figure \ref{fg:p4+} (a) to highlight the change in the solution structure as the integral $\int_\Omega g\phi_1^4dx$ changes sign. When $a$ is sufficiently small, two branches of solutions are present here with the lower branch turning left into the upper branch at a point $\lambda>\lambda_1(f)+\delta_0$
depicting the results of Theorem \ref{T:p=4-} $(i$-1) and $(i$-2). Figure \ref{fg:p4-} (b) demonstrates the results of Theorem \ref{T:p=4-} $(ii)$ and $(iii)$ when the values of $a$ exceed a certain threshold value ($\Gamma_0$) exhibiting similar behaviour to that in Figure \ref{fg:p4+} (c) when $\int_\Omega g\phi_1^4dx>0$.

The solution structure in Figure \ref{fg:p4-} (a) is similar to that in Figure \ref{fg:a0} (b) of Equation (\ref{eq:S}) when  $\int_\Omega g\phi_1^4 dx\le0$ provided $a<\Gamma_0$. Increasing the value of $a$ in Figure \ref{fg:p4-} (b) results in a profile that is also observed in Figure \ref{fg:p4+} (c) indicating that when the values of $a$ exceed a certain threshold value, namely, $a\ge\Gamma_0$, the solution behaviour becomes independent on the condition imposed on the integral $\int_\Omega g\phi_1^pdx$ for the case when $p=4$.


We next present the results for $2<p<\min\{4, 2^*\}$ when $\int_\Omega g\phi_1^pdx>0$ in Theorem \ref{T:p<4+} and for brevity, we assume the notations below that for $a>0$,
\begin{equation}\label{lam-a+}
\lambda_a^+=\lambda_1(f)\left(1-(4-p)\left(\frac{\int_\Omega g\phi_1^pdx}{p\lambda_1(f)^{\frac p2}}\right)^{\frac{2}{4-p}}\left(\frac{2p-4}{a}\right)^{\frac{p-2}{4-p}}\right)
\end{equation}
and
$$\Lambda_a^+=\lambda_1(f)\left(1-(4-p)\left(\frac{\|g\|_\infty}{2S_p^p}\right)^{\frac{2}{4-p}}\left(\frac{p-2}{a}\right)^{\frac{p-2}{4-p}}\right)$$
with $S_p$ being the best Sobolev constant for the embedding of $H^1_0(\Omega)$ into $L^p(\Omega)$.
Note that $\Lambda_a^+<\lambda_a^+<\lambda_1(f)$ for all $a>0$, and $\Lambda_a^+>0$ for all $a>(p-2)\left(4-p\right)^{\frac{4-p}{p-2}}\left(\frac12 \|g\|_\infty S_p^{-p}\right)^{\frac{2}{p-2}}$.

\begin{Theorem}\label{T:p<4+}
Suppose that $N\geq1$, $2<p<\min\{4, 2^*\}$ and conditions $(D1)-(D2)$ hold. If $\int_\Omega g\phi_1^pdx>0$, then the following results hold.
\vspace{-6pt}
\begin{enumerate}[(i)]\itemsep-3pt
\item For each $a>0$, Equation $(K_{a,\lambda})$ has two positive solutions $u^+$ and $u^-$ with $J_{a, \lambda}(u^-)<0<J_{a, \lambda}(u^+)$ whenever $\max\{0, \lambda_a^+\}<\lambda<\lambda_1(f)$.
\item For each $a>0$, Equation $(K_{a,\lambda})$ has a positive solution $u^-$ with $J_{a, \lambda}(u^-)<0$ whenever $\lambda\geq\lambda_1(f)$.
\item For each $a>(p-2)\left(4-p\right)^{\frac{4-p}{p-2}}\left(\frac12 \|g\|_\infty S_p^{-p}\right)^{\frac{2}{p-2}}$, Equation $(K_{a,\lambda})$ does not admit nontrivial solution whenever $0<\lambda<\Lambda_a^+$.
\end{enumerate}
\end{Theorem}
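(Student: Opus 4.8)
\medskip
\noindent\textbf{Proof strategy.}\quad The plan is to produce $u^-$ as a global minimizer of a truncated energy, $u^+$ by the mountain pass theorem, and to prove the non-existence statement (iii) by a direct a priori estimate on the equation. Throughout write $\|u\|^2=\int_\Omega|\nabla u|^2\,dx$ and let $J^+_{a,\lambda}$ denote the functional obtained from $J_{a,\lambda}$ by replacing $u^2$ and $|u|^p$ with $u_+^2$ and $u_+^p$, where $u_+=\max\{u,0\}$. Since $p<4$ the quartic term dominates, so $J_{a,\lambda}$ and $J^+_{a,\lambda}$ are coercive, bounded below, $C^1$ and weakly lower semicontinuous on $H^1_0(\Omega)$, and (as set up in Section 3) satisfy the Palais--Smale condition. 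Testing $(J^+_{a,\lambda})'(u)$ against the negative part of $u$ shows every critical point of $J^+_{a,\lambda}$ is nonnegative; standard elliptic regularity and the strong maximum principle then turn a nontrivial such critical point into a positive classical solution of $(K_{a,\lambda})$, and $J^+_{a,\lambda}=J_{a,\lambda}$ on the nonnegative cone.

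For part (i), fix $\max\{0,\lambda_a^+\}<\lambda<\lambda_1(f)$. First I would record the geometry near $0$: by \eqref{R1.2}, $\int_\Omega f u^2\,dx\le\lambda_1(f)^{-1}\|u\|^2$, so $J^+_{a,\lambda}(u)\ge\tfrac12(1-\lambda/\lambda_1(f))\|u\|^2-C\|u\|^p$ with $p>2$, which gives $\rho,\alpha_0>0$ such that $J^+_{a,\lambda}\ge\alpha_0$ on $\|u\|=\rho$ and $J^+_{a,\lambda}\ge0$ on $\|u\|\le\rho$. Next I would show $\inf_{H^1_0(\Omega)}J^+_{a,\lambda}<0$: along the ray $t\phi_1$ (normalised so that $\int_\Omega f\phi_1^2\,dx=1$, $\|\phi_1\|^2=\lambda_1(f)$),
$$J_{a,\lambda}(t\phi_1)=t^2\Big(\tfrac a4\lambda_1(f)^2t^2+\tfrac12(\lambda_1(f)-\lambda)-\tfrac1p\Big(\int_\Omega g\phi_1^p\,dx\Big)t^{p-2}\Big),$$
and since $p<4$ the bracket attains its minimum over $t>0$ at a unique point whose value, after simplification, is negative exactly when $\lambda>\lambda_a^+$. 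Hence the global minimizer $u^-$ of $J^+_{a,\lambda}$ is nontrivial, positive, has $J_{a,\lambda}(u^-)<0$, and $\|u^-\|>\rho$. Finally the mountain pass level $c$ over paths joining $0$ to $u^-$ satisfies $c\ge\alpha_0>0$, and $(PS)_c$ produces a critical point $u^+$ of $J^+_{a,\lambda}$ with $J_{a,\lambda}(u^+)=c>0>J_{a,\lambda}(u^-)$: a second positive solution.

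For part (ii), when $\lambda\ge\lambda_1(f)$ the term $\tfrac12(\lambda_1(f)-\lambda)t^2$ in the display above is $\le0$ while $-\tfrac1p(\int_\Omega g\phi_1^p\,dx)t^p<0$ dominates the quartic term for small $t$ (again $p<4$), so $\inf J^+_{a,\lambda}<0$ and its global minimizer is a positive solution $u^-$ with $J_{a,\lambda}(u^-)<0$. For part (iii), I would take any nontrivial solution $u$ and test $(K_{a,\lambda})$ against $u$, obtaining $a\|u\|^4+\|u\|^2=\lambda\int_\Omega f u^2\,dx+\int_\Omega g|u|^p\,dx\le\lambda_1(f)^{-1}\lambda\|u\|^2+\|g\|_\infty S_p^{-p}\|u\|^p$; dividing by $\|u\|^2>0$ and minimizing $at^2-\|g\|_\infty S_p^{-p}t^{p-2}$ over $t=\|u\|>0$ (unique minimizer since $p<4$) forces $\lambda\ge\Lambda_a^+$ after simplification, so $(K_{a,\lambda})$ has no nontrivial solution for $0<\lambda<\Lambda_a^+$.

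The hard part will be the two explicit optimisation-and-simplification steps: verifying that $\min_{t>0}J_{a,\lambda}(t\phi_1)<0\Leftrightarrow\lambda>\lambda_a^+$ in (i), and that the a priori bound in (iii) is precisely $\lambda\ge\Lambda_a^+$ — both amount to carefully bookkeeping the exponents $\tfrac{2}{4-p}$ and $\tfrac{p-2}{4-p}$ (using $\tfrac{2}{4-p}=\tfrac{p-2}{4-p}+1$). Everything else — coercivity, the local geometry at $0$, importing $(PS)$ from Section 3, and positivity via truncation and the maximum principle — is routine.
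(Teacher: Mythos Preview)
Your proposal is correct and follows essentially the same route as the paper: mountain pass for $u^+$ after verifying the geometry via the ray $t\phi_1$ (the paper's $t_0$ in Lemma~\ref{MP:p<4+} is exactly your minimizer of the bracket, and $J_{a,\lambda}(t_0\phi_1)=\tfrac{t_0^2}{2}(\lambda_a^+-\lambda)$), coercivity/minimization for $u^-$, and the Nehari identity $\langle J_{a,\lambda}'(u),u\rangle=0$ plus the one–variable optimisation for the non-existence in (iii). The only cosmetic differences are that the paper handles positivity via the evenness $J_{a,\lambda}(u)=J_{a,\lambda}(|u|)$ rather than truncation, and obtains $u^-$ in part (i) by Ekeland on $\{\|u\|\ge\rho_{a,\lambda}^+\}$ rather than by direct global minimization.
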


\begin{figure}[htbp]
\hspace*{-1.8cm}\includegraphics[scale=0.72]{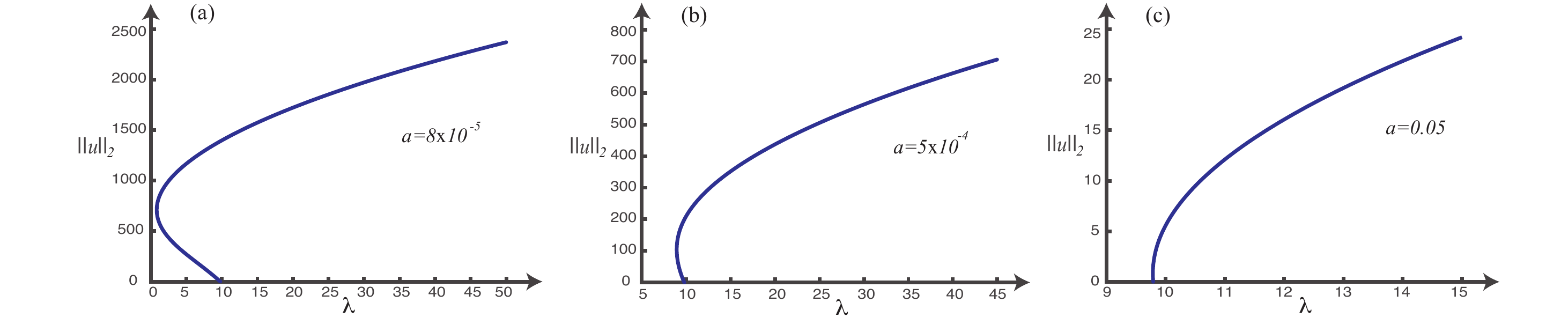}
\caption{Simulations for Theorem \ref{T:p<4+} satisfying $\int_\Omega g\phi_1^pdx>0$ with $p=3$ and $g(x)=-\sin (3\pi x)$. }
\label{fg:p3+}
\end{figure}

Examples of Theorem \ref{T:p<4+} satisfying $\int_\Omega g\phi_1^pdx>0$ in this regime of $p$ are presented in Figure \ref{fg:p3+} for three distinct values of $a$. We set $p=3$ and all three examples exhibit similar solution structure with two branches of positive solutions. The lower branch bifurcates to the left from the zero solutions at $\lambda_1(f)$ turning into the upper branch at a point $\lambda\le\max\{0, \lambda_a^+\}$ giving the results described by Theorem \ref{T:p<4+} $(i)$ and $(ii)$ that two positive solutions whenever $\max\{0, \lambda_a^+\}<\lambda<\lambda_1(f)$ and one positive solution whenever $\lambda\ge\lambda_1(f)$. Note that $\lambda_a^+\le 0$ when $0<a\ll 1$ but  approaches $\lambda_1(f)$ from the left with increasing values of $a$. Consequently, the turning point can be seen to edge closer to the bifurcation point $\lambda_1(f)$ in Figure \ref{fg:p3+} from (a) to (c) as a result of increasing $a$.

Note that the multiplicity of solutions here appears on the left hand side of the bifurcation point $\lambda_1(f)$ with the turning point approaching $\lambda=0$ for very small value of $a$ and on the right at least one positive solution is permitted for $\lambda>\lambda_1(f)$. Thus, the existence of positive solutions is always guaranteed for $\lambda>0$ provided $a$ is sufficiently small.  As $a$ increases, a region with no permissible positive solution begins to appear in a right neighbourhood of $\lambda=0$ and this neighbourhood becomes larger as the non-local effect becomes more pronounced. Nevertheless, multiple positive solutions are always present near $\lambda_1(f)$ for $\lambda<\lambda_1(f)$ unaffected by the values of $a$, however large it might be.


In the next two Theorems, we present the results for $2<p<\min\{4,2^*\}$ when $\int_\Omega g\phi_1^pdx<0$ and consider the following:
\begin{equation}\label{Ga-p}
\Gamma_p:=\sup\left\{\frac{\int_\Omega g|u|^pdx}{\left(\int_\Omega |\nabla u|^2dx\right)^{\frac p2}} : u\in H^1_0(\Omega)\setminus\{0\}, \int_\Omega fu^2dx\geq0\right\}.
\end{equation}
Under conditions $(D1)-(D2),$ we can choose a function $\varphi\in H^1_0(\Omega)$ such that $\int_\Omega f\varphi^2dx>0$ and $\int_\Omega g|\varphi|^pdx>0$ (see Proposition 6.2 in \cite{CC} for more details). Hence $\Gamma_p>0$, and it is easy to deduce that $\Gamma_p\leq\|g\|_\infty S_p^{-p}$ by Sobolev inequality. We also denote here by $a_0(p)$ a threshold value of $a$ which is dependent on the value of $p$ and is given by
\begin{equation}\label{a0}
a_0(p)=(2p-4)\left(4-p\right)^{\frac{4-p}{p-2}}\left(\frac{\Gamma_p}{p}\right)^{\frac{2}{p-2}}.
\end{equation}
In addition, for brevity, we assume the notation below that for $a>0$,
$$\Lambda_a^-=\lambda_2(f)\left(1-(4-p)\left(\frac{\|g\|_\infty}{S_p^p}\right)^{\frac{2}{4-p}}\left(\frac{2^{p-1}(p-1)\|g\|_\infty\|\phi_1\|_p^p}{\left|\int_\Omega g\phi_1dx\right|}\right)^{\frac{2p-2}{4-p}}\left(\frac{4p-8}{a}\right)^{\frac{p-2}{4-p}}\right).$$
Note that $\Lambda_a^->0$ for all
$$a>a^*(p):=(4p-8)\left(4-p\right)^{\frac{4-p}{p-2}}\left(\frac{\|g\|_\infty}{S_p^p}\right)^{\frac{2}{p-2}}\left(\frac{2^{p-1}(p-1)\|g\|_\infty\|\phi_1\|_p^p}{\left|\int_\Omega g\phi_1dx\right|}\right)^{\frac{2p-2}{p-2}}$$
and $a_0(p)<a^*(p)$.

\begin{Theorem}\label{T:p<4-1}
Suppose that $N\geq1$, $2<p<\min\{4, 2^*\}$ and conditions $(D1)-(D2)$ hold. If $\int_\Omega g\phi_1^pdx<0$, then we have the following results.
\vspace{-6pt}
\begin{enumerate}[(i)]\itemsep-3pt
\item For each $0<a<a_0(p)$,
\vspace{-6pt}
\begin{itemize}\itemsep-3pt
\item[(i-1)] Equation $(K_{a,\lambda})$ has two positive solutions $u^+$ and $u^-$ with $J_{a, \lambda}(u^-)<0<J_{a, \lambda}(u^+)$ whenever $0<\lambda\leq\lambda_1(f)$;
\item[(i-2)] there exists $\overline{\delta}_a>0$ such that Equation $(K_{a,\lambda})$ has three positive solutions $u^+$, $u^-_1$ and $u^-_2$ with $J_{a, \lambda}(u^-_1), J_{a, \lambda}(u^-_2)<0<J_{a, \lambda}(u^+)$ whenever $\lambda_1(f)<\lambda<\lambda_1(f)+\overline{\delta}_a$.
\end{itemize}
\vspace{-6pt}
\item For each $a>0$, Equation $(K_{a,\lambda})$ has a positive solution $u^-$ with $J_{a, \lambda}(u^-)<0$ whenever $\lambda>\lambda_1(f)$.
\item For each $a>a^*(p)$, Equation $(K_{a,\lambda})$ does not admit nontrivial solution whenever $0<\lambda\leq\min\{\lambda_1(f), \Lambda_a^-\}$.
\end{enumerate}
\end{Theorem}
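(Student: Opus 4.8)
To establish Theorem~\ref{T:p<4-1}, the plan is to run a variational argument built on the fact that for $2<p<\min\{4,2^*\}$ the quartic term $\frac a4(\int_\Omega|\nabla u|^2dx)^2$ dominates, so that $J_{a,\lambda}$ --- and its truncation $\tilde J_{a,\lambda}$, in which $fu^2$ and $g|u|^p$ are replaced by $f(u^+)^2$ and $g(u^+)^p$ --- is coercive and weakly sequentially lower semicontinuous on $H^1_0(\Omega)$; hence minima over weakly closed sets are attained. Any nontrivial critical point of $\tilde J_{a,\lambda}$ is nonnegative (test with $\min\{u,0\}$) and, by the strong maximum principle applied to $-\Delta v=(a\|v\|^2+1)^{-1}(\lambda f+g v^{p-2})v$ (whose potential lies in $L^q$ with $q>N/2$ since $p<2^*$), strictly positive in $\Omega$ and a solution of $(K_{a,\lambda})$; I shall therefore work with $\tilde J_{a,\lambda}$. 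Throughout $\|u\|^2=\int_\Omega|\nabla u|^2dx$ and $\int_\Omega f\phi_1^2dx=1$, so that $\|\phi_1\|^2=\lambda_1(f)$ by \eqref{R1.2}, and the Palais--Smale condition and the Mountain Pass Theorem are taken from Section 3. The organising device is the fibering map $t\mapsto\tilde J_{a,\lambda}(tu)$, whose derivative factors as $t\big[\,at^2\|u\|^4+(\|u\|^2-\lambda\int_\Omega f(u^+)^2dx)-t^{p-2}\int_\Omega g(u^+)^pdx\,\big]$; since $p-2<2$, when $\int_\Omega g(u^+)^pdx>0$ the bracket first decreases and then increases to $+\infty$, so it has at most two positive zeros, the first a local maximum of the fibre (hence positive energy) and the second a local minimum, and the count is controlled by the sizes of $\|u\|^2-\lambda\int_\Omega f(u^+)^2dx$ and $\int_\Omega g(u^+)^pdx$ relative to $a$ --- the source of the thresholds $a_0(p)$ of \eqref{a0} and $a^*(p)$.

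\emph{Part (ii)} is immediate: for $\lambda>\lambda_1(f)$ the fibre $t\mapsto\tilde J_{a,\lambda}(t\phi_1)=\frac a4\lambda_1(f)^2t^4+\frac12(\lambda_1(f)-\lambda)t^2+\frac1p\big|\int_\Omega g\phi_1^pdx\big|\,t^p$ is negative for small $t>0$, so $\inf_{H^1_0(\Omega)}\tilde J_{a,\lambda}<0$ and its global minimiser is the required $u^-$. For \emph{part (i-1)} with $0<a<a_0(p)$, pick via \eqref{Ga-p} a function $\varphi\ge0$ with $\int_\Omega f\varphi^2dx\ge0$ and $\int_\Omega g\varphi^pdx/\|\varphi\|^p$ close to $\Gamma_p$; minimising $t\mapsto\frac a4t^4\|\varphi\|^4+\frac12t^2\|\varphi\|^2-\frac1pt^p\int_\Omega g\varphi^pdx$ over $t>0$ (after discarding the nonpositive $-\frac\lambda2\int_\Omega f\varphi^2dx$) shows that $a<a_0(p)$, with $a_0(p)$ as in \eqref{a0}, is precisely the condition under which this minimum can be made negative, for every $\lambda\in(0,\lambda_1(f)]$; the global minimiser of $\tilde J_{a,\lambda}$ is then the solution $u^-$. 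Since $\|u\|^2-\lambda\int_\Omega fu^2dx\ge0$ for $\lambda\le\lambda_1(f)$, an expansion of $\int_\Omega g|u|^pdx$ near the direction $\phi_1$ (using $\int_\Omega g\phi_1^pdx<0$) shows $0$ is a strict local minimum with $0=\tilde J_{a,\lambda}(0)<\inf_{\|u\|=r}\tilde J_{a,\lambda}$ for small $r$, even at $\lambda=\lambda_1(f)$; the Mountain Pass Theorem then produces $u^+$ with $\tilde J_{a,\lambda}(u^+)>0$, distinct from $u^-$ by the sign of the energy.

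\emph{Part (i-2)} is the core. Fix $0<a<a_0(p)$. The global minimiser $u^-_2$ of $\tilde J_{a,\lambda}$ satisfies $\tilde J_{a,\lambda}(u^-_2)\le\inf\tilde J_{a,\lambda_1(f)}<0$, bounded away from $0$ uniformly for $\lambda$ near $\lambda_1(f)$; combining this with the fact (from the strict local minimum at $\lambda_1(f)$ plus a continuity estimate in $\lambda$) that $\tilde J_{a,\lambda}(u)>\inf\tilde J_{a,\lambda}$ whenever $\|u\|<r_0$, for some $r_0>0$ independent of $\lambda$, one gets $\|u^-_2\|\ge r_0$. On the other hand, for $\lambda>\lambda_1(f)$ the $\phi_1$-fibre has a negative pocket whose minimiser has norm $\to0$ as $\lambda\to\lambda_1(f)^+$; minimising $\tilde J_{a,\lambda}$ over a suitably small region around the ray $\{t\phi_1:t>0\}$ --- one on which $\|u\|^2-\lambda\int_\Omega fu^2dx$ stays negative and whose relative boundary carries positive energy once $\lambda$ is close enough to $\lambda_1(f)$ --- gives an interior minimiser $u^-_1$, a critical point with $\tilde J_{a,\lambda}(u^-_1)<0$ and $\|u^-_1\|\to0$; hence $\|u^-_1\|<r_0\le\|u^-_2\|$, so $u^-_1\neq u^-_2$, for $\lambda_1(f)<\lambda<\lambda_1(f)+\overline\delta_a$. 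Finally, $u^-_1$ and $u^-_2$ are separated by a set on which $\tilde J_{a,\lambda}>0$, so the Mountain Pass Theorem (equivalently, minimisation over the branch of fibre-maxima) yields $u^+$ with $\tilde J_{a,\lambda}(u^+)>0$; norms and energies make $u^-_1,u^-_2,u^+$ pairwise distinct.

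For \emph{part (iii)} I argue by contradiction: if $a>a^*(p)$, $0<\lambda\le\min\{\lambda_1(f),\Lambda_a^-\}$ and $u\not\equiv0$ solves $(K_{a,\lambda})$, then testing with $u$ and using $\|u\|^2-\lambda\int_\Omega fu^2dx\ge0$ gives $a\|u\|^4\le\int_\Omega g|u|^pdx\le\|g\|_\infty S_p^{-p}\|u\|^p$, i.e.\ the upper bound $\|u\|^{4-p}\le\|g\|_\infty(aS_p^p)^{-1}$ (and $\int_\Omega g|u|^pdx>0$). Writing $u=s\phi_1+w$ with $\int_\Omega\nabla\phi_1\nabla wdx=0$ (so $\int_\Omega f\phi_1 wdx=0$ and, by \eqref{R1.3}, $\|w\|^2\ge\lambda_2(f)\int_\Omega fw^2dx$ when $\int_\Omega fw^2dx\ge0$), testing with $\phi_1$ and using a convexity estimate for $|s\phi_1+w|^{p-2}(s\phi_1+w)$ together with H\"older's inequality controls the $\phi_1$-component $s$; feeding this into $\|u\|^2-\lambda\int_\Omega fu^2dx=\int_\Omega g|u|^pdx-a\|u\|^4\le\|g\|_\infty S_p^{-p}\|u\|^p$ produces a lower bound on $\|u\|$ that, for $a>a^*(p)$ and $\lambda\le\Lambda_a^-$, contradicts the upper bound, forcing $u\equiv0$. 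I expect part (i-2) to be the main obstacle: the small bifurcating minimiser $u^-_1$ must be constructed on a region whose relative boundary carries positive energy (a naive small ball fails, since in the $\varphi$-direction $\tilde J_{a,\lambda}$ already goes negative at small norm), $\overline\delta_a$ must be calibrated against $r_0$ so that $u^-_1$, $u^-_2$ and $u^+$ are genuinely distinct, and one must verify that all three survive the truncation; the constant bookkeeping behind $a^*(p)$ and $\Lambda_a^-$ in part (iii) is the secondary difficulty.
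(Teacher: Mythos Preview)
Your overall scheme matches the paper's --- coercivity from $p<4$ gives boundedness of (PS)-sequences (this is Lemma~\ref{BD2-4}), and the solutions come from a mountain pass plus minimisation(s). The truncation $\tilde J_{a,\lambda}$ is a harmless variant of the paper's use of $J_{a,\lambda}(u)=J_{a,\lambda}(|u|)$. The real divergences are in parts (i) and (iii).

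For (i), and especially (i-2), you correctly flag the obstacle but do not resolve it, and your tentative region --- a neighbourhood of the ray $\mathbb R^+\phi_1$ on which $\|u\|^2-\lambda\int_\Omega fu^2dx<0$ --- is a cone with no useful boundary. In fact the paper's device is precisely the ``naive small ball'' you reject: Lemma~\ref{MP:p<4-1} produces an explicit radius $\overline\rho_{a,\lambda}$ (see \eqref{J21}) with $\inf_{\|u\|=\overline\rho_{a,\lambda}}J_{a,\lambda}>0$ for every $0<\lambda<\lambda_1(f)+\overline\delta_a$. The mechanism is the orthogonal splitting $u=t\phi_1+w$: one expands $\int_\Omega g|t\phi_1+w|^p-\int_\Omega g|t\phi_1|^p$ via the mean value theorem and Young's inequality (exactly \eqref{g1}--\eqref{g2}), and the hypothesis $\int_\Omega g\phi_1^p<0$ turns the $\phi_1$-piece into a \emph{positive} $|t|^p$ contribution, while the $w$-piece is controlled by the spectral gap $\lambda_2(f)>\lambda_1(f)$. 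Your worry that the $\Gamma_p$-maximising direction $\phi_g$ goes negative at small norm is misplaced: since $\int_\Omega g\phi_1^p<0$ forces $\phi_g\neq\pm\phi_1$, and the radius $\overline\rho_{a,\lambda}\le\rho_{1,a}=((p-2)\Gamma_p/ap)^{1/(4-p)}$ is chosen so that the negative pocket on the $\phi_g$-fibre (at $\|t_a\phi_g\|=((2p-4)\Gamma_p/ap)^{1/(4-p)}>\rho_{1,a}$) sits \emph{outside} the sphere. With this single sphere in hand, the three solutions of (i-2) are simply $\inf_{\|u\|\ge\overline\rho_{a,\lambda}}J_{a,\lambda}$, $\inf_{\|u\|\le\overline\rho_{a,\lambda}}J_{a,\lambda}$, and the mountain pass across the sphere; no delicate region around the $\phi_1$-ray is needed, and $\overline\delta_a$ falls out of the same estimate \eqref{J20}.

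For (iii), your route (test with $\phi_1$, then play an upper bound on $\|u\|$ against a lower bound) is different and looks harder to make precise. The paper instead shows directly that $I_p(u):=\langle J'_{a,\lambda}(u),u\rangle>0$ for all $u\ne0$: with the same splitting $u=t\phi_1+w$ and the same mean-value/Young estimate, $-\int_\Omega g|u|^p$ is bounded below by $-C\|w\|^p$ (the $|t|^p$ contribution is positive because $\int_\Omega g\phi_1^p<0$), and then $C\|w\|^p$ is absorbed into $(1-\lambda/\lambda_2(f))\|w\|^2+a\|u\|^4$ by Young's inequality. The constants $a^*(p)$ and $\Lambda_a^-$ are exactly what make this absorption work; no separate test with $\phi_1$ is used.
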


In the case when $a\geq a_0(p)$, it is necessary to consider the following:
\begin{equation}\label{lam:a-}
\lambda_a^-:=\inf_{u\in S} \frac{\int_\Omega |\nabla u|^2dx}{\int_\Omega fu^2dx}\left(1-(4-p)\left(\frac{\int_\Omega g|u|^pdx}{p\left(\int_\Omega |\nabla u|^2dx\right)^{\frac p2}}\right)^{\frac{2}{4-p}}\left(\frac{2p-4}{a}\right)^{\frac{p-2}{4-p}}\right)\quad\text{for $a\geq a_0(p)$},
\end{equation}
where
\begin{equation}\label{S}
S=\left\{u\in H^1_0(\Omega) : \int_\Omega fu^2dx>0, \int_\Omega g|u|^pdx>0\right\}.
\end{equation}
The function $\lambda_a^-$ is nonnegative, continuous and increasing, and we conclude that
$$\lim_{a\to\infty}\lambda_a^->\lambda_1(f)$$
when $\int_\Omega g\phi_1^pdx<0$ (see Proposition \ref{P:lam:a-} in Section 3).
In particular, if $f$ is nonnegative, we can deduce that
$$\lambda_{a_0(p)}^-=0$$
(see Proposition \ref{P:lam:a2} in Section 3). Thus, when $\int_\Omega g\phi_1^pdx<0$ and $f$ is nonnegative, there is a number ${\bf A}>a_0(p)$ such that $0\leq\lambda_a^-<\lambda_1(f)$ for $a_0(p)\leq a<{\bf A}$.

\begin{Theorem}\label{T:p<4-2}
Suppose that $N\geq1$, $2<p<\min\{4, 2^*\}$, conditions $(D1)-(D2)$ hold and $\int_\Omega g\phi_1^pdx<0$. In addition, the following condition is assumed:\begin{itemize}
\item[$(D3)$] $\left|\{x\in\Omega : f(x)<0\}\right|=0$.
\end{itemize}
Then for each $a_0(p)\leq a<{\bf A}$,
\vspace{-6pt}
\begin{enumerate}[(i)]\itemsep-3pt
\item  Equation $(K_{a,\lambda})$ has two positive solutions $u^+$ and $u^-$ with $J_{a, \lambda}(u^-)<0<J_{a, \lambda}(u^+)$ whenever $\lambda_a^-<\lambda\leq\lambda_1(f)$;
\item there exists $\widehat{\delta}_a>0$ such that Equation $(K_{a,\lambda})$ has three positive solutions $u^+$, $u^-_1$ and $u^-_2$ with $J_{a, \lambda}(u^-_1), J_{a, \lambda}(u^-_2)<0<J_{a, \lambda}(u^+)$ whenever $\lambda_1(f)<\lambda<\lambda_1(f)+\widehat{\delta}_a$.
\end{enumerate}
\end{Theorem}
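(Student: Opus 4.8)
\medskip
\noindent\textbf{Proof plan.}
The plan is to combine a global minimisation, the fibering analysis underlying the definition \eqref{lam:a-} of $\lambda_a^-$, and two applications of the Mountain Pass Theorem. As a preliminary, it suffices to produce nonnegative critical points (which are then positive in $\Omega$ by the strong maximum principle), so I would work throughout with the functional obtained from $J_{a,\lambda}$ by the truncation $u\mapsto\max\{u,0\}$ in its last two terms: it coincides with $J_{a,\lambda}$ on the cone $\{u\ge0\}$, is still $C^{1}$, and its nontrivial critical points are nonnegative; below I keep writing $J_{a,\lambda}$. Since $2<p<\min\{4,2^{*}\}$ the quartic term dominates at infinity, so this functional is coercive and weakly lower semicontinuous on $H^{1}_{0}(\Omega)$ — hence attains its infimum — and, every $(PS)$ sequence being bounded and, by $H^{1}_{0}(\Omega)\hookrightarrow L^{p}(\Omega)$, relatively compact, it satisfies $(PS)_{c}$ at every level (this is the content of Section~3, where the mountain pass geometry is also discussed). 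The fact about $\lambda_a^-$ that I would use is: for $u\in S$ (see \eqref{S}) the fibering map $t\mapsto J_{a,\lambda}(tu)$, being for $2<p<4$ a sum of a positive quartic, a $p$-th power and a quadratic term, dips below $0$ — at a point where $\langle J'_{a,\lambda}(\cdot),\cdot\rangle=0$ and $\tfrac{d^{2}}{dt^{2}}J_{a,\lambda}(t\,\cdot)|_{t=1}>0$ — exactly when $\lambda$ exceeds the quantity inside the infimum in \eqref{lam:a-}, as one reads off from the tangency system $J_{a,\lambda}(tu)=\tfrac{d}{dt}J_{a,\lambda}(tu)=0$. Thus $\lambda>\lambda_a^-$ forces $\inf_{H^{1}_{0}(\Omega)}J_{a,\lambda}<0$; and since $a<{\bf A}$ one has $\lambda_a^-<\lambda_1(f)$ (the discussion preceding the theorem, via Propositions \ref{P:lam:a-} and \ref{P:lam:a2}), so I would fix once and for all a nonnegative $v_{0}\in S$ with $J_{a,\lambda_1(f)}(v_{0})=:-2\kappa<0$.

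\medskip
\noindent\textbf{Part (i).}
Let $\lambda_a^-<\lambda\le\lambda_1(f)$. A global minimiser $u^{-}$ satisfies $J_{a,\lambda}(u^{-})<0=J_{a,\lambda}(0)$ by the preceding paragraph, so it is a positive solution with $J_{a,\lambda}(u^{-})<0$. To get $u^{+}$ I would set up a mountain pass over a ball around the origin. For $\lambda<\lambda_1(f)$ this is immediate: $J_{a,\lambda}(u)\ge\tfrac14\big(1-\lambda/\lambda_1(f)\big)\int_\Omega|\nabla u|^{2}dx>0$ for $0<\int_\Omega|\nabla u|^{2}dx\le\rho^{2}$ with $\rho$ small, by \eqref{R1.2}, the Sobolev inequality and $p>2$. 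For the endpoint $\lambda=\lambda_1(f)$, where the quadratic part degenerates along $\mathbb{R}\phi_{1}$, I would write $u=s\phi_{1}+w$ with $\int_\Omega\nabla w\nabla\phi_{1}dx=0$ and combine \eqref{R1.3} (which gives $\int_\Omega|\nabla w|^{2}dx-\lambda_1(f)\int_\Omega fw^{2}dx\ge\big(1-\lambda_1(f)/\lambda_2(f)\big)\int_\Omega|\nabla w|^{2}dx$), the \emph{strict} sign $\int_\Omega g\phi_{1}^{p}dx<0$, condition $(D3)$ (through $\int_\Omega f(u_+)^{2}dx\le\int_\Omega fu^{2}dx=s^{2}+\int_\Omega fw^{2}dx$) and Young's inequality on the cross terms of $\int_\Omega g|s\phi_{1}+w|^{p}dx$ to obtain $J_{a,\lambda_1(f)}\ge0$ on $\overline{B_{\rho}}$ and $\ge\alpha>0$ on $\partial B_{\rho}$ for small $\rho$. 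In either case $u^{-}\notin\overline{B_{\rho}}$ (since $J_{a,\lambda}(u^{-})<0$), so the Mountain Pass Theorem and $(PS)$ along paths from $0$ to $u^{-}$ give a positive solution $u^{+}$ with $J_{a,\lambda}(u^{+})\ge\alpha>0>J_{a,\lambda}(u^{-})$; in particular $u^{+}\ne u^{-}$.

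\medskip
\noindent\textbf{Part (ii).}
Let $\lambda>\lambda_1(f)$. Along $\phi_{1}$, $J_{a,\lambda}(t\phi_{1})=\tfrac{a\lambda_1(f)^{2}}{4}t^{4}+\tfrac{\lambda_1(f)-\lambda}{2}t^{2}+\tfrac1p\big|\int_\Omega g\phi_{1}^{p}dx\big|\,t^{p}$ is negative for small $t>0$, and its minimum over $t>0$ is attained at some $t_{\lambda}$ with $\int_\Omega|\nabla(t_{\lambda}\phi_{1})|^{2}dx\to0$ as $\lambda\to\lambda_1(f)^{+}$. Using \eqref{R1.2} one has $J_{a,\lambda}(u)\ge\tfrac a4r^{4}-\tfrac{\|g\|_{\infty}}{pS_{p}^{p}}r^{p}-\tfrac{\lambda-\lambda_1(f)}{2\lambda_1(f)}r^{2}$ with $r:=(\int_\Omega|\nabla u|^{2}dx)^{1/2}$. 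I would fix $\rho_{0}>0$ small enough that $\min_{0\le r\le\rho_{0}}\big(\tfrac a4r^{4}-\tfrac{\|g\|_{\infty}}{pS_{p}^{p}}r^{p}\big)>-\kappa$ \emph{and} that the $\phi_{1}$-decomposition of Part~(i) gives $J_{a,\lambda_1(f)}\ge\alpha_{0}>0$ on $\partial B_{\rho_{0}}$; then for all $\lambda$ in a suitable right neighbourhood of $\lambda_1(f)$ one gets $J_{a,\lambda}\ge\tfrac12\alpha_{0}>0$ on $\partial B_{\rho_{0}}$, $\inf_{\overline{B_{\rho_{0}}}}J_{a,\lambda}>-\kappa$, and $J_{a,\lambda}(t_{\lambda}\phi_{1})<0$ with $t_{\lambda}\phi_{1}\in B_{\rho_{0}}$. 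Hence $m_{1}(\lambda):=\inf_{\overline{B_{\rho_{0}}}}J_{a,\lambda}$ lies in $(-\kappa,0)$ and is attained at an \emph{interior} point $u^{-}_{1}$ — a positive solution with $J_{a,\lambda}(u^{-}_{1})=m_{1}(\lambda)<0$. On the other hand $\inf_{H^{1}_{0}(\Omega)}J_{a,\lambda}\le J_{a,\lambda}(v_{0})=J_{a,\lambda_1(f)}(v_{0})-\tfrac{\lambda-\lambda_1(f)}{2}\int_\Omega fv_{0}^{2}dx\le-2\kappa$ (using $\int_\Omega fv_{0}^{2}dx>0$ since $v_{0}\in S$), so the global minimiser $u^{-}_{2}$ obeys $J_{a,\lambda}(u^{-}_{2})\le-2\kappa<m_{1}(\lambda)=J_{a,\lambda}(u^{-}_{1})$; thus $u^{-}_{2}\notin\overline{B_{\rho_{0}}}$, $u^{-}_{2}$ is a positive solution with $J_{a,\lambda}(u^{-}_{2})<0$, and $u^{-}_{2}\ne u^{-}_{1}$. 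Finally, since $J_{a,\lambda}\ge\tfrac12\alpha_{0}>0=J_{a,\lambda}(0)$ on $\partial B_{\rho_{0}}$ while $J_{a,\lambda}(u^{-}_{2})<0$, the Mountain Pass Theorem and $(PS)$ along paths from $0$ to $u^{-}_{2}$ give a positive solution $u^{+}$ with $J_{a,\lambda}(u^{+})\ge\tfrac12\alpha_{0}>0$, distinct from $u^{-}_{1}$ and $u^{-}_{2}$. I would then take $\widehat{\delta}_{a}$ to be the width of the largest right neighbourhood of $\lambda_1(f)$ on which all the above smallness requirements hold.

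\medskip
\noindent\textbf{Expected main difficulty.}
The hard part will be the separation estimates in Part~(ii): producing a single \emph{fixed} radius $\rho_{0}$ for which, uniformly in $\lambda$ close to $\lambda_1(f)$, both $J_{a,\lambda}>0$ on $\partial B_{\rho_{0}}$ and $\inf_{\overline{B_{\rho_{0}}}}J_{a,\lambda}>-\kappa$ hold. The first requires $\int_\Omega g\phi_{1}^{p}dx<0$ exploited through the $\phi_{1}$-orthogonal splitting (without it $J_{a,\lambda}$ is negative on arbitrarily small spheres, i.e.\ $\phi_{1}$ is a ridge rather than a well) together with $(D3)$; the second keeps the shallow well at the $\phi_{1}$-direction strictly above the global infimum, which is itself held $\le-2\kappa$ only because $\lambda_a^-<\lambda_1(f)$ (equivalently $a<{\bf A}$) supplies the fixed test direction $v_{0}\in S$. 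Everything else — coercivity, $(PS)$, the fibering description behind $\lambda_a^-$, and the passage to positive solutions — is routine once these estimates are in hand.
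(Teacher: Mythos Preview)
Your proposal is correct and follows essentially the same route as the paper: the paper's proof simply invokes Lemma~\ref{MP:p<4-2} (the mountain pass geometry via the $\phi_1$-orthogonal splitting, exploiting $\int_\Omega g\phi_1^p\,dx<0$) together with Lemmas~\ref{LPS} and~\ref{BD2-4} and then repeats verbatim the minimisation argument of Theorem~\ref{T:p<4-1}(i), obtaining $u^+$ by the Mountain Pass Theorem, $u_1^-$ by minimising over $\{\|u\|\ge\widehat\rho_{a,\lambda}\}$, and (for $\lambda>\lambda_1(f)$) $u_2^-$ by minimising over $\overline B_{\widehat\rho_{a,\lambda}}$, the two being separated because $J_{a,\lambda}>0$ on $\partial B_{\widehat\rho_{a,\lambda}}$ while both have negative energy. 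Your quantitative energy-level separation ($m_1(\lambda)>-\kappa$ versus $\inf J_{a,\lambda}\le -2\kappa$) is a slight embellishment but not needed: once you have positivity on the sphere, the inner and outer minimisers are automatically in the open interior and open exterior respectively, hence distinct; likewise your use of $(D3)$ in the sphere estimate is an artefact of the truncation, whereas the paper works with the untruncated even functional and uses $(D3)$ only through Proposition~\ref{P:lam:a2}.
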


\begin{figure}[htbp]
\hspace*{-1cm}\includegraphics[scale=0.72]{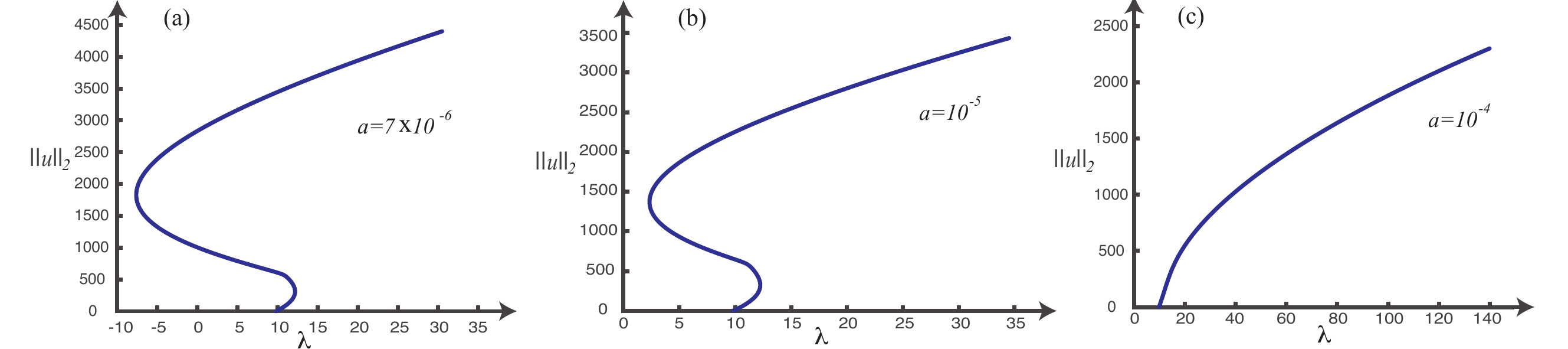}
\caption{Simulations for Theorems \ref{T:p<4-1} and \ref{T:p<4-2} satisfying  $\int_\Omega g\phi_1^pdx<0$; $p=3$ and $g(x)=\sin(3\pi x)$. }
\label{fg:p3-}
\end{figure}

The results of Theorems \ref{T:p<4-1}  and \ref{T:p<4-2} when $\int_\Omega g\phi_1^pdx<0$ are illustrated in Figure \ref{fg:p3-} where we have set $p=3\,$ as an example with the value of $a$ increases from (a) to (c). Similar solution profiles are observed in (a) and (b) with regions of $\lambda$ permitting either one, two or three positive solutions. For the small value of $a$ assumed in (a), the turning from the middle to the upper solution branch occurs in the region $\lambda<0$, thus depicting the case of Theorem  \ref{T:p<4-1} $(i$-1) for which two positive solutions are present for $0<\lambda<\lambda_1(f)$. With a slightly increased value of $a$ in (b), this turning point is shifted towards the right and now occurs in the region $0<\lambda<\lambda_a^-$ demonstrating the result of Theorem \ref{T:p<4-2} $(i)$ permitting two positive solutions whenever  $\lambda_a^-<\lambda\leq\lambda_1(f)$. As the values of $a$ increases further, this turning point continues moving to the right until it vanishes in (c) leaving a single branch of solutions. All three solutions continue their extension to the right and thus as stated in Theorem \ref{T:p<4-1} $(ii)$ at least one positive solution is guaranteed whenever $\lambda>\lambda_1(f)$. Subsequently, the region that admits no positive solutions, described by Theorem \ref{T:p<4-1} $(iii)$, lies to the left of these curves; however, when $a$ is very small, as in (a), such region does not exist for $\lambda>0.$

The presence of the non-local effect again is evident if we compare the results here with that of Figure \ref{fg:a0} (b) in which the solution branch bifurcates from $\lambda_1(f)$ to the right, turning left once towards $\lambda=0$ without turning again for the second time. Thus, contrary to the results of $(R1)$ and $(R3)$, a positive solution is always present for $\lambda>\lambda_1(f)$ when $a>0$ and a region with no permissible positive solution lies within $0<\lambda\leq\lambda_1(f)$ when $a$ becomes sufficiently large.

\section{ Palais-Smale sequence} \label{sec:P-S}
We first clarify the notations that are to be used in the analysis. Denote by $\|\cdot\|$ and $\|\cdot\|_q$ the $H^1_0(\Omega)$-norm and $L^q(\Omega)$-norm for $1\leq q\leq\infty$, respectively,
and the best Sobolev constant by $S_r$ for the embedding of $H^1_0(\Omega)$ into $L^r(\Omega)$ with $1\leq r<2^*$ and define it by
\begin{equation}\label{r}
S_r=\inf_{u\in H^1_0(\Omega)\backslash\{0\}}\frac{\|u\|}{\|u\|_r}.
\end{equation}
A strong convergence is indicated using $``\to"$ whereas the weak convergence $``\rightharpoonup"$.
The notation $o(1)$ denotes a quantity that goes to zero as $n \to \infty$.
The notation $\{u_n\}$ is used to denote both the sequence itself and its subsequence; consequently, during the analysis below the notation $\{u_n\}$ will indicate that a subsequence may be used instead if necessary without further deliberation.
The proof that is given in this section shows that the functional $J_{a,\lambda}$ possesses the mountain pass geometry and satisfies the Palais-Smale condition and we begin by recalling the well known Mountain Pass Theorem \cite{AR}.

\begin{MPT}\label{MPT}
Let $E$ be a Banach space, $J\in C^1(E, \mathbb{R}), e\in E$ and $\rho>0$ be such that $\|e\|_E>\rho$ and
$$b:=\inf_{\|u\|_E=\rho}J(u)>J(0)\geq J(e).$$
If $J$ satisfies Palais-Smale condition at level $\alpha$ with
\begin{equation*}
\alpha:=\inf_{\gamma\in \Gamma}\max_{t\in[0,1]}J(\gamma(t))\quad\mbox{and}\quad
\Gamma:=\{\gamma\in C([0,1], E) : \gamma(0)=0, \gamma(1)=e\},
\end{equation*}
then $\alpha$ is a critical value of $J$ and $\alpha\geq b$.
\end{MPT}
Thus, we say that the functional $J_{a,\lambda}$ has the mountain pass geometry if there exist $\rho>0$ and $e\in H^1_0(\Omega)$ such that
$$\|e\|>\rho\quad\text{and}\quad\inf_{\|u\|=\rho}J_{a, \lambda}(u)>J_{a,\lambda}(0) \geq J_{a,\lambda}(e),$$
and that the functional $J_{a,\lambda}$ satisfies Palais-Smale condition at level $\alpha\in\mathbb{R}$ ($(PS)_\alpha$-condition for short) if any sequence $\{u_n\}\subset H^1_0(\Omega)$ satisfying $J_{a,\lambda}(u_n)\to \alpha$ and $J_{a,\lambda}'(u_n)\to 0$ has a convergent subsequence. Such sequence is called a Palais-Smale sequence at level $\alpha$ ($(PS)_\alpha$-sequence).

To prove the mountain pass geometry of the functional $J_{a,\lambda}$ for $\lambda$ in a right neighbourhood of $\lambda_1(f)$, we decompose each $u\in H^1_0(\Omega)$ as $u=t\phi_1+w$, where $t\in\mathbb{R}$, $w\in H^1_0(\Omega)$ and $ \int_\Omega\nabla w \nabla\phi_1dx=0$.
Then
\begin{equation}\label{u1}
\|u\|^2=\lambda_1(f)t^2+\|w\|^2.
\end{equation}
Moreover, by Remark \ref{R1.1}, we have
\begin{equation}\label{u2}
\quad \lambda_2(f)\int_{\Omega}fw^2 dx\leq \|w\|^2
\end{equation}
and
\begin{equation}\label{u3}
\lambda_1(f)\int_{\Omega}f\phi_1w dx=\int_{\Omega}\nabla \phi_1\nabla w dx=0.
\end{equation}
Thus, by $\eqref{u1}-\eqref{u3}$, we deduce that
\begin{align}
\|u\|^2-\lambda\int_\Omega fu^2dx&=\lambda_1(f)t^2+\|w\|^2-\lambda\int_\Omega (t^2f\phi_1^2+2tf\phi_1w+fw^2) dx\notag\\
\ &\geq\left(1-\frac{\lambda}{\lambda_1(f)}\right)\lambda_1(f)t^2+\left(1-\frac{\lambda}{\lambda_2(f)}\right)\|w\|^2\notag\\
\ &=\left(1-\frac{\lambda}{\lambda_1(f)}\right)\|u\|^2+\lambda\left(\frac{\lambda_2(f)-\lambda_1(f)}{\lambda_1(f)\lambda_2(f)}\right)\|w\|^2 \label{u4}\\
\ &\geq\left(1-\frac{\lambda}{\lambda_1(f)}\right)\|u\|^2. \label{u5}
\end{align}
For simplicity, we use the following notations:
\begin{equation}\label{n}
\theta_1=\frac12\left(1-\frac{\lambda}{\lambda_1(f)}\right),\quad \theta_2=\frac12\left(1-\frac{\lambda}{\lambda_2(f)}\right)\quad\text{and}\quad \Theta=\theta_2-\theta_1=\frac{\lambda}{2}\left(\frac{\lambda_2(f)-\lambda_1(f)}{\lambda_1(f)\lambda_2(f)}\right).
\end{equation}

\begin{Lemma}\label{MP:p>4}
Suppose that $N=1, 2, 3$, $4<p<2^*$ and conditions $(D1)-(D2)$ hold.
Then we have the following results.
\vspace{-8pt}
\begin{enumerate}[(i)]\itemsep-4pt
\item If $\int_\Omega g\phi_1^pdx\geq0$, then for each $a>0$ there exists
$$\delta_a^+=\lambda_1(f)(p-4)\left(\frac{pS_p^p}{\|g\|_\infty}\right)^{\frac{2}{p-4}}\left(\frac{a}{2p-4}\right)^{\frac{p-2}{p-4}}$$
such that for every $0<\lambda<\lambda_1(f)+\delta_a^+$, there exist $\rho_a>0$ and $e_0\in H^1_0(\Omega)$ such that
\begin{equation}\label{p>4}
\|e_0\|>\rho_a\quad\text{and}\quad\inf_{\|u\|=\rho_a}J_{a, \lambda}(u)>0>J_{a,\lambda}(e_0).
\end{equation}
\item If $\int_{\Omega }g\phi _{1}^{p}dx<0$, then for
each $a>0$ there exist $\delta _{a}^{-}$ and a positive number $C_{0}$ with $\delta _{a}^{-}\nearrow \infty $
as $a\nearrow \infty $ and $\delta _{a}^{-}>C_{0}$ for all $a>0$ such that for every $0<\lambda <\lambda
_{1}(f)+\delta _{a}^{-}$, there exist $\rho _{a}>0$ and $e_{0}\in
H_{0}^{1}(\Omega )$ such that \eqref{p>4} holds.
\end{enumerate}
\end{Lemma}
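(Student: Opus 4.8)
The plan is to verify the two conditions of the Mountain Pass Theorem, namely the existence of a ``mountain'' sphere $\|u\|=\rho_a$ on which $J_{a,\lambda}$ is strictly positive, and the existence of a point $e_0$ with $\|e_0\|>\rho_a$ and $J_{a,\lambda}(e_0)<0$, for every $\lambda$ in the stated range. The key asymmetry is that establishing the ``valley'' at $e_0$ is easy and essentially $\lambda$-independent for $p>4$, while pinning down the sphere requires a careful lower bound that accounts for the (possibly negative, when $\lambda>\lambda_1(f)$) quadratic term, and this is where the threshold $\delta_a^\pm$ enters.

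For the \emph{valley} ($e_0$): fix any $\varphi\in H^1_0(\Omega)$ with $\int_\Omega g|\varphi|^p\,dx>0$ (possible by $(D2)$, since $g$ changes sign) and look at $J_{a,\lambda}(s\varphi)$ as $s\to\infty$. Because $p>4$, the term $-\tfrac1p s^p\int_\Omega g|\varphi|^p\,dx$ dominates the quartic $\tfrac a4 s^4\|\varphi\|^4$, the quadratic $\tfrac12 s^2\|\varphi\|^2$, and (regardless of sign) the term $-\tfrac\lambda2 s^2\int_\Omega f\varphi^2\,dx$; hence $J_{a,\lambda}(s\varphi)\to-\infty$, so one takes $e_0=s_0\varphi$ for $s_0$ large, enlarging $s_0$ if needed so that $\|e_0\|>\rho_a$. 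This works for all $a>0$ and all $\lambda>0$, so it is not the obstacle.

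For the \emph{sphere}: using the Sobolev inequality $\int_\Omega g|u|^p\,dx\le\|g\|_\infty S_p^{-p}\|u\|^p$ and the spectral estimate \eqref{u5}, write, on $\|u\|=\rho$,
\begin{align}
J_{a,\lambda}(u)&=\frac a4\|u\|^4+\frac12\Big(\|u\|^2-\lambda\int_\Omega fu^2\,dx\Big)-\frac1p\int_\Omega g|u|^p\,dx\notag\\
&\ge\frac a4\rho^4+\frac12\Big(1-\frac{\lambda}{\lambda_1(f)}\Big)\rho^2-\frac{\|g\|_\infty}{pS_p^p}\rho^p.\notag
\end{align}
When $\int_\Omega g\phi_1^p\,dx\ge0$ we only use \eqref{u5}. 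Dividing by $\rho^2$, positivity of $J_{a,\lambda}$ on the sphere amounts to
$$h(\rho):=\frac a4\rho^2-\frac{\|g\|_\infty}{pS_p^p}\rho^{p-2}>-\frac12\Big(1-\frac{\lambda}{\lambda_1(f)}\Big)=\theta_1.$$
For $\lambda\le\lambda_1(f)$ the right side is $\le0$ and one just needs $h(\rho)>0$, i.e.\ $\rho$ small — but for $\lambda>\lambda_1(f)$, $\theta_1<0$ and we need $\max_{\rho>0}h(\rho)>-\theta_1=\tfrac12(\tfrac{\lambda}{\lambda_1(f)}-1)$. Since $p>4$, $h$ has a unique interior maximum; a direct computation of $\rho_a:=\arg\max h$ and of $h(\rho_a)$ gives exactly the condition $\lambda-\lambda_1(f)<\delta_a^+$ with $\delta_a^+$ as stated (the exponents $\tfrac{2}{p-4}$ and $\tfrac{p-2}{p-4}$ are precisely what come out of optimizing a difference of a square and a $(p-2)$-power). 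One then also checks $h(\rho_a)>0$ so the strict inequality $\inf_{\|u\|=\rho_a}J_{a,\lambda}>0$ holds even at $\lambda=\lambda_1(f)$, and shrinks $\rho_a$ if necessary to keep $\|e_0\|>\rho_a$; finally one notes $J_{a,\lambda}(0)=0$, completing case (i).

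For case (ii), $\int_\Omega g\phi_1^p\,dx<0$: here one exploits the sharper estimate \eqref{u4} rather than \eqref{u5}, retaining the favorable term $\Theta\|w\|^2$ with $\Theta>0$ from \eqref{n}. Decomposing $u=t\phi_1+w$ with $\int_\Omega\nabla w\nabla\phi_1\,dx=0$, the term $\int_\Omega g|u|^p\,dx$ is continuous and, at $w=0$, equals $t^p\int_\Omega g\phi_1^p\,dx<0$; so for $\|w\|$ small relative to $|t|$ this term stays negative (or at least controllably small), and the extra positive $\Theta\|w\|^2$ plus the quartic $\tfrac a4\|u\|^4$ buy room beyond $\lambda_1(f)$ that does not collapse as $\lambda$ moves right. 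Quantifying this by a continuity/compactness argument on the sphere $\|u\|=\rho_a$ (splitting into the region where $|t|$ is comparable to $\rho_a$, where the sign of $\int g\phi_1^p$ helps, and the region where $\|w\|$ is comparable to $\rho_a$, where $\Theta\|w\|^2$ helps) yields a threshold $\delta_a^-$; monotone-convergence/scaling in $a$ shows $\delta_a^-\nearrow\infty$ as $a\nearrow\infty$ (the quartic term, unbounded in $a$, eventually dominates), and a uniform lower bound $\delta_a^-\ge C_0>0$ follows from the $a$-independent contribution of $\Theta\|w\|^2$ together with the strict negativity of $t^p\int_\Omega g\phi_1^p\,dx$.

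The main obstacle I expect is case (ii): unlike case (i), there is no clean closed-form optimization, so one must carefully combine the two ``good'' terms (the spectral gap term $\Theta\|w\|^2$ and the non-local quartic) against the sign-indefinite perturbation of $\int_\Omega g|u|^p\,dx$ away from the $\phi_1$-axis, and then extract both the blow-up $\delta_a^-\to\infty$ and the uniform positive lower bound $C_0$; keeping the constants uniform in $\lambda$ on the whole sphere is the delicate point.
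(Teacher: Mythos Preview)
Your plan for part (i) is essentially the paper's argument: the same Sobolev/spectral lower bound, the same one-variable function $h^+(\rho)=\tfrac a4\rho^2+\tfrac12(1-\lambda/\lambda_1(f))-\tfrac{\|g\|_\infty}{pS_p^p}\rho^{p-2}$, and the same optimization producing $\rho_a$ and $\delta_a^+$. (Minor slip: you do not ``shrink $\rho_a$'' to get $\|e_0\|>\rho_a$; $\rho_a$ is fixed by the optimization and you instead take $s_0$ large.)

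For part (ii) your strategic ingredients are right --- the decomposition $u=t\phi_1+w$, the favorable sign of $-\int_\Omega g|t\phi_1|^p\,dx$, and the spectral-gap term $\Theta\|w\|^2$ --- but the paper does \emph{not} proceed by a continuity/compactness argument on the sphere. Instead it makes everything explicit: write $\int_\Omega g|u|^p-\int_\Omega g|t\phi_1|^p$ via the mean value theorem, bound it by Young's inequality with a tuned constant $B$ so that half of the good term $\tfrac{|\int g\phi_1^p|}{p}|t|^p$ survives, and then convert $|t|^p$ back into $\|u\|^p$ using $\lambda_1(f)t^2=\|u\|^2-\|w\|^2$ and the elementary convexity bound $(\|u\|^2-\|w\|^2)^{p/2}\ge 2^{1-p/2}\|u\|^p-\|w\|^p$. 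This yields the clean two-parameter lower bound
\[
J_{a,\lambda}(u)\ \ge\ \tfrac a4\|u\|^4+\theta_1\|u\|^2+(\theta_2-\theta_1)\|w\|^2+B_1\|u\|^p-B_2\|w\|^p
\]
with explicit $B_1,B_2>0$, after which positivity on $\|u\|=\rho$ reduces to the two scalar conditions $h(0)>0$ and $h(\rho)>0$, i.e.\ $\lambda<\min\{h_1(\rho),h_2(\rho)\}$ for explicit polynomials $h_1,h_2$ in $\rho$. The quantitative claims ($\delta_a^-\nearrow\infty$ and $\delta_a^->C_0$) then drop out by reading off these formulas: in one regime $\delta_a^-=h_2(\rho_0)-\lambda_1(f)$ with the maximizer $\rho_0\sim a^{1/(p-4)}$, in the other $\delta_a^-=h_1(\rho_a)-\lambda_1(f)$ with $\rho_a$ solving $h_1=h_2$; in both cases the $a$-dependence is transparent and the $a$-independent additive piece $\lambda_2(f)-\lambda_1(f)$ (resp.\ the $B_1$-term) supplies the uniform $C_0$. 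Your compactness sketch could in principle be made to work, but extracting these uniform-in-$a$ statements from it would be awkward; the paper's explicit inequality route is what makes them immediate.
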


\begin{proof}
$(i)$ By $\eqref{r}, \eqref{u5}$ and condition $(D2),$ we deduce that
\begin{align*}
J_{a, \lambda}(u)\geq\frac a4\|u\|^4+\frac 12\left(1-\frac{\lambda}{\lambda_1(f)}\right)\|u\|^2-\frac{\|g\|_\infty}{pS_p^p}\|u\|^p.
\end{align*}
Consider the function $h^+ : \mathbb{R}^+\to\mathbb{R}$ which is defined by
$$h^+(\rho)=\frac a4\rho^2+\frac12\left(1-\frac{\lambda}{\lambda_1(f)}\right)-\frac{\|g\|_\infty}{pS_p^p}\rho^{p-2}, \quad \rho>0.$$
It is easy to obtain the absolute maximum value $h^+(\rho_a)$ at
$$\rho_a=\left(\frac{apS_p^p}{(2p-4)\|g\|_\infty}\right)^{\frac{1}{p-4}}.$$
Then a direct calculation shows that $h^+(\rho_a)>0$ for every $0<\lambda<\lambda_1(f)+\delta_a^+.$
Thus, for every $0<\lambda<\lambda_1(f)+\delta_a^+$ and $\|u\|=\rho_a$, we obtain
\begin{align*}
J_{a, \lambda}(u)&\geq\rho_a^2h^+(\rho_a)>0.
\end{align*}
By condition $(D2)$, we take $\varphi\in H^1_0(\Omega)$ such that $\int_\Omega g|\varphi|^pdx>0$ and $\|\varphi\|=1$. Then for any $t>0$,
\begin{equation*}
J_{a,\lambda}(t\varphi)=\frac a4t^4+\left(\frac12-\frac{\lambda}{2}\int_\Omega f\varphi^2dx\right)t^2-\frac{\int_\Omega g|\varphi|^pdx}{p}t^p.
\end{equation*}
This implies that there exists $t_0>0$ such that $\|t_0\varphi\|>\rho_a$ and $J_{a,\lambda}(t_0\varphi)<0$.

$(ii)$ Using \eqref{u4}, we have
\begin{align}\label{J1}
 \ &\ J_{a, \lambda}(u)\notag\\
\geq&\ \frac a4\|u\|^4+\theta_1\|u\|^2+(\theta_2-\theta_1)\|w\|^2-\frac1p\int_\Omega g|t\phi_1|^pdx-\frac1p\left(\int_\Omega g|t\phi_1+w|^pdx-\int_\Omega g|t\phi_1|^pdx\right),
\end{align}
where $\theta_1$ and $\theta_2$ are as in \eqref{n}, and $u=t\phi_1+w$ with $t\in\mathbb{R}$, $w\in H^1_0(\Omega)$ and $\int_\Omega \nabla w\nabla\phi_1dx=0$.
By the mean value theorem, there exists $\theta$ with $0<\theta<1$ such that
\begin{equation}\label{g1}
\frac1p\left(\int_\Omega g|t\phi_1+w|^pdx-\int_\Omega g|t\phi_1|^pdx\right)=\int_\Omega g|t\phi_1+\theta w|^{p-2}(t\phi_1+\theta w)w dx.
\end{equation}
Young's inequality then gives
\begin{align}\label{g2}
\ &\quad \left|\int_\Omega g|t\phi_1+\theta w|^{p-2}(t\phi_1+\theta w)w dx\right|\notag\\
\ &\leq \|g\|_\infty2^{p-2}\int_\Omega\left(|t\phi_1|^{p-1}+|\theta w|^{p-1}\right)|w| dx\notag\\
\ &\leq \|g\|_\infty2^{p-2}\int_\Omega \frac{p-1}{p}B^{\frac{p}{p-1}}|t\phi_1|^p+\frac{1}{pB^p}|w|^p+|w|^p dx\notag\\
\ &=\frac{|\int_\Omega g\phi_1^pdx|}{2p}|t|^p+\frac{2^{p-2}\|g\|_\infty(1+pB^p)}{pB^pS_p^p}\|w\|^p,
\end{align}
where
\begin{equation}\label{B}
B=\left(\frac{|\int_\Omega g\phi_1^pdx|}{2^{p-1}(p-1)\|g\|_\infty\|\phi_1\|_p^p}\right)^{\frac{p-1}{p}}.
\end{equation}
Subsequently, combining $\eqref{J1}-\eqref{g2},$ we have
\begin{align}
\ & \quad J_{a,\lambda }(u)  \notag \\
\ & \geq \frac{a}{4}\Vert u\Vert ^{4}+\theta _{1}\Vert u\Vert ^{2}+(\theta
_{2}-\theta _{1})\Vert w\Vert ^{2}+\frac{|\int_{\Omega }g\phi _{1}^{p}dx|}{2p%
}|t|^{p}-\frac{2^{p-2}\Vert g\Vert _{\infty }(1+pB^{p})}{pB^{p}S_{p}^{p}}%
\Vert w\Vert ^{p}  \notag \\
\ & \geq\frac{a}{4}\Vert u\Vert ^{4}+\theta _{1}\Vert u\Vert ^{2}+(\theta
_{2}-\theta _{1})\Vert w\Vert ^{2}+\frac{|\int_{\Omega }g\phi _{1}^{p}dx|}{%
2p\lambda _{1}(f)^{\frac{p}{2}}}\left( \frac{\Vert u\Vert ^{p}}{2^{\frac p2-1}}-\Vert w\Vert
^{p}\right) -\frac{2^{p-2}\Vert g\Vert _{\infty }(1+pB^{p})}{%
pB^{p}S_{p}^{p}}\Vert w\Vert ^{p}  \notag \\
\ & = \frac{a}{4}\Vert u\Vert ^{4}+\theta _{1}\Vert u\Vert ^{2}+(\theta
_{2}-\theta _{1})\Vert w\Vert ^{2}+B_{1}\Vert u\Vert ^{p}-B_{2}\Vert w\Vert
^{p},  \label{J1'}
\end{align}
where
$$B_1=\frac{|\int_\Omega g\phi_1^pdx|}{2^{\frac p2}p\lambda_1(f)^{\frac p2}}\quad\text{and}\quad B_2=\frac{|\int_\Omega g\phi_1^pdx|}{2p\lambda_1(f)^{\frac p2}}+\frac{2^{p-2}\|g\|_\infty(1+pB^p)}{pB^pS_p^p}.$$
Let $\rho>0$ and define the function $h : [0,\rho]\to\mathbb{R}$ by
$$h(x)=\theta_1\rho^2+\frac a4\rho^4+B_1\rho^p+(\theta_2-\theta_1)x^2-B_2x^p,\quad x\in [0, \rho].$$
We conclude that there exists $\eta >0$ such that $h(x)\geq \eta $ for all $%
x\in \lbrack 0,\rho ]$  if and only if $h(0)>0$ and $h(\rho)>0$, which gives the condition $\lambda<\min\{h_1(\rho), h_2(\rho)\}$, where
\begin{equation*}
h_1(\rho)=\lambda_1(f)+\frac a2 \lambda_1(f)\rho^2+2B_1\lambda_1(f)\rho^{p-2}
\end{equation*}
and
\begin{equation*}
h_2(\rho)=\lambda_2(f)+\frac a2 \lambda_2(f)\rho^2-2(B_2-B_1)\lambda_2(f)\rho^{p-2}.
\end{equation*}
Thus, by the above argument and $\eqref{J1'},$ for any $\rho_a>0$, we conclude that for every $0<\lambda<\min\{h_1(\rho_a), h_2(\rho_a)\}$ and $\|u\|=\rho_a$,
\begin{equation}\label{J2}
J_{a,\lambda}(u)\geq\min\left\{\frac a4\rho_a^4+\theta_1\rho_a^2+B_1\rho_a^p,\ \frac a4\rho_a^4+\theta_2\rho_a^2+(B_1-B_2)\rho_a^p\right\}>0.
\end{equation}
By condition $(D2)$, we take $\varphi\in H^1_0(\Omega)$ such that $\int_\Omega g|\varphi|^pdx>0$ and $\|\varphi\|=1$. Then for any $t>0$,
\begin{equation*}
J_{a,\lambda}(t\varphi)=\frac a4t^4+\left(\frac12-\frac{\lambda}{2}\int_\Omega f\varphi^2dx\right)t^2-\frac{\int_\Omega g|\varphi|^pdx}{p}t^p.
\end{equation*}
This implies that there exists $t_0>0$ such that $\|t_0\varphi\|>\rho_a$ and $J_{a,\lambda}(t_0\varphi)<0$.

Next, we choose $\rho_a$ to give a clear range for $\lambda$.
Evidently, $h_2$ has a absolute maximum value at $\rho_0=\left(\frac{a}{(2p-4)(B_2-B_1)}\right)^{\frac{1}{p-4}}$. If $h_1(\rho_0)\geq h_2(\rho_0)$, we take $\rho_a=\rho_0$ and thus $\min\{h_1(\rho_a), h_2(\rho_a)\}=h_2(\rho_a)=\lambda_1(f)+\delta_a^-$,
where
$$\delta_a^-=\lambda_2(f)(p-4)(B_2-B_1)^{\frac{-2}{p-4}}\left(\frac{a}{2p-4}\right)^{\frac{p-2}{p-4}}+\lambda_2(f)-\lambda_1(f).$$
It is evident that $\delta_a^-$ is increasing, $\delta_a^-\to\infty$ as $a\to\infty$, and $\delta_a^->\lambda_2(f)-\lambda_1(f)$ for all $a>0$.
If $h_1(\rho_0)<h_2(\rho_0)$, then we take $\rho_a$ satisfying $h_1(\rho_a)=h_2(\rho_a)$, that is,
\begin{equation}\label{h-1}
4\left(B_1\lambda_1(f)+(B_2-B_1)\lambda_2(f)\right)\rho_a^{p-2}=a(\lambda_2(f)-\lambda_1(f))\rho_a^2+2\left(\lambda_2(f)-\lambda_1(f)\right).
\end{equation}
Thus, $\min\{h_1(\rho_a), h_2(\rho_a)\}=h_1(\rho_a)=\lambda_1(f)+\delta_a^-$, where
\begin{equation}\label{h-2}
\delta_a^-=\frac a2 \lambda_1(f)\rho_a^2+2B_1\lambda_1(f)\rho_a^{p-2}.
\end{equation}
By \eqref{h-1}, we consider the following function:
$$h^-(x)=4\left(B_1\lambda_1(f)+(B_2-B_1)\lambda_2(f)\right)x^{\frac{p-2}{2}}-a(\lambda_2(f)-\lambda_1(f))x-2\left(\lambda_2(f)-\lambda_1(f)\right),\quad x>0.$$
Let
$$x_1=\left(\frac{a(\lambda_2(f)-\lambda_1(f))}{4\left(B_1\lambda_1(f)+(B_2-B_1)\lambda_2(f)\right)}\right)^{\frac{2}{p-4}}\quad \text{and}\quad x_2=x_1+\left(\frac{\lambda_2(f)-\lambda_1(f)}{2\left(B_1\lambda_1(f)+(B_2-B_1)\lambda_2(f)\right)}\right)^{\frac{2}{p-2}}.$$
It is easy to obtain that $h^-(x_1)<h^-(\rho_a^2)=0<h^-(x_2)$, which implies that there exists a constant $D_0$ with
$$0<D_0<\left(\frac{\lambda_2(f)-\lambda_1(f)}{2\left(B_1\lambda_1(f)+(B_2-B_1)\lambda_2(f)\right)}\right)^{\frac{2}{p-2}}$$
such that
\begin{equation}\label{h-3}
\rho_a^2=\left(\frac{a(\lambda_2(f)-\lambda_1(f))}{4\left(B_1\lambda_1(f)+(B_2-B_1)\lambda_2(f)\right)}\right)^{\frac{2}{p-4}}+D_0.
\end{equation}
Combining $\eqref{h-1}-\eqref{h-3}$, we deduce that
\begin{align*}
\delta_a^-&=\frac{aB_2\lambda_1(f)\lambda_2(f)}{2\left(B_1\lambda_1(f)+(B_2-B_1)\lambda_2(f)\right)}\rho_a^2+\frac{B_1\lambda_1(f)(\lambda_2(f)-\lambda_1(f))}{B_1\lambda_1(f)+(B_2-B_1)\lambda_2(f)}\\
\ &=B_2\lambda_1(f)\lambda_2(f)\left(\frac{\lambda_2(f)-\lambda_1(f)}{2}\right)^{\frac{2}{p-4}}\left(\frac{a}{2\left(B_1\lambda_1(f)+(B_2-B_1)\lambda_2(f)\right)}\right)^{\frac{p-2}{p-4}}\\
\ &\quad +\frac{aB_2\lambda_1(f)\lambda_2(f)D_0}{2\left(B_1\lambda_1(f)+(B_2-B_1)\lambda_2(f)\right)}+\frac{B_1\lambda_1(f)(\lambda_2(f)-\lambda_1(f))}{B_1\lambda_1(f)+(B_2-B_1)\lambda_2(f)}.
\end{align*}
This completes the proof.
\end{proof}

According to Lemma \ref{MP:p>4}, when $p>4$, the functional $J_{a,\lambda}$ has the mountain pass geometry for $\lambda$ in a right neighbourhood of  $\lambda_1(f)$ whether the sign of $\int_\Omega g\phi_1^pdx$ is negative or not. Furthermore, the neighbourhood continues to expand to the right as the non-local effect grows. The mountain pass type solution is presented in Figure \ref{fg:p5} which is the upper branch of each curve.

\begin{Lemma}\label{MP:p=4}
Suppose that $N=1, 2, 3$, $p=4$ and conditions $(D1)-(D2)$ hold. Then we have the following results.
\vspace{-8pt}
\begin{enumerate}[(i)]\itemsep-4pt
\item For each $0<a<\Gamma_0$ and $0<\lambda<\lambda_1(f)$, there exist $\rho_\lambda>0$ and $e_0\in H^1_0(\Omega)$
such that
\begin{equation}\label{p=4}
\|e_0\|>\rho_\lambda\quad\text{and}\quad\inf_{\|u\|=\rho_\lambda}J_{a, \lambda}(u)>0>J_{a,\lambda}(e_0).
\end{equation}
\item If $\int_\Omega g\phi_1^4dx>0$, then for each $\lambda_1(f)^{-2}\int_\Omega g\phi_1^4dx<a<\Gamma_0$, there exists $\delta_1>0$ such that for every $\lambda_1(f)\leq\lambda<\lambda_1(f)+\delta_1$, there exist $\rho_0>0$ and $e_0\in H^1_0(\Omega)$ such that \eqref{p=4} holds.
\item If $\int_\Omega g\phi_1^4dx\leq 0$, then for each $0<a<\Gamma_0$, there exists  $\delta_1>0$ such that for every $\lambda_1(f)\leq\lambda<\lambda_1(f)+\delta_1$, there exist $\rho_0>0$ and $e_0\in H^1_0(\Omega)$ such that \eqref{p=4} holds.
\end{enumerate}
\end{Lemma}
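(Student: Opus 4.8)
The plan is to prove part (i) by a direct estimate and to handle parts (ii) and (iii) together by a compactness argument, the point being that the only genuine difficulty is that $\lambda$ is allowed to reach or cross $\lambda_1(f)$. For part (i), since $0<\lambda<\lambda_1(f)$ the quadratic form $\|u\|^2-\lambda\int_\Omega fu^2\,dx$ is positive definite, bounded below by $(1-\lambda/\lambda_1(f))\|u\|^2$ by \eqref{u5}; combined with $\int_\Omega gu^4\,dx\le\Gamma_0\|u\|^4$ this gives $J_{a,\lambda}(u)\ge\tfrac14(a-\Gamma_0)\|u\|^4+\tfrac12(1-\lambda/\lambda_1(f))\|u\|^2$, which is strictly positive on $\|u\|=\rho_\lambda$ as soon as $\rho_\lambda^2<2(1-\lambda/\lambda_1(f))/(\Gamma_0-a)$ (here $a<\Gamma_0$ is used). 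For the descending direction, since $\Gamma_0=\sup_{\|v\|=1}\int_\Omega gv^4\,dx>a$ we may pick $\varphi$ with $\|\varphi\|=1$ and $\int_\Omega g\varphi^4\,dx>a$; then $J_{a,\lambda}(t\varphi)=\tfrac14\big(a-\int_\Omega g\varphi^4\,dx\big)t^4+\tfrac12\big(1-\lambda\int_\Omega f\varphi^2\,dx\big)t^2\to-\infty$ as $t\to\infty$, so $e_0=t_0\varphi$ works for $t_0$ large.

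For (ii) and (iii), the form $\|u\|^2-\lambda\int_\Omega fu^2\,dx$ is no longer positive definite when $\lambda\ge\lambda_1(f)$, and the bound $\int_\Omega gu^4\,dx\le\Gamma_0\|u\|^4$ is too wasteful near $u=t\phi_1$. I would instead establish the following dichotomy, available under the hypothesis $\lambda_1(f)^{-2}\int_\Omega g\phi_1^4\,dx<a$: fixing any $\beta$ with $\lambda_1(f)^{-2}\int_\Omega g\phi_1^4\,dx<\beta<a$, there is $\eta>0$ such that every $v\in H^1_0(\Omega)$ with $\|v\|=1$ satisfies either $1-\lambda_1(f)\int_\Omega fv^2\,dx\ge\eta$ or $\int_\Omega gv^4\,dx\le\beta$. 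This is proved by contradiction: a sequence $v_n$ with $\|v_n\|=1$, $1-\lambda_1(f)\int_\Omega fv_n^2\,dx\to0$ and $\int_\Omega gv_n^4\,dx>\beta$ would, along a subsequence, converge weakly in $H^1_0(\Omega)$ and strongly in $L^2(\Omega)$ and $L^4(\Omega)$ (compactness of these embeddings uses $N\le3$, $4<2^*$) to a limit $v$ with $\int_\Omega fv^2\,dx=1/\lambda_1(f)>0$; the Rayleigh characterisation \eqref{R1.2} and weak lower semicontinuity force $\|v\|=1$, hence strong $H^1_0$-convergence, and since $\lambda_1(f)$ is simple (Remark \ref{R1.1}), $v=\pm\phi_1/\sqrt{\lambda_1(f)}$, so $\int_\Omega gv_n^4\,dx\to\lambda_1(f)^{-2}\int_\Omega g\phi_1^4\,dx<\beta$, a contradiction. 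In case (iii), $\int_\Omega g\phi_1^4\,dx\le0$ makes $\lambda_1(f)^{-2}\int_\Omega g\phi_1^4\,dx<a$ automatic for every $0<a<\Gamma_0$, so the same statement holds (with, say, $\beta=a/2$), and the rest is identical to (ii).

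Writing each $u$ with $\|u\|=\rho_0$ as $u=\rho_0 v$, $\|v\|=1$, one has $J_{a,\lambda}(u)=\tfrac14\rho_0^4\big(a-\int_\Omega gv^4\,dx\big)+\tfrac12\rho_0^2\big(1-\lambda\int_\Omega fv^2\,dx\big)$, and I would split according to the two alternatives, using throughout that $\int_\Omega fv^2\,dx\le1/\lambda_1(f)$ (from \eqref{R1.2}) and that $|1-\lambda/\lambda_1(f)|\le\delta_1/\lambda_1(f)$ for $\lambda_1(f)\le\lambda<\lambda_1(f)+\delta_1$. In the first alternative, $1-\lambda\int_\Omega fv^2\,dx\ge\eta-\delta_1/\lambda_1(f)$ and $a-\int_\Omega gv^4\,dx\ge a-\Gamma_0$, so $J_{a,\lambda}(u)>0$ whenever $\delta_1<\tfrac12\eta\lambda_1(f)$ and $\rho_0^2<\eta/(\Gamma_0-a)$; in the second, $a-\int_\Omega gv^4\,dx\ge a-\beta>0$ and $1-\lambda\int_\Omega fv^2\,dx\ge-\delta_1/\lambda_1(f)$, so $J_{a,\lambda}(u)>0$ whenever $\rho_0^2>2\delta_1/\big(\lambda_1(f)(a-\beta)\big)$. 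These two ranges for $\rho_0^2$ overlap once $\delta_1$ is taken small enough (e.g. $\delta_1<\min\{\tfrac12\eta\lambda_1(f),\ \tfrac12\eta\lambda_1(f)(a-\beta)/(\Gamma_0-a)\}$); we then fix such a $\delta_1$, pick $\rho_0$ in the common range, obtain $e_0=t_0\varphi$ exactly as in part (i), and conclude \eqref{p=4}.

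The main obstacle is the dichotomy itself: one must make quantitative the fact that on the unit sphere, $1-\lambda_1(f)\int_\Omega fv^2\,dx$ being small forces $v$ to be $L^4$-close to $\pm\phi_1/\sqrt{\lambda_1(f)}$, which is exactly where the compact Sobolev embedding (hence the restriction $N\le3$) and the simplicity of the principal eigenvalue enter; the subsequent choice of $\delta_1$ and $\rho_0$ is then only the bookkeeping needed to make the two admissible windows for $\rho_0^2$ intersect. I would expect a purely algebraic route — expanding $(t\phi_1+w)^4$ and controlling the cross terms by Young's inequality — to be harder to push through, because the Young constant needed to absorb the mixed terms degrades the very coefficient one is trying to keep positive when $a$ is close to $\lambda_1(f)^{-2}\int_\Omega g\phi_1^4\,dx$.
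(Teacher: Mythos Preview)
Your proof is correct and takes a genuinely different route for parts (ii)--(iii). For (i) the paper uses the cruder bound $\int_\Omega gu^4\,dx\le\|g\|_\infty S_4^{-4}\|u\|^4$ instead of your $\Gamma_0\|u\|^4$, but the argument is otherwise the same. For (ii)--(iii), however, the paper proceeds exactly along the algebraic line you rejected: it decomposes $u=t\phi_1+w$ orthogonally, writes $\int_\Omega g|t\phi_1+w|^4\,dx-\int_\Omega g|t\phi_1|^4\,dx$ via the mean value theorem, and absorbs the cross terms by Young's inequality with a parameter $B_4$ chosen so that the resulting $|t|^4$-coefficient is exactly $\tfrac12\Phi_4(\phi_1)$, where $\Phi_4(\phi_1):=a\lambda_1(f)^2-\int_\Omega g\phi_1^4\,dx>0$; this yields an explicit $\rho_0$ (equation \eqref{rho4}) and an explicit smallness condition on $\delta_1$. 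Your worry that the Young constant would degrade as $a\to\lambda_1(f)^{-2}\int_\Omega g\phi_1^4\,dx$ is correct in the sense that the paper's $\rho_0$ and $\delta_1$ tend to zero there, but they remain strictly positive whenever $\Phi_4(\phi_1)>0$, which is precisely the hypothesis, so the algebraic route does go through. The trade-off: the paper's constants are explicit and the same $\rho_0$ is reused later (Theorem~\ref{T4.2}\,(iii)) to locate the second solution $u^-$ as a local minimiser on $\overline{B}_{\rho_0}$, whereas your compactness dichotomy is conceptually cleaner, avoids the Young-inequality bookkeeping, and would adapt more readily to nonlinearities other than $u^4$, at the cost of non-explicit $\eta$, $\rho_0$, $\delta_1$.
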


\begin{proof}
$(i)$
By $\eqref{r}, \eqref{u5}$ and condition $(D2),$ we have
\begin{align*}
J_{a, \lambda}(u)\geq\frac a4\|u\|^4+\frac 12\left(1-\frac{\lambda}{\lambda_1(f)}\right)\|u\|^2-\frac{\|g\|_\infty}{4S_4^4}\|u\|^4.
\end{align*}
Let
\begin{equation*}
\rho_\lambda=\left(\frac14\left(1-\frac{\lambda}{\lambda_1(f)}\right)\frac{4S_4^4}{\|g\|_\infty}\right)^{\frac{1}{2}}.
\end{equation*}
Then for all $\|u\|=\rho_\lambda$, we have
\begin{equation*}
J_{a, \lambda}(u)\geq\frac a4\rho_\lambda^4+\frac14\left(1-\frac{\lambda}{\lambda_1(f)}\right)\rho_\lambda^2>0.
\end{equation*}
Since $a<\Gamma_0$, there exists $\varphi\in H^1_0(\Omega)$ such that $a\|\varphi\|^4<\int_\Omega g\varphi^4dx$. Then for any $t>0$,
\begin{equation*}
J_{a,\lambda}(t\varphi)=\left(\frac{\|\varphi\|^2}{2}-\frac{\lambda}{2}\int_\Omega f\varphi^2dx\right)t^2-\frac{\int_\Omega g\varphi^4dx-a\|\varphi\|^4}{4}t^4,
\end{equation*}
which implies that there exists $t_\lambda>0$ such that $\|t_\lambda\varphi\|>\rho_\lambda$ and $J_{a,\lambda}(t_\lambda\varphi)<0$.

$(ii)$ Using \eqref{u1} and \eqref{u4}, we have
\begin{align}
\ &\ J_{a, \lambda}(u)\notag\\
\geq&\ \frac a4(\lambda_1(f)t^2+\|w\|^2)^2+\theta_1\|u\|^2+\Theta\|w\|^2-\frac{\int_\Omega g\phi_1^4dx}{4}t^4 -\frac14\left(\int_\Omega g(|t\phi_1+w|^4-|t\phi_1|^4)dx\right)\notag\\
\geq&\ \frac a4\|w\|^4-|\theta_1|\|u\|^2+\Theta\|w\|^2+\frac{a\lambda_1(f)^2-\int_\Omega g\phi_1^4dx}{4}t^4-\frac14\left(\int_\Omega g(|t\phi_1+w|^4-|t\phi_1|^4)dx\right), \label{J41}
\end{align}
where $\theta_1$ and $\Theta$ are as in \eqref{n}, and $u=t\phi_1+w$ with $t\in\mathbb{R}$, $w\in H^1_0(\Omega)$ and $\int_\Omega \nabla w\nabla\phi_1dx=0$.
Denote
$$\Phi_4(\phi_1)=a\lambda_1(f)^2-\int_\Omega g\phi_1^4dx.$$
We have $\Phi_4(\phi_1)>0$ since $a>\lambda_1(f)^{-2}\int_\Omega g\phi_1^4dx$.
Repeating the same process in \eqref{g1} and \eqref{g2} then gives
\begin{equation}\label{g41}
\frac14\left(\int_\Omega g(|t\phi_1+w|^4-|t\phi_1|^4)dx\right)\leq\left(\frac{\Phi_4(\phi_1)}{8}t^4+\frac{\|g\|_\infty(1+4B_4^4)}{B_4^4S_4^4}\|w\|^4\right),
\end{equation}
where $B_4=\left(\frac{\Phi_4(\phi_1)}{24\|g\|_\infty\|\phi_1\|_4^4}\right)^{\frac{3}{4}}$.
Subsequently, combining \eqref{J41} and \eqref{g41}, we deduce that
\begin{align}
\ &\quad J_{a, \lambda}(u)\notag\\
\ &\geq\frac a4\|w\|^4-|\theta_1|\|u\|^2+\Theta\|w\|^2+\frac{\Phi_4(\phi_1)}{8}t^4-\frac{\|g\|_\infty(1+4B_4^4)}{B_4^4S_4^4}\|w\|^4\notag\\
\ &=\frac a4\|w\|^4-|\theta_1|\|u\|^2+\Theta\|w\|^2+\frac{\Phi_4(\phi_1)}{8\lambda_1(f)^2}(\|u\|^2-\|w\|^2)^2-\frac{\|g\|_\infty(1+4B_4^4)}{B_4^4S_4^4}\|w\|^4\notag\\
\ &\geq \frac a4\|w\|^4-|\theta_1|\|u\|^2+\Theta\|w\|^2+\frac{\Phi_4(\phi_1)}{8\lambda_1(f)^2}\left(\frac{\|u\|^4}{2}-\|w\|^4\right)-\frac{\|g\|_\infty(1+4B_4^4)}{B_4^4S_4^4}\|w\|^4\notag\\
\ &=-|\theta_1|\|u\|^2+\frac{\Phi_4(\phi_1)}{16\lambda_1(f)^2}\|u\|^4+\|w\|^2\left(\Theta-\left(\frac{\Phi_4(\phi_1)}{8\lambda_1(f)^2}+\frac{\|g\|_\infty(1+4B_4^4)}{B_4^4S_4^4}\right)\|w\|^2\right)+\frac a4\|w\|^4. \label{J42}
\end{align}
Since $\lambda\geq\lambda_1(f)$, we have
$$\Theta\geq\frac{\lambda_2(f)-\lambda_1(f)}{2\lambda_2(f)}>0.$$
Let
\begin{equation}\label{rho4}
\rho_0=\left(\frac{\lambda_2(f)-\lambda_1(f)}{2\lambda_2(f)}\right)^{\frac12}\left(\frac{\Phi_4(\phi_1)}{8\lambda_1(f)^2}+\frac{\|g\|_\infty(1+4B_4^4)}{B_4^4S_4^4}\right)^{-\frac12}.
\end{equation}
Then
\begin{equation}\label{J43}
\Theta-\left(\frac{\Phi_4(\phi_1)}{8\lambda_1(f)^2}+\frac{\|g\|_\infty(1+4B_4^4)}{B_4^4S_4^4}\right)\|w\|^2\geq0\quad\text{ for every $0\leq\|w\|\leq\rho_0$}.
\end{equation}
Moreover, there exists $\delta_1>0$ such that
\begin{equation}\label{J44}
|\theta_1|=\left|\frac12\left(1-\frac{\lambda}{\lambda_1(f)}\right)\right|\leq\frac{\Phi_4(\phi_1)}{32\lambda_1(f)^2}\rho_0^2 \quad\text{ for every } \lambda_1(f)\leq\lambda<\lambda_1(f)+\delta_1.
\end{equation}
It follows from $\eqref{J42}-\eqref{J44}$ that for every $\lambda_1(f)\leq\lambda<\lambda_1(f)+\delta_1$ and $\|u\|=\rho_0$,
$$J_{a,\lambda}(u)\geq\frac{\Phi_4(\phi_1)}{32\lambda_1(f)^2}\rho_0^4>0.$$
Since $a<\Gamma_0$, there exists $\varphi\in H^1_0(\Omega)$ such that $a\|\varphi\|^4<\int_\Omega g\varphi^4dx$. Then for any $t>0$,
\begin{equation*}
J_{a,\lambda}(t\varphi)=\left(\frac{\|\varphi\|^2}{2}-\frac{\lambda}{2}\int_\Omega f\varphi^2dx\right)t^2-\frac{\int_\Omega g\varphi^4dx-a\|\varphi\|^4}{4}t^4.
\end{equation*}
This implies that there exists $t_0>0$ such that $\|t_0\varphi\|>\rho_0$ and $J_{a,\lambda}(t_0\varphi)<0$.

$(iii)$ The proof is identical to that in part $(ii)$ and is omitted here.
\end{proof}

\begin{Lemma}\label{MP:p<4+}
Suppose that $N\geq1$, $2<p<\min\{4, 2^*\}$ and conditions $(D1)-(D2)$ hold. If $\int_\Omega g\phi_1^pdx>0$, then for each $a>0$, and $\max\{0, \lambda_a^+\}<\lambda<\lambda_1(f)$, there exist $\rho_{a,\lambda}^+>0$ and $e_0\in H^1_0(\Omega)$ such that
\begin{equation}\label{*3}
\|e_0\|>\rho_{a, \lambda}^+\quad\text{and}\quad\inf_{\|u\|=\rho_{a,\lambda}^+}J_{a,\lambda}(u)>0>J_{a, \lambda}(e_0),
\end{equation}
where $\lambda_a^+$ is as in \eqref{lam-a+}.
\end{Lemma}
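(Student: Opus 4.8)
The plan is to verify the two requirements of the mountain pass geometry separately. For the small sphere, the decisive point is that $\lambda<\lambda_1(f)$ forces $1-\lambda/\lambda_1(f)>0$, so the origin is a strict local minimum of $J_{a,\lambda}$. Concretely, combining \eqref{u5}, condition $(D2)$ and the Sobolev embedding \eqref{r}, we obtain
$$J_{a,\lambda}(u)\geq\frac a4\|u\|^4+\frac12\left(1-\frac{\lambda}{\lambda_1(f)}\right)\|u\|^2-\frac{\|g\|_\infty}{pS_p^p}\|u\|^p.$$
On $\|u\|=\rho$ the right-hand side equals $\rho^2\left(\frac a4\rho^2+\frac12(1-\lambda/\lambda_1(f))-\frac{\|g\|_\infty}{pS_p^p}\rho^{p-2}\right)$, which is strictly positive once $\rho$ is small, since $p-2>0$. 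This carves out an admissible window for $\rho_{a,\lambda}^+$, whose precise value will be fixed at the end once we also require $\rho_{a,\lambda}^+<\|e_0\|$.

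In contrast with the regime $p>4$, here $2<p<4$ makes $J_{a,\lambda}$ coercive along every ray, since the quartic term $\frac a4\|u\|^4$ eventually dominates $\frac1p\int_\Omega g|u|^p\,dx$; consequently a point $e_0$ with $J_{a,\lambda}(e_0)<0$ can only be produced at an intermediate distance from the origin, and this is exactly where the hypothesis $\lambda>\lambda_a^+$ enters. I would test with the principal eigenfunction: by Remark \ref{R1.1} one has $\|\phi_1\|^2=\lambda_1(f)$ and $\int_\Omega f\phi_1^2\,dx=1$, whence
$$J_{a,\lambda}(t\phi_1)=\frac{a\lambda_1(f)^2}{4}t^4+\frac{\lambda_1(f)-\lambda}{2}t^2-\frac{\int_\Omega g\phi_1^p\,dx}{p}t^p,\qquad t>0.$$
Dividing by $t^2$ and minimising $\frac{a\lambda_1(f)^2}{4}t^2-\frac1p\left(\int_\Omega g\phi_1^p\,dx\right)t^{p-2}$ over $t>0$ — which, since $\int_\Omega g\phi_1^p\,dx>0$ and $2<p<4$, has a unique minimiser with minimum value $-\frac{(4-p)\int_\Omega g\phi_1^p\,dx}{2p}\left(\frac{2(p-2)\int_\Omega g\phi_1^p\,dx}{a\,p\,\lambda_1(f)^2}\right)^{\frac{p-2}{4-p}}$ — one sees that $J_{a,\lambda}(t\phi_1)<0$ for some $t>0$ if and only if this minimum value lies strictly below $-\frac{\lambda_1(f)-\lambda}{2}$; after rearranging and matching exponents, this is precisely the condition $\lambda>\lambda_a^+$ with $\lambda_a^+$ as in \eqref{lam-a+}. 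Hence there is $t_0>0$ with $J_{a,\lambda}(t_0\phi_1)<0$, and we set $e_0=t_0\phi_1$.

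To conclude, fix this $e_0=t_0\phi_1$ and then pick $\rho_{a,\lambda}^+>0$ small enough that both $\frac a4\rho^2+\frac12(1-\lambda/\lambda_1(f))-\frac{\|g\|_\infty}{pS_p^p}\rho^{p-2}>0$ on $\|u\|=\rho_{a,\lambda}^+$ and $\rho_{a,\lambda}^+<\|e_0\|=t_0\sqrt{\lambda_1(f)}$; both hold for every sufficiently small $\rho$, so such a choice exists, and \eqref{*3} follows. The main obstacle — and the point where the argument genuinely departs from Lemmas \ref{MP:p>4} and \ref{MP:p=4} — is this second step: coercivity for $p<4$ forbids pushing a test function off to infinity, so the negativity of $J_{a,\lambda}$ is available only on the bounded band of $t$ singled out by the sharp threshold $\lambda_a^+$, and the one computation requiring care is checking that the exponents produced by minimising $t\mapsto J_{a,\lambda}(t\phi_1)$ reassemble exactly into \eqref{lam-a+}.
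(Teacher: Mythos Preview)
Your proof is correct and follows essentially the same route as the paper: the same Sobolev/eigenvalue lower bound \eqref{u5} for the small sphere, and the same test function $t\phi_1$ (with the same optimal $t_0$) to produce $e_0$, the computation reducing exactly to the threshold $\lambda_a^+$ of \eqref{lam-a+}. The only cosmetic difference is that the paper fixes an explicit $\rho_{a,\lambda}^+$ as the minimum of two concrete quantities (one guaranteeing positivity on the sphere, the other guaranteeing $\rho_{a,\lambda}^+<\|t_0\phi_1\|$), whereas you argue qualitatively that any sufficiently small $\rho$ works; both are fine.
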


\begin{proof}
By $\eqref{r}, \eqref{u5}$ and condition $(D2),$ we deduce that for $\max \left\{ 0,\lambda _{a}^{+}\right\}<\lambda<\lambda_1(f)$ and $\|u\|=\rho_{a,\lambda}^+$
\begin{align*}
J_{a, \lambda}(u)&\geq\frac a4\|u\|^4+\frac12\left(1-\frac{\lambda}{\lambda_1(f)}\right)\|u\|^2-\frac{\|g\|_\infty}{pS_p^p}\|u\|^p\\
\ &\geq \frac a4\rho_{a,\lambda}^4+\frac14\left(1-\frac{\lambda}{\lambda_1(f)}\right)\rho_{a,\lambda}^2>0,
\end{align*}
where
\begin{equation*}
\rho _{a,\lambda }^+:=\min \left\{ \left[ \frac{1}{4}\left( 1-\frac{\lambda }{%
\lambda _{1}(f)}\right) \frac{pS_{p}^{p}}{\Vert g\Vert _{\infty }}\right] ^{%
\frac{1}{p-2}},\left( \frac{(p-2)\int_{\Omega }g\phi _{1}^{p}dx}{2a\lambda
_{1}(f)^{2}}\right) ^{\frac{1}{4-p}}\Vert \phi _{1}\Vert \right\} .
\end{equation*}%
Moreover, we take
\begin{equation}\label{t0+}
t_{0}:=\left( \frac{(2p-4)\int_{\Omega }g\phi _{1}^{p}dx}{ap\lambda
_{1}(f)^{2}}\right) ^{\frac{1}{4-p}}.
\end{equation}
Then $\|t_0\phi_1\|>\rho_{a,\lambda}^+$ and we deduce that
\begin{align*}
J_{a,\lambda }(t_{0}\phi _{1})& =\frac{t_{0}^{2}}{2}\left( \lambda
_{1}(f)-\lambda \right) +t_{0}^{p}\left( \frac{a}{4}\lambda
_{1}(f)^{2}t_{0}^{4-p}-\frac{\int_{\Omega }g\phi _{1}^{p}dx}{p}\right)  \\
\ & =\frac{t_{0}^{2}}{2}\left( \lambda _{1}(f)-\lambda -\frac{%
(4-p)\int_{\Omega }g\phi _{1}^{p}dx}{p}\left( \frac{(2p-4)\int_{\Omega
}g\phi _{1}^{p}dx}{ap\lambda _{1}(f)^{2}}\right) ^{\frac{p-2}{4-p}}\right)
\\
& =\frac{t_{0}^{2}}{2}\left( \lambda _{a}^{+}-\lambda \right) <0\text{ for
all }\lambda >\max \left\{ 0,\lambda _{a}^{+}\right\} .
\end{align*}%
This completes the proof.
\end{proof}

According to Lemma \ref{MP:p<4+}, the functional $J_{a,\lambda}$ always has the mountain pass geometry in a left neighbourhood of $\lambda_1(f)$ for all $a>0$ when $\int_\Omega g\phi_1^pdx>0$.
The mountain pass type solution is presented in Figure \ref{fg:p3+} which is the lower branch of each curve.

Next, we study the mountain pass geometry of $J_{a,\lambda}$ for $2<p<\min\{4, 2^*\}$ when $\int_\Omega g\phi_1^pdx<0$. We recall the following which is defined in \eqref{Ga-p}:
\begin{equation*}
\Gamma_p:=\sup\left\{\frac{\int_\Omega g|u|^pdx}{\|u\|^p} : u\in H^1_0(\Omega)\setminus\{0\}, \int_\Omega fu^2dx\geq0\right\}. 
\end{equation*}
Then we have the following results.
\begin{Prop}\label{P:Ga-p}
Suppose that $N\geq1$, $2<p<\min\{4, 2^*\}$ and conditions $(D1)-(D2)$ hold. Then $0<\Gamma_p\leq \|g\|_\infty S_p^{-p}$, and $\Gamma_p$ is attained.
\end{Prop}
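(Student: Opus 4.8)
The plan is to establish the three assertions in turn: that $\Gamma_p$ is finite with the stated bound, that $\Gamma_p>0$, and that the supremum is attained. The first two are essentially recorded in the discussion preceding \eqref{Ga-p}: the bound $\Gamma_p\leq\|g\|_\infty S_p^{-p}$ follows from $\int_\Omega g|u|^pdx\leq\|g\|_\infty\|u\|_p^p\leq\|g\|_\infty S_p^{-p}\|u\|^p$ using the Sobolev embedding \eqref{r}, valid on the whole admissible set; and positivity follows because, under $(D1)$--$(D2)$, there is a test function $\varphi\in H^1_0(\Omega)$ with $\int_\Omega f\varphi^2dx>0$ and $\int_\Omega g|\varphi|^pdx>0$ (as cited from Proposition 6.2 in \cite{CC}), whence the quotient evaluated at $\varphi$ is strictly positive. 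So the substance of the proposition is attainment.

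For attainment I would take a maximizing sequence $\{u_n\}$ in the admissible set, normalized by $\|u_n\|=1$ using the $0$-homogeneity of the quotient; note the constraint $\int_\Omega fu_n^2dx\geq0$ is preserved under scaling. Then $\{u_n\}$ is bounded in $H^1_0(\Omega)$, so up to a subsequence $u_n\rightharpoonup u$ weakly in $H^1_0(\Omega)$, $u_n\to u$ strongly in $L^p(\Omega)$ (by the compact Sobolev embedding, available since $p<2^*$) and in $L^2(\Omega)$, and $u_n\to u$ a.e. The strong $L^p$ convergence, together with $g\in L^\infty(\Omega)$, gives $\int_\Omega g|u_n|^pdx\to\int_\Omega g|u|^pdx$, which equals $\Gamma_p>0$ in the limit; in particular $u\neq0$. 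The strong $L^2$ convergence gives $\int_\Omega fu_n^2dx\to\int_\Omega fu^2dx\geq0$, so $u$ lies in the admissible set. By weak lower semicontinuity of the norm, $\|u\|\leq\liminf\|u_n\|=1$, hence
$$\frac{\int_\Omega g|u|^pdx}{\|u\|^p}\geq\int_\Omega g|u|^pdx=\Gamma_p,$$
and since the reverse inequality holds by definition of $\Gamma_p$, the supremum is attained at $u$ (and in fact $\|u\|=1$).

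The one genuine subtlety — the step I expect to be the main obstacle — is ensuring the weak limit $u$ is nonzero and that the $L^2$ limit of $\int_\Omega fu_n^2dx$ stays nonnegative; both are handled cleanly by the compactness of the embeddings $H^1_0(\Omega)\hookrightarrow L^p(\Omega)$ and $H^1_0(\Omega)\hookrightarrow L^2(\Omega)$, which hold because $2,p<2^*$ on a bounded domain, so no concentration-compactness issues arise. A minor point worth a line is that the constraint set is not weakly closed in general (the inequality $\int_\Omega fu^2dx\geq0$ could in principle fail in the limit), but strong $L^2$ convergence rules this out. This completes the proof.
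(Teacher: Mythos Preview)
Your proposal is correct and follows essentially the same route as the paper: normalize a maximizing sequence to $\|u_n\|=1$, pass to a weak limit using compact Sobolev embeddings to carry the $L^p$ and $L^2$ integrals to the limit (giving $u\neq0$ and $\int_\Omega fu^2dx\geq0$), and conclude attainment via weak lower semicontinuity of the norm. The paper phrases the last step as a contradiction argument (if $\|u_0\|<1$ then rescaling beats $\Gamma_p$), which is just the contrapositive of your direct inequality $\int_\Omega g|u|^pdx/\|u\|^p\geq\Gamma_p$.
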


\begin{proof}
Under conditions $(D1)-(D2)$, we can choose a function $\varphi\in H^1_0(\Omega)$ such that $\int_\Omega f\varphi^2dx>0$ and $\int_\Omega g|\varphi|^pdx>0$ (see Proposition 6.2 in \cite{CC} for more details). Hence $\Gamma_p>0$. Then by Sobolev inequality, we deduce that $\Gamma_p\leq \|g\|_\infty S_p^{-p}$.

It is clear that
$$\Gamma_p=\sup\left\{\int_\Omega g|u|^pdx : u\in H^1_0(\Omega), \|u\|=1, \int_\Omega fu^2dx\geq0\right\}.$$
Thus, let $\{u_n\}$ be a maximizing sequence for $\Gamma_p$ with $\|u_n\|=1$ for all $n$. Then there exist a subsequence $\{u_n\}$ and $u_0\in H^1_0(\Omega)$ such that $u_n \rightharpoonup u_0$ in $H^1_0(\Omega)$ and
\begin{equation}\label{Ga-p:1}
u_n\to u_0\ \text{in $L^r(\Omega)$ for all $1\leq r<2^*$.}
\end{equation}
By conditions $(D1)-(D2)$ and \eqref{Ga-p:1}, we have
$$\int_\Omega fu_0^2dx=\lim_{n\to\infty}\int_\Omega fu_n^2dx\geq0,$$
and
\begin{equation}\label{Ga1}
\int_\Omega g|u_0|^pdx=\lim_{n\to\infty}\int_\Omega g|u_n|^pdx=\Gamma_p.
\end{equation}
By \eqref{Ga1} and $\Gamma_p>0$, we conclude that $u_0\neq0$.
We next show that $u_n\to u_0$ in $H^1_0(\Omega)$. If not, we have $\|u_0\|<\liminf_{n\to\infty}\|u_n\|=1$.
Let $v_0=\frac{u_0}{\|u_0\|}$. Then $\|v_0\|=1$ and $\int_\Omega fv_0^2dx\geq0$. However,
$$\int_\Omega g|v_0|^pdx=\frac{\int_\Omega g|u_0|^pdx}{\|u_0\|^p}>\int_\Omega g|u_0|^pdx=\Gamma_p$$
gives a contradiction. Hence $u_n\to u_0$ in $H^1_0(\Omega)$, and $\Gamma_p$ is attained.
\end{proof}

For $2<p<\min\{4,2^*\}$, let $B>0$ be as in \eqref{B}.
For $a,\lambda >0,$ let%
\begin{equation}
\overline{\delta }_{a}=\frac{\left\vert \int_{\Omega }g\phi
_{1}^{p}dx\right\vert }{2^{\frac{p}{2}}p\lambda _{1}(f)^{\frac{p-2}{2}}}\min
\left\{ \rho _{0}^{p-2},\rho _{1, a}^{p-2}\right\}   \label{J20}
\end{equation}%
and
\begin{equation}
\overline{\rho }_{a,\lambda }=\left\{
\begin{array}{ll}
\min \left\{\rho _{\lambda }, \rho _{1, a}\right\} , & \text{ for }%
0<\lambda <\lambda _{1}\left( f\right) , \\
\min \left\{ \rho _{0},\rho _{1,a}\right\} , & \text{ for }\lambda
_{1}\left( f\right) \leq \lambda <\lambda _{1}\left( f\right) +\overline\delta _{a}%
\end{array}%
\right. ,  \label{J21}
\end{equation}%
where
\begin{eqnarray}
\rho _{0} &=&\left( \frac{\lambda _{2}(f)-\lambda _{1}(f)}{2\lambda _{2}(f)}%
\right) ^{\frac{1}{p-2}}\left( \frac{\left\vert \int_{\Omega }g\phi
_{1}^{p}dx\right\vert }{2p\lambda _{1}(f)^{\frac{p}{2}}}+\frac{2^{p-2}\Vert
g\Vert _{\infty }(1+pB^{p})}{pB^{p}S_{p}^{p}}\right) ^{\frac{-1}{p-2}}, \label{rho-0} \\
\rho _{\lambda} &=&\left( \frac14\left(1-\frac{\lambda}{\lambda_1(f)}\right)\frac{pS_{p}^{p}}{\Vert g\Vert _{\infty }}\right) ^{\frac{1}{p-2}}%
\text{ for }0<\lambda <\lambda _{1}\left( f\right) \label{rho-lam},\\
\rho _{1, a} &=&\left( \frac{(p-2)\Gamma _{p}}{ap}\right) ^{\frac{1}{4-p}}.
\end{eqnarray}%
Then we have the following result.

\begin{Lemma}\label{MP:p<4-1}
Suppose that $N\geq1$, $2<p<\min\{4, 2^*\}$ and conditions $(D1)-(D2)$ hold.  If $\int_\Omega g\phi_1^pdx<0$, then for each $0<a<a_0(p)$ and $0<\lambda<\lambda_1(f)+\overline{\delta}_a$, there exists $e_0\in H^1_0(\Omega)$ such that
\begin{equation*}
\|e_0\|>\overline{\rho}_{a,\lambda}\quad\text{and}\quad \inf_{\|u\|=\overline{\rho}_{a,\lambda}}J_{a,\lambda}(u)>0>J_{a, \lambda}(e_0),
\end{equation*}
where $a_0(p)>0$ is as in $\eqref{a0}$.
\end{Lemma}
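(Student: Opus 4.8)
The plan is to follow the same two-step template used in Lemmas \ref{MP:p>4} and \ref{MP:p=4}: first produce a lower bound for $J_{a,\lambda}(u)$ on a sphere $\|u\|=\overline\rho_{a,\lambda}$ that is strictly positive, and then exhibit a ray $t\mapsto t\varphi$ along which $J_{a,\lambda}$ eventually becomes negative with $\|t\varphi\|$ exceeding that sphere. The novelty here, compared to the $p>4$ case, is that the quartic term $\frac a4\|u\|^4$ grows \emph{faster} than the superlinear term $\frac1p\int g|u|^p$ (since $p<4$), so the non-local term helps coercivity but the smallness of $a$ (namely $a<a_0(p)$) is what keeps the descent ray available — this is the reverse balance from Lemma \ref{MP:p>4}.

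For the lower bound I would split into the two $\lambda$-regimes appearing in the definition \eqref{J21} of $\overline\rho_{a,\lambda}$. When $0<\lambda<\lambda_1(f)$, I use \eqref{u5} directly together with the Sobolev bound $\int g|u|^p dx\le \|g\|_\infty S_p^{-p}\|u\|^p$ to get
$$J_{a,\lambda}(u)\ge \tfrac a4\|u\|^4+\tfrac12\Bigl(1-\tfrac{\lambda}{\lambda_1(f)}\Bigr)\|u\|^2-\tfrac{\|g\|_\infty}{pS_p^p}\|u\|^p,$$
and since $p<4$ the right-hand side is positive on a sphere of radius $\rho_\lambda$ (the value in \eqref{rho-lam}) as soon as we also stay below $\rho_{1,a}$ so the quartic term does not overtake — hence the choice $\min\{\rho_\lambda,\rho_{1,a}\}$. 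When $\lambda_1(f)\le\lambda<\lambda_1(f)+\overline\delta_a$, the coefficient $1-\lambda/\lambda_1(f)$ is non-positive, so I instead use the decomposition $u=t\phi_1+w$ and the sharper estimate \eqref{u4}, exactly as in the proof of Lemma \ref{MP:p>4}(ii): the mean value theorem (as in \eqref{g1}) and Young's inequality (as in \eqref{g2}, with the same constant $B$ from \eqref{B}) convert $\frac1p\int g|t\phi_1+w|^p$ into a favourable term $\frac{|\int g\phi_1^p dx|}{2p}|t|^p$ plus a controllable $\|w\|^p$ remainder. Bounding $|t|^p$ below by a multiple of $\|u\|^p-\|w\|^p$ via \eqref{u1}, and using $\Theta\ge (\lambda_2(f)-\lambda_1(f))/(2\lambda_2(f))>0$ to absorb the $\|w\|$ terms for $\|w\|\le\rho_0$, leaves a positive quantity on the sphere of radius $\overline\rho_{a,\lambda}=\min\{\rho_0,\rho_{1,a}\}$ precisely when $\lambda<\lambda_1(f)+\overline\delta_a$ with $\overline\delta_a$ as in \eqref{J20}; the extra constraint $\|u\|\le\rho_{1,a}$ again guards against the quartic term, and this is where $a<a_0(p)$ enters to ensure $\rho_{1,a}$ is not too small.

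For the descent ray I would invoke Proposition \ref{P:Ga-p}: pick $\varphi\in H^1_0(\Omega)$ with $\|\varphi\|=1$ attaining (or nearly attaining) $\Gamma_p$, so $\int g|\varphi|^p dx=\Gamma_p>0$ and $\int f\varphi^2 dx\ge0$. Then
$$J_{a,\lambda}(t\varphi)=\frac a4 t^4+\Bigl(\frac12-\frac{\lambda}{2}\int_\Omega f\varphi^2 dx\Bigr)t^2-\frac{\Gamma_p}{p}t^p,$$
and since $p<4$ the $-\frac{\Gamma_p}{p}t^p$ term dominates the $\frac a4 t^4$ term on an intermediate range of $t$ — but only if $a$ is small enough that this window exists, i.e. $a<a_0(p)$; the threshold \eqref{a0} is exactly the value at which $\min_{t>0}\bigl(\frac a4 t^{4-p}-\frac{\Gamma_p}{p}\bigr)t^p\cdot t^{-2}$ type balance fails at the critical $\lambda$. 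Choosing such a $t=t_0$ with $J_{a,\lambda}(t_0\varphi)<0$ and $\|t_0\varphi\|=t_0>\overline\rho_{a,\lambda}$ (enlarging $t_0$ if necessary, which only makes $J$ more negative) completes the argument.

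The main obstacle I anticipate is bookkeeping rather than conceptual: one must check that the \emph{same} radius $\overline\rho_{a,\lambda}$ simultaneously makes the sphere estimate positive and lies below $t_0$, and that the two cut-offs $\rho_{1,a}$ (needed so $\frac a4\|u\|^4$ stays subordinate) and $\rho_0$ or $\rho_\lambda$ (needed for the spectral/Sobolev positivity) are compatible for all $a\in(0,a_0(p))$ and all $\lambda\in(0,\lambda_1(f)+\overline\delta_a)$. Verifying that $\overline\delta_a>0$ under $a<a_0(p)$ — equivalently that $\rho_{1,a}>0$ stays in the right range — and that the constant $B$ from \eqref{B} makes the $\|w\|^p$ coefficient match the one implicitly used in defining $\rho_0$ in \eqref{rho-0}, is the delicate part; it is essentially a repeat of the $p>4$ computation with the roles of the $\rho^2$ and $\rho^{p-2}$ monomials interchanged because now $p-2<2$.
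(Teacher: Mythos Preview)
Your overall strategy matches the paper's: split into $0<\lambda<\lambda_1(f)$ (use \eqref{u5} plus Sobolev) versus $\lambda_1(f)\le\lambda<\lambda_1(f)+\overline\delta_a$ (use the $u=t\phi_1+w$ decomposition and the estimates \eqref{g1}--\eqref{g2} exactly as in Lemma~\ref{MP:p>4}(ii)), and then use the maximizer $\phi_g$ of $\Gamma_p$ from Proposition~\ref{P:Ga-p} for the descent direction.

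However, there is a genuine error in your descent-ray step. You write ``enlarging $t_0$ if necessary, which only makes $J$ more negative'' --- this is false here, and it is precisely the point at which $p<4$ differs from $p>4$. Since the quartic term $\frac a4 t^4$ eventually dominates $-\frac{\Gamma_p}{p}t^p$, one has $J_{a,\lambda}(t\varphi)\to+\infty$ as $t\to\infty$; the set where $J_{a,\lambda}(t\varphi)<0$ is a \emph{bounded} interval, and you cannot push $t_0$ to the right. The paper resolves this by choosing the explicit value
\[
t_a=\Bigl(\tfrac{(2p-4)\Gamma_p}{ap}\Bigr)^{\frac{1}{4-p}}\|\phi_g\|^{-1}
\]
(the minimizer in $t$ of $\frac a4\|\phi_g\|^4t^4-\frac{1}{p}\int_\Omega g|\phi_g|^p\,dx\,t^p$) and checking directly that $\|t_a\phi_g\|=2^{1/(4-p)}\rho_{1,a}>\rho_{1,a}\ge\overline\rho_{a,\lambda}$ while $J_{a,\lambda}(t_a\phi_g)<0$ whenever $a<a_0(p)$.

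This also exposes a misattribution in your sketch. The cap $\rho_{1,a}$ in the definition of $\overline\rho_{a,\lambda}$ is \emph{not} there ``so the quartic term does not overtake'' in the lower bound --- the quartic $\frac a4\|u\|^4$ is a helpful (positive) contribution to that bound in both cases. Its role is the opposite one: it keeps the sphere radius $\overline\rho_{a,\lambda}$ small enough to sit strictly below $\|t_a\phi_g\|$. Likewise, the hypothesis $a<a_0(p)$ enters \emph{only} in the descent computation (it is exactly the condition making $J_{a,\lambda}(t_a\phi_g)<0$ for all $\lambda>0$, using $\int_\Omega f\phi_g^2\,dx\ge0$), not in the sphere positivity; indeed $\rho_{1,a}=\bigl((p-2)\Gamma_p/(ap)\bigr)^{1/(4-p)}\to\infty$ as $a\to0^+$, so small $a$ makes $\rho_{1,a}$ large rather than ``not too small.''
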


\begin{proof}
We firstly show that for each $0<\lambda <\lambda _{1}(f)+\overline{\delta}_{a}$, we
have $\inf_{\Vert u\Vert =\overline{\rho}_{a,\lambda }}J_{a,\lambda }(u)>0$. Now, we
separate this part into two cases:\newline
Case $(I):0<\lambda <\lambda _{1}(f)$. By $\eqref{r},$ $\eqref{u5}$ and
condition $(D2)$, we deduce that for $\Vert u\Vert =\overline{\rho }%
_{a,\lambda },$
\begin{align*}
J_{a,\lambda }(u)& \geq \frac{a}{4}\Vert u\Vert ^{4}+\frac{1}{2}\left( 1-%
\frac{\lambda }{\lambda _{1}(f)}\right) \Vert u\Vert ^{2}-\frac{\Vert g\Vert
_{\infty }}{pS_{p}^{p}}\Vert u\Vert ^{p} \\
& \geq \frac{a}{4}\overline{\rho }_{a,\lambda }^{4}+\frac{1}{4}\left( 1-%
\frac{\lambda }{\lambda _{1}(f)}\right) \overline{\rho }_{a,\lambda }^{2}>0.
\end{align*}%
Case $(II):\lambda _{1}(f)\leq \lambda < \lambda _{1}(f)+\overline{\delta
}_{a}.$ Repeating the same process in $\eqref{J1}-\eqref{J1'}$, we deduce
that
\begin{align}
\ & \quad J_{a,\lambda }(u)  \notag \\
\ & \geq \frac{a}{4}\Vert u\Vert ^{4}-|\theta _{1}|\Vert u\Vert ^{2}+\Theta
\Vert w\Vert ^{2}-\frac{1}{p}\int_{\Omega }g|t\phi _{1}|^{p}dx-\frac{1}{p}%
\left( \int_{\Omega }g(|t\phi _{1}+w|^{p}-|t\phi _{1}|^{p})dx\right)   \notag
\\
\ & \geq \frac{a}{4}\Vert u\Vert ^{4}-|\theta _{1}|\Vert u\Vert ^{2}+\Theta
\Vert w\Vert ^{2}+\frac{|\int_{\Omega }g\phi _{1}^{p}dx|}{2p}|t|^{p}-\frac{%
2^{p-2}\Vert g\Vert _{\infty }(1+pB^{p})}{pB^{p}S_{p}^{p}}\Vert w\Vert ^{p}
\notag \\
\ & \geq \frac{a}{4}\Vert u\Vert ^{4}-|\theta _{1}|\Vert u\Vert ^{2}+\Theta
\Vert w\Vert ^{2}+\frac{|\int_{\Omega }g\phi _{1}^{p}dx|}{2p\lambda _{1}(f)^{%
\frac{p}{2}}}\left( \frac{\Vert u\Vert ^{p}}{2^{\frac{p}{2}-1}}-\Vert w\Vert
^{p}\right) -\frac{2^{p-2}\Vert g\Vert _{\infty }(1+pB^{p})}{pB^{p}S_{p}^{p}}%
\Vert w\Vert ^{p}  \notag \\
\ & =-|\theta _{1}|\Vert u\Vert ^{2}+\frac{|\int_{\Omega }g\phi _{1}^{p}dx|}{%
2^{\frac{p}{2}}p\lambda _{1}(f)^{\frac{p}{2}}}\Vert u\Vert ^{p}+\frac{a}{4}%
\Vert u\Vert ^{4}  \notag \\
\ & \quad \quad +\Vert w\Vert ^{2}\left( \Theta -\left( \frac{|\int_{\Omega
}g\phi _{1}^{p}dx|}{2p\lambda _{1}(f)^{\frac{p}{2}}}+\frac{2^{p-2}\Vert
g\Vert _{\infty }(1+pB^{p})}{pB^{p}S_{p}^{p}}\right) \Vert w\Vert
^{p-2}\right) ,  \label{J31}
\end{align}%
where $\theta _{1}$ and $\Theta $ are as in $\eqref{n}$, $B$ is as in $%
\eqref{B}$, and $u=t\phi _{1}+w$ with $t\in \mathbb{R}$, $w\in
H_{0}^{1}(\Omega )$ and $\int_{\Omega }\nabla w\nabla \phi _{1}dx=0$. Since $%
\lambda \geq \lambda _{1}(f)$, we have
\begin{equation*}
\Theta \geq \frac{\lambda _{2}(f)-\lambda _{1}(f)}{2\lambda _{2}(f)}>0.
\end{equation*}%
Then by $\left( \ref{J21}\right) ,$ we have
\begin{equation}
\Theta -\left( \frac{|\int_{\Omega }g\phi _{1}^{p}dx|}{2p\lambda _{1}(f)^{%
\frac{p}{2}}}+\frac{2^{p-2}\Vert g\Vert _{\infty }(1+pB^{p})}{pB^{p}S_{p}^{p}%
}\right) \Vert w\Vert ^{p-2}\geq 0\quad \text{for all $0\leq \Vert w\Vert $}%
\leq \overline{\rho }_{a,\lambda }.  \label{J32}
\end{equation}%
Moreover,
\begin{equation}
|\theta _{1}|=\left\vert \frac{1}{2}\left( 1-\frac{\lambda }{\lambda _{1}(f)}%
\right) \right\vert \leq \frac{|\int_{\Omega }g\phi _{1}^{p}dx|}{2^{\frac{p}{%
2}+1}p\lambda _{1}(f)^{\frac{p}{2}}}\overline\rho _{a,\lambda }^{p}\quad \text{for
every $\lambda _{1}(f)\leq \lambda <\lambda _{1}(f)+\overline{\delta }_{a}$}.
\label{J33}
\end{equation}%
It follows from $\eqref{J31}-\eqref{J33},$
\begin{equation*}
J_{a,\lambda }(u)\geq \frac{|\int_{\Omega }g\phi _{1}^{p}dx|}{2^{\frac{p}{2}%
+1}p\lambda _{1}(f)^{\frac{p}{2}}}\overline{\rho }_{a,\lambda }^{p}+\frac{a}{%
4}\overline{\rho }_{a,\lambda }^{4}>0
\end{equation*}%
for every $\lambda _{1}(f)\leq \lambda <\lambda _{1}(f)+\overline{\delta }%
_{a}$ and $\Vert u\Vert =\overline{\rho }_{a,\lambda }$. Consequently, for
each $a>0$, we have $\inf_{\Vert u\Vert =\overline{\rho }_{a,\lambda
}}J_{a,\lambda }(u)>0$ for every $0<\lambda <\lambda _{1}(f)+\overline{%
\delta }_{a}.$

Next, we show that there exists $e_{0}\in H_{0}^{1}(\Omega )$ such that $%
\Vert e_{0}\Vert >\overline{\rho}_{a,\lambda }$ and $J_{a,\lambda }(e_{0})<0$. By
Proposition \ref{P:Ga-p}, there exists $\phi _{g}\in H_{0}^{1}(\Omega )$
such that $\Gamma _{p}\Vert \phi _{g}\Vert ^{p}=\int_{\Omega }g|\phi
_{g}|^{p}dx,$ $\int_{\Omega }f\phi _{g}^{2}dx\geq 0$ and%
\begin{equation}
0<a<a_{0}(p):=(2p-4)\left( 4-p\right) ^{\frac{4-p}{p-2}}\left( \frac{%
\int_{\Omega }g|\phi _{g}|^{p}dx}{p\Vert \phi _{g}\Vert ^{p}}\right) ^{\frac{%
2}{p-2}}.  \label{A1}
\end{equation}%
Let
\begin{equation*}
t_{a}=\left( \frac{(2p-4)\Gamma _{p}}{ap}\right) ^{\frac{1}{4-p}}\Vert \phi
_{g}\Vert ^{-1}.
\end{equation*}%
Then by $\left( \ref{J21}\right) $ and $\left( \ref{A1}\right) ,$ we can deduce that $\Vert
t_{a}\phi _{g}\Vert >\overline{\rho }_{a,\lambda }$ and
\begin{align*}
J_{a,\lambda }(t_{a}\phi _{g})& =\frac{t_{a}^{2}}{2}\left( \Vert \phi
_{g}\Vert ^{2}-\lambda \int_{\Omega }f\phi _{g}^{2}dx\right)
+t_{a}^{p}\left( \frac{a}{4}\Vert \phi _{g}\Vert ^{4}t_{a}^{4-p}-\frac{%
\int_{\Omega }g|\phi _{g}|^{p}dx}{p}\right)  \\
\ & =\frac{t_{a}^{2}}{2}\left( \Vert \phi _{g}\Vert ^{2}-\lambda
\int_{\Omega }f\phi _{g}^{2}dx-(4-p)\left( \frac{\int_{\Omega }g|\phi
_{g}|^{p}dx}{p}\right) \left( \frac{(2p-4)\int_{\Omega }g|\phi _{g}|^{p}dx}{%
ap\Vert \phi _{g}\Vert ^{4}}\right) ^{\frac{p-2}{4-p}}\right)  \\
\ & =\frac{t_{a}^{2}}{2}\left( \Vert \phi _{g}\Vert ^{2}-\Vert \phi
_{g}\Vert ^{2}(4-p)\left( \frac{\int_{\Omega }g|\phi _{g}|^{p}dx}{p\Vert
\phi _{g}\Vert ^{p}}\right) ^{\frac{2}{4-p}}\left( \frac{2p-4}{a}\right) ^{%
\frac{p-2}{4-p}}-\lambda \int_{\Omega }f\phi _{g}^{2}dx\right)  \\
& <0\text{ for all }0<a<a_{0}(p).
\end{align*}%
This completes the proof.
\end{proof}

In what follows, we study the case when $a\geq a_0(p)$. It is necessary to consider the following which is defined in \eqref{lam:a-}:
\begin{equation*}
\lambda_a^-:=\inf_{u\in S}\frac{\|u\|^2}{\int_\Omega fu^2dx}\left(1-(4-p)\left(\frac{\int_\Omega g|u|^pdx}{p\|u\|^p}\right)^{\frac{2}{4-p}}\left(\frac{2p-4}{a}\right)^{\frac{p-2}{4-p}}\right)\quad\text{for $a\geq a_0(p)$}. 
\end{equation*}

\begin{Prop}\label{P:lam:a-}
Suppose that $N\geq1$, $2<p<\min\{4, 2^*\}$ and conditions $(D1)-(D2)$ hold. Then $\lambda_a^-$ is nonnegative, continuous and increasing.
Furthermore, if $\int_\Omega g\phi_1^pdx<0$, then
$$\lim_{a\to\infty}\lambda_a^->\lambda_1(f).$$
\end{Prop}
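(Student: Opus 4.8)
The plan is to establish the three regularity-type claims (nonnegativity, continuity, monotonicity) by analysing the structure of the functional defining $\lambda_a^-$, and then treat the asymptotic inequality separately by a judicious choice of test function. For each fixed $u \in S$ write
$$\Psi_u(a) := \frac{\|u\|^2}{\int_\Omega f u^2\,dx}\left(1-(4-p)\left(\frac{\int_\Omega g|u|^p\,dx}{p\|u\|^p}\right)^{\frac{2}{4-p}}\left(\frac{2p-4}{a}\right)^{\frac{p-2}{4-p}}\right),$$
so that $\lambda_a^- = \inf_{u \in S}\Psi_u(a)$. The key observation is that, for $u \in S$ fixed, $\Psi_u$ is a function of $a$ of the form $c_1(1 - c_2 a^{-(p-2)/(4-p)})$ with $c_1, c_2 > 0$; since $2 < p < 4$ the exponent $(p-2)/(4-p)$ is positive, so $a \mapsto a^{-(p-2)/(4-p)}$ is strictly decreasing and $\Psi_u$ is strictly increasing in $a$. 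Thus $\lambda_a^-$, being the pointwise infimum of a family of increasing functions, is increasing. For nonnegativity at $a = a_0(p)$ and beyond: by definition \eqref{a0}, $a_0(p)^{-(p-2)/(4-p)} = (2p-4)^{-1}(4-p)^{-(4-p)/(p-2)\cdot(p-2)/(4-p)}(\Gamma_p/p)^{-2/(4-p)}$, and one checks by a direct substitution that for every $u \in S$ one has $(4-p)(\int_\Omega g|u|^p dx/(p\|u\|^p))^{2/(4-p)}((2p-4)/a_0(p))^{(p-2)/(4-p)} \le (4-p)(\Gamma_p/p)^{2/(4-p)}((2p-4)/a_0(p))^{(p-2)/(4-p)} = 1$, using $\int_\Omega g|u|^p dx/\|u\|^p \le \Gamma_p$ (which holds for $u \in S$ by definition \eqref{Ga-p} of $\Gamma_p$). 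Hence $\Psi_u(a_0(p)) \ge 0$ for all $u \in S$, giving $\lambda_{a_0(p)}^- \ge 0$, and monotonicity then yields $\lambda_a^- \ge 0$ for all $a \ge a_0(p)$.

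For continuity, I would argue that $\lambda_a^-$ is the infimum of the family $\{\Psi_u\}_{u\in S}$, each of which is continuous (indeed smooth) and increasing in $a$; moreover on any compact interval $[a_0(p), M]$ one has a uniform modulus of continuity: $|\Psi_u(a) - \Psi_u(a')|$ is controlled by $c_1 c_2 |a^{-(p-2)/(4-p)} - (a')^{-(p-2)/(4-p)}|$ where the relevant combination $c_1 c_2 = \frac{\|u\|^2}{\int_\Omega f u^2 dx}(4-p)\big(\frac{\int_\Omega g|u|^p dx}{p\|u\|^p}\big)^{2/(4-p)}(2p-4)^{(p-2)/(4-p)}$ is bounded above uniformly over $u \in S$ — the first factor $\|u\|^2/\int_\Omega fu^2 dx$ is bounded below by $\lambda_1(f) > 0$ but one needs a bound from above, which follows because the bracketed quantity in $\Psi_u(a)$ must be positive near the infimum (otherwise $\Psi_u(a) \le 0$, and the infimum is controlled by such terms), forcing $\int_\Omega g|u|^p dx/\|u\|^p$ bounded below away from $0$, hence via $\Gamma_p$-type bounds the ratio $\|u\|^2/\int_\Omega fu^2dx$ is controlled. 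An equicontinuity argument then shows $\lambda_a^-$ is continuous. Alternatively, and perhaps more cleanly, one shows directly that $\lambda_a^-$ is increasing and both upper and lower semicontinuous: upper semicontinuity of an infimum of continuous functions is automatic, and lower semicontinuity follows from the uniform-in-$u$ Lipschitz bound just described on compact $a$-intervals.

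For the asymptotic claim $\lim_{a\to\infty}\lambda_a^- > \lambda_1(f)$ when $\int_\Omega g\phi_1^p\,dx < 0$, the point is that as $a \to \infty$ the correction term $((2p-4)/a)^{(p-2)/(4-p)} \to 0$, so formally $\Psi_u(a) \to \|u\|^2/\int_\Omega fu^2dx$; but one must be careful that the infimum over the (non-closed, $a$-independent) set $S$ does not degenerate. Since $\lambda_a^-$ is increasing, $L := \lim_{a\to\infty}\lambda_a^-$ exists in $(0,\infty]$; I would show $L \ge \inf\{\|u\|^2/\int_\Omega fu^2dx : u \in \overline{S}\}$ in an appropriate sense and then identify this with a constrained eigenvalue-type quantity that exceeds $\lambda_1(f)$ precisely because $\phi_1 \notin \overline{S}$: indeed $\int_\Omega g\phi_1^p dx < 0$ means $\phi_1$ violates the constraint $\int_\Omega g|u|^p dx > 0$ defining $S$, and by a compactness/concentration argument any minimizing sequence staying in $S$ cannot converge to $\phi_1$, so the limiting Rayleigh quotient is strictly above $\lambda_1(f)$. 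Concretely: suppose for contradiction $L \le \lambda_1(f)$; pick $a_n \to \infty$ and $u_n \in S$ with $\|u_n\| = 1$ (by homogeneity) and $\Psi_{u_n}(a_n) \to L$; since the correction term tends to $0$ uniformly over $\|u_n\|=1$ while $\int_\Omega g|u_n|^p dx$ stays bounded, this forces $\|u_n\|^2/\int_\Omega fu_n^2 dx \to L \le \lambda_1(f)$, so $\int_\Omega fu_n^2 dx \to 1/L \ge 1/\lambda_1(f)$; combined with $\|u_n\|=1$ and the variational characterization \eqref{R1.2} this forces $u_n \to \pm\phi_1$ strongly in $H^1_0(\Omega)$, whence $\int_\Omega g|u_n|^p dx \to \int_\Omega g\phi_1^p dx < 0$, contradicting $u_n \in S$ (which requires $\int_\Omega g|u_n|^p dx > 0$). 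Hence $L > \lambda_1(f)$.

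The main obstacle I anticipate is the continuity claim, specifically obtaining the uniform-in-$u$ control needed for lower semicontinuity: the set $S$ is not closed and the coefficient $\|u\|^2/\int_\Omega fu^2dx$ is genuinely unbounded over all of $S$, so the equicontinuity must be localized to those $u$ that are relevant near the infimum, which requires showing that minimizing sequences for $\lambda_a^-$ have $\int_\Omega g|u|^p dx/\|u\|^p$ bounded away from zero — this is the delicate bookkeeping step. The asymptotic inequality, by contrast, reduces cleanly to the strong-convergence rigidity in \eqref{R1.2} together with the sign hypothesis on $\int_\Omega g\phi_1^p dx$.
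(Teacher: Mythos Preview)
Your proposal is correct and, for the asymptotic statement, rests on the same compactness mechanism as the paper: a minimizing sequence with Rayleigh quotient tending to $\lambda_1(f)$ must converge strongly to $\pm\phi_1$, contradicting the constraint $\int_\Omega g|u|^p\,dx>0$ when $\int_\Omega g\phi_1^p\,dx<0$. The organizational difference is that the paper first isolates the $a$-independent quantity
\[
\tilde\Lambda:=\inf\Bigl\{\|u\|^2:\int_\Omega fu^2\,dx=1,\ \int_\Omega g|u|^p\,dx>0\Bigr\},
\]
proves $\tilde\Lambda>\lambda_1(f)$ by exactly your compactness argument, and then concludes via the uniform lower bound $\lambda_a^-\ge\tilde\Lambda\bigl(1-(4-p)(\Gamma_p/p)^{2/(4-p)}((2p-4)/a)^{(p-2)/(4-p)}\bigr)\to\tilde\Lambda$. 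You instead run the contradiction on a diagonal sequence $(a_n,u_n)$. The paper's decoupling is slightly cleaner since it yields an explicit $a$-independent threshold and avoids juggling two limits simultaneously, but your route is equally valid. For the regularity claims, the paper simply declares them easy and omits the argument; your sketch goes further, and your identification of the continuity step as the delicate one is accurate --- your proposed localization (minimizing sequences for $\lambda_a^-$ have $\|u\|^2/\int_\Omega fu^2\,dx$ bounded when $a>a_0(p)$, via the uniform lower bound $\Psi_u(a)\ge\eta(a)\,\|u\|^2/\int_\Omega fu^2\,dx$ with $\eta(a)>0$) is the right fix.
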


\begin{proof}
It is easy to show that $\lambda_a^-$ is nonnegative, continuous and increasing, and thus we omit this part.
It is clear that
\begin{equation*}
\lambda_a^-=\inf_{u\in S'}\|u\|^2\left(1-(4-p)\left(\frac{\int_\Omega g|u|^pdx}{p\|u\|^p}\right)^{\frac{2}{4-p}}\left(\frac{2p-4}{a}\right)^{\frac{p-2}{4-p}}\right),
\end{equation*}
where
$$S'=\left\{u\in H^1_0(\Omega) : \int_\Omega fu^2dx=1, \int_\Omega g|u|^pdx>0\right\}.$$
Thus, by $\eqref{R1.2},$ we have
\begin{equation*}
\tilde\Lambda:=\inf_{u\in S'}\|u\|^2\geq\lambda_1(f).
\end{equation*}
We now claim that
\begin{equation}\label{inf1}
\tilde\Lambda>\lambda_1(f).
\end{equation}
Suppose that this claim is false. Then there exists a sequence $\{u_n\}$ with $\int_\Omega fu_n^2dx=1$ and $\int_\Omega g|u_n|^pdx>0$ such that
$$\lim_{n\to\infty}\|u_n\|^2=\lambda_1(f).$$
Clearly, $\{u_n\}$ is bounded, and thus there exist a subsequence $\{u_n\}$ and $u_0\in H^1_0(\Omega)$ such that
$u_n \rightharpoonup u_0$ in $H^1_0(\Omega)$ and
\begin{equation}\label{.}
u_n\to u_0\quad \text{in}\quad L^r(\Omega)\quad \text{for all}\quad 1\leq r<2^*.
\end{equation}
By conditions $(D1)-(D2)$ and $\eqref{.}$, we have
\begin{equation}\label{..}
\int_\Omega fu_0^2dx=\lim_{n\to\infty}\int_\Omega fu_n^2dx=1
\end{equation}
and
\begin{equation}\label{...}
\int_\Omega g|u_0|^pdx=\lim_{n\to\infty}\int_\Omega g|u_n|^pdx\geq0.
\end{equation}
Next, we show that $u_n\to u_0$ in $H^1_0(\Omega)$. If not, then $\|u_0\|^2<\liminf_{n\to\infty}\|u_n\|^2=\lambda_1(f)$, which is impossible due to $\eqref{R1.2}$ and $\eqref{..}$. Thus, we have $\|u_0\|^2=\lambda_1(f)$, which indicates that $u_0=\pm\phi_1$. Then by \eqref{...}, we obtain
$\int_\Omega g\phi_1^p\geq0$, which is a contradiction. Hence this claim is true.
Moreover, by $\eqref{inf1},$ we can deduce that
$$\lim_{a\to\infty}\lambda_a^-\geq\lim_{a\to\infty}\tilde\Lambda\left(1-(4-p)\left(\frac{\Gamma_p}{p}\right)^{\frac{2}{4-p}}\left(\frac{2p-4}{a}\right)^{\frac{p-2}{4-p}}\right)=\tilde\Lambda>\lambda_1(f).$$
This completes the proof.
\end{proof}

\begin{Prop}\label{P:lam:a2}
Suppose that $N\geq1$, $2<p<\min\{4, 2^*\}$ and conditions $(D1)-(D3)$ hold. Then $\lambda_{a_0(p)}^-=0$. Furthermore, if $\int_\Omega g\phi_1^p<0$, then there is a number ${\bf A}>a_0(p)$ such that $0\leq\lambda_a^-<\lambda_1(f)$ for all $a_0(p)\leq a<{\bf A}$.
\end{Prop}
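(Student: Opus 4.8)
\emph{Proof strategy.} The statement splits into the equality $\lambda_{a_0(p)}^-=0$, which carries all the substance, and the existence of ${\bf A}$, which will follow by a soft continuity argument. For the first part, substituting the definition \eqref{a0} of $a_0(p)$ into \eqref{lam:a-} and simplifying, the factor $\left(\frac{2p-4}{a_0(p)}\right)^{\frac{p-2}{4-p}}$ collapses to $(4-p)^{-1}\left(p/\Gamma_p\right)^{\frac{2}{4-p}}$, so that
$$\lambda_{a_0(p)}^-=\inf_{u\in S}\frac{\|u\|^2}{\int_\Omega fu^2\,dx}\left(1-\left(\frac{\int_\Omega g|u|^p\,dx}{\Gamma_p\,\|u\|^p}\right)^{\frac{2}{4-p}}\right).$$
Every $u\in S$ has $\int_\Omega fu^2\,dx>0\geq0$, so the definition \eqref{Ga-p} of $\Gamma_p$ gives $\int_\Omega g|u|^p\,dx\leq\Gamma_p\|u\|^p$; hence each term in this infimum is nonnegative and $\lambda_{a_0(p)}^-\geq0$ (as also recorded in Proposition \ref{P:lam:a-}). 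It therefore remains to exhibit a single $u\in S$ for which the bracket equals $0$, i.e. a maximiser of $\Gamma_p$ that belongs to $S$.

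\emph{The extremal function lies in $S$.} By Proposition \ref{P:Ga-p} the value $\Gamma_p$ is attained at some $\phi_g\in H^1_0(\Omega)$ with $\|\phi_g\|=1$, $\int_\Omega f\phi_g^2\,dx\geq0$ and $\int_\Omega g|\phi_g|^p\,dx=\Gamma_p>0$; replacing $\phi_g$ by $|\phi_g|$ leaves the quotient unchanged, so I may take $\phi_g\geq0$. This is exactly where $(D3)$ enters: under $(D3)$ one has $\int_\Omega fu^2\,dx\geq0$ for \emph{every} $u\in H^1_0(\Omega)$, so the side constraint in \eqref{Ga-p} is vacuous and $\phi_g$ is a critical point of $u\mapsto\int_\Omega g|u|^p\,dx$ constrained only to the sphere $\|u\|=1$; hence $-\Delta\phi_g=c_0\,g\,\phi_g^{p-1}$ weakly, with Lagrange multiplier $c_0$, and testing with $\phi_g$ and using $\Gamma_p>0$ shows $c_0=\Gamma_p^{-1}>0$. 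By standard subcritical elliptic regularity $\phi_g\in L^\infty(\Omega)$, so, writing $g=g^+-g^-$, the function $\phi_g$ is a nonnegative, nontrivial weak supersolution of $-\Delta v+\big(c_0 g^-\phi_g^{p-2}\big)v=0$ with bounded nonnegative potential, and the strong maximum principle gives $\phi_g>0$ in $\Omega$. Together with $(D1)$ and $(D3)$ this yields $\int_\Omega f\phi_g^2\,dx=\int_{\{f>0\}}f\phi_g^2\,dx>0$, so $\phi_g\in S$, and since $\int_\Omega g|\phi_g|^p\,dx/\|\phi_g\|^p=\Gamma_p$ the corresponding bracket vanishes. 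Therefore $\lambda_{a_0(p)}^-=0$.

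\emph{Existence of ${\bf A}$.} Assume now $\int_\Omega g\phi_1^p\,dx<0$. By Proposition \ref{P:lam:a-} the map $a\mapsto\lambda_a^-$ is continuous, nondecreasing and nonnegative on $[a_0(p),\infty)$ and satisfies $\lim_{a\to\infty}\lambda_a^->\lambda_1(f)$; combined with $\lambda_{a_0(p)}^-=0<\lambda_1(f)$ from the previous step, the intermediate value theorem yields a number ${\bf A}>a_0(p)$ with $\lambda_{\bf A}^-=\lambda_1(f)$, and monotonicity then forces $0\leq\lambda_a^-<\lambda_1(f)$ for all $a_0(p)\leq a<{\bf A}$.

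\emph{Main difficulty.} The only genuine obstacle is the claim in the second paragraph that a maximiser $\phi_g$ of $\Gamma_p$ lies in $S$, i.e. that $\int_\Omega f\phi_g^2\,dx>0$ strictly rather than $=0$: were this integral to vanish, $\phi_g$ would be supported (up to a null set) where $f=0$, and one checks that approximating it within $S$ then forces $\|u\|$ to blow up at precisely the rate that cancels the vanishing of the bracket, so the value $0$ would be neither attained nor approached. Ruling this out is the sole role of $(D3)$: it makes the sign constraint in \eqref{Ga-p} automatic, so $\phi_g$ satisfies a clean elliptic equation, after which the strong maximum principle (equivalently, unique continuation) applies. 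Everything else is routine algebra or a direct appeal to Propositions \ref{P:Ga-p} and \ref{P:lam:a-}.
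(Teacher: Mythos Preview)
Your proof is correct and follows the same approach as the paper: take the maximiser of $\Gamma_p$ from Proposition~\ref{P:Ga-p}, show it lies in $S$, observe that the bracket in \eqref{lam:a-} vanishes there so $\lambda_{a_0(p)}^-=0$, and then invoke the continuity and monotonicity of Proposition~\ref{P:lam:a-} for the existence of~${\bf A}$. The only difference is that the paper asserts $\int_\Omega f u_0^2\,dx>0$ in one line ``by condition $(D3)$'', whereas your Euler--Lagrange plus strong-maximum-principle argument actually justifies this (it is not immediate, since a priori the maximiser could vanish on $\{f>0\}$); so your write-up is in fact more complete than the paper's at this step.
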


\begin{proof}
By Proposition \ref{P:Ga-p}, there exists $u_0\in H^1_0(\Omega)$ with $\int_\Omega fu_0^2dx\geq 0$ such that $\Gamma_p\|u_0\|^p=\int_\Omega g|u_0|^pdx$. Then by condition $(D3),$ we conclude that $\int_\Omega fu_0^2dx>0$.
Hence
$$0\leq\lambda_{a_0(p)}^-\leq \frac{\|u_0\|^2}{\int_\Omega fu_0^2dx}\left(1-(4-p)\left(\frac{\int_\Omega g|u_0|^pdx}{p\|u_0\|^p}\right)^{\frac{2}{4-p}}\left(\frac{2p-4}{a_0(p)}\right)^{\frac{p-2}{4-p}}\right)=0.$$
Then by Proposition \ref{P:lam:a-}, if $\int_\Omega g\phi_1^pdx<0$, we can deduce that there exists one number ${\bf A}>a_0(p)$ such that $0\leq\lambda_a^-<\lambda_1(f)$ for every $a_0(p)\leq a<{\bf A}$.
This completes the proof.
\end{proof}

\begin{Remark}
By Proposition \ref{P:lam:a2} and the definition of $\lambda_a^-$, for every $a_0(p)\leq a<{\bf A}$ and $\lambda_a^-<\lambda<\lambda_1(f)$, there exists $\varphi _{a,\lambda}\in S$ such that
\begin{equation}\label{var-1}
\lambda _{a}^{-}\leq\frac{\Vert \varphi _{a,\lambda}\Vert^{2}}{\int_{\Omega }f\varphi _{a,\lambda}^{2}dx}\left( 1-(4-p)\left( \frac{\int_{\Omega}g|\varphi _{a,\lambda}|^{p}dx}{p\Vert \varphi _{a,\lambda}\Vert ^{p}}\right) ^{\frac{2}{4-p}}\left( \frac{2p-4}{a}\right) ^{\frac{p-2}{4-p}}\right) <\lambda.
\end{equation}
Moreover, for all $\lambda\geq\lambda_1(f)$, there exists $\varphi_a \in S$ which is independent of $\lambda$ such that
\begin{equation}\label{var-2}
\lambda _{a}^{-}\leq\frac{\Vert \varphi _{a}\Vert^{2}}{\int_{\Omega }f\varphi _{a}^{2}dx}\left( 1-(4-p)\left( \frac{\int_{\Omega}g|\varphi _{a}|^{p}dx}{p\Vert \varphi _{a}\Vert ^{p}}\right) ^{\frac{2}{4-p}}\left( \frac{2p-4}{a}\right) ^{\frac{p-2}{4-p}}\right) <\lambda_1(f).
\end{equation}
\end{Remark}

For $a\geq a_0(p)$ and $\lambda >\lambda_a^-,$ let%
\begin{equation}
\widehat{\delta}_{a}=\frac{\left\vert \int_{\Omega}g\phi_{1}^{p}dx\right\vert }{2^{\frac{p}{2}}p\lambda _{1}(f)^{\frac{p-2}{2}}}\min
\left\{\rho _{0}^{p-2},\rho _{2, a}^{p-2}\right\}   \label{J22}
\end{equation}%
and
\begin{equation}
\widehat{\rho }_{a,\lambda }=\left\{
\begin{array}{ll}
\min \left\{\rho _{\lambda }, \rho _{a, \lambda}\right\} , & \text{ for }\lambda
_{a}^{-}<\lambda <\lambda _{1}\left( f\right), \\
\min \left\{ \rho _{0},\rho _{2, a}\right\} , & \text{ for }\lambda
_{1}\left( f\right) \leq \lambda <\lambda _{1}\left( f\right) +\widehat{%
\delta }_{a},
\end{array}%
\right.   \label{J23}
\end{equation}%
where $\rho_0$ and $\rho_\lambda$ are as in \eqref{rho-0} and \eqref{rho-lam}, respectively, and
\begin{eqnarray*}
\rho _{a, \lambda} &=&\left( \frac{(p-2)\int_{\Omega }g|\varphi _{a,\lambda}|^{p}dx}{ap\Vert
\varphi _{a,\lambda}\Vert ^{p}}\right) ^{\frac{1}{4-p}}\quad\text{with $\varphi_{a,\lambda}$ as in \eqref{var-1}}, \\
\rho _{2, a} &=&\left( \frac{(p-2)\int_{\Omega }g|\varphi _{a}|^{p}dx}{ap\Vert
\varphi _{a}\Vert ^{p}}\right) ^{\frac{1}{4-p}}\quad\text{with $\varphi_{a}$ as in \eqref{var-2}}.
\end{eqnarray*}%
Then we have the following result.

\begin{Lemma}\label{MP:p<4-2}
Suppose that $N\geq 1,$ $2<p<\min \{4,2^{\ast }\}$ and
conditions $(D1)-(D3)$ hold. Let $\mathbf{A>0}$ be as in Proposition \ref%
{P:lam:a2} and $\int_{\Omega }g\phi _{1}^{p}dx<0.$ Then
for each $a_{0}(p)\leq a<\mathbf{A}$ and $\lambda _{a}^{-}<\lambda <\lambda
_{1}(f)+\widehat{\delta }_{a}$, there exists $e_{0}\in H_{0}^{1}(\Omega )$
such that
\begin{equation*}
\Vert e_{0}\Vert >\widehat{\rho }_{a,\lambda }\quad \text{and}\quad
\inf_{\Vert u\Vert =\widehat{\rho }_{a,\lambda }}J_{a,\lambda
}(u)>0>J_{a,\lambda }(e_{0}).
\end{equation*}
\end{Lemma}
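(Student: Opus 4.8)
The plan is to show that $J_{a,\lambda}$ has the mountain pass geometry by mirroring the proof of Lemma~\ref{MP:p<4-1}, but replacing the role of $\phi_g$ (which required $0<a<a_0(p)$) with the near-minimizers $\varphi_{a,\lambda}$ and $\varphi_a$ from \eqref{var-1} and \eqref{var-2}, which are available precisely when $a\geq a_0(p)$ and $\lambda>\lambda_a^-$. First I would establish the lower bound $\inf_{\|u\|=\widehat\rho_{a,\lambda}}J_{a,\lambda}(u)>0$, splitting into the two cases of \eqref{J23}. For $\lambda_a^-<\lambda<\lambda_1(f)$, since $\lambda<\lambda_1(f)$ the estimate \eqref{u5} together with \eqref{r} and $(D2)$ gives
$$J_{a,\lambda}(u)\geq\frac a4\|u\|^4+\frac12\left(1-\frac{\lambda}{\lambda_1(f)}\right)\|u\|^2-\frac{\|g\|_\infty}{pS_p^p}\|u\|^p,$$
and the choice $\widehat\rho_{a,\lambda}=\min\{\rho_\lambda,\rho_{a,\lambda}\}\leq\rho_\lambda$ (with $\rho_\lambda$ as in \eqref{rho-lam}) makes the right-hand side at least $\frac a4\widehat\rho_{a,\lambda}^4+\frac14(1-\lambda/\lambda_1(f))\widehat\rho_{a,\lambda}^2>0$, exactly as in Case~$(I)$ of Lemma~\ref{MP:p<4-1}. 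For $\lambda_1(f)\leq\lambda<\lambda_1(f)+\widehat\delta_a$, I would repeat verbatim the chain \eqref{J1}--\eqref{J1'} and the subsequent estimate \eqref{J31}: decomposing $u=t\phi_1+w$ with $\int_\Omega\nabla w\nabla\phi_1\,dx=0$, using \eqref{u4}, the mean value theorem and Young's inequality with the constant $B$ of \eqref{B}, to arrive at
$$J_{a,\lambda}(u)\geq-|\theta_1|\|u\|^2+\frac{|\int_\Omega g\phi_1^pdx|}{2^{p/2}p\lambda_1(f)^{p/2}}\|u\|^p+\frac a4\|u\|^4+\|w\|^2\left(\Theta-\left(\frac{|\int_\Omega g\phi_1^pdx|}{2p\lambda_1(f)^{p/2}}+\frac{2^{p-2}\|g\|_\infty(1+pB^p)}{pB^pS_p^p}\right)\|w\|^{p-2}\right).$$
Because $\widehat\rho_{a,\lambda}=\min\{\rho_0,\rho_{2,a}\}\leq\rho_0$ with $\rho_0$ as in \eqref{rho-0}, the $\|w\|^2$-bracket is nonnegative for $\|w\|\leq\widehat\rho_{a,\lambda}$ (the analogue of \eqref{J32}); and the definition \eqref{J22} of $\widehat\delta_a$ is arranged exactly so that $|\theta_1|\leq\frac{|\int_\Omega g\phi_1^pdx|}{2^{p/2+1}p\lambda_1(f)^{p/2}}\widehat\rho_{a,\lambda}^{p}$ for $\lambda<\lambda_1(f)+\widehat\delta_a$ (the analogue of \eqref{J33}), so $J_{a,\lambda}(u)\geq\frac{|\int_\Omega g\phi_1^pdx|}{2^{p/2+1}p\lambda_1(f)^{p/2}}\widehat\rho_{a,\lambda}^{p}+\frac a4\widehat\rho_{a,\lambda}^4>0$.

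Next I would produce the point $e_0$ with $\|e_0\|>\widehat\rho_{a,\lambda}$ and $J_{a,\lambda}(e_0)<0$. Here the two subcases of \eqref{J23} call for the two different test functions. For $\lambda_a^-<\lambda<\lambda_1(f)$, take $e_0=t_{a,\lambda}\varphi_{a,\lambda}$ with $t_{a,\lambda}=\left(\frac{(2p-4)\int_\Omega g|\varphi_{a,\lambda}|^p dx}{ap\|\varphi_{a,\lambda}\|^4}\right)^{1/(4-p)}$; for $\lambda\geq\lambda_1(f)$, take $e_0=t_a\varphi_a$ with the analogous $t_a$ built from $\varphi_a$. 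In either case, scaling along the ray $t\mapsto t\varphi$ gives
$$J_{a,\lambda}(t\varphi)=\frac{t^2}{2}\left(\|\varphi\|^2-\lambda\int_\Omega f\varphi^2dx\right)+t^p\left(\frac a4\|\varphi\|^4t^{4-p}-\frac{\int_\Omega g|\varphi|^pdx}{p}\right),$$
and evaluating at the distinguished $t$ (which is precisely where the bracket $\frac a4\|\varphi\|^4t^{4-p}-\frac1p\int_\Omega g|\varphi|^pdx$ equals half its value, as in the computation in Lemma~\ref{MP:p<4-1}) collapses the expression to $\frac{t^2}{2}\big(\Lambda_\varphi-\lambda\big)$ times $\int_\Omega f\varphi^2dx$, where $\Lambda_\varphi$ is exactly the quantity appearing inside the infimum defining $\lambda_a^-$ evaluated at $\varphi$. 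By \eqref{var-1} this quantity is $<\lambda$ in the first subcase, and by \eqref{var-2} it is $<\lambda_1(f)\leq\lambda$ in the second, so $J_{a,\lambda}(e_0)<0$ in both. Finally, the norm comparison $\|t_{a,\lambda}\varphi_{a,\lambda}\|>\widehat\rho_{a,\lambda}$ (resp. $\|t_a\varphi_a\|>\widehat\rho_{a,\lambda}$) follows from the definitions $\rho_{a,\lambda}=\left(\frac{(p-2)\int_\Omega g|\varphi_{a,\lambda}|^pdx}{ap\|\varphi_{a,\lambda}\|^p}\right)^{1/(4-p)}$ and $\rho_{2,a}$ together with $\widehat\rho_{a,\lambda}=\min\{\rho_\lambda\text{ or }\rho_0,\;\rho_{a,\lambda}\text{ or }\rho_{2,a}\}$: one checks $\|t\varphi\|=\left(\frac{2p-4}{p-2}\right)^{1/(4-p)}\rho_{\star}>\rho_\star\geq\widehat\rho_{a,\lambda}$ since $\frac{2p-4}{p-2}=2>1$.

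The step I expect to require the most care is not any single estimate but the \emph{bookkeeping that the various radii and $\widehat\delta_a$ are mutually consistent}: one must verify that, with $\varphi_{a,\lambda}\in S$ (resp. $\varphi_a\in S$) supplied by \eqref{var-1}--\eqref{var-2}, the radius $\rho_{a,\lambda}$ (resp. $\rho_{2,a}$) used to build $e_0$ is compatible with the radius $\widehat\rho_{a,\lambda}$ on which the positive lower bound is proved, and that the threshold $\widehat\delta_a$ in \eqref{J22} — which is defined using $\rho_{2,a}$, hence depends only on $\varphi_a$ and not on $\lambda$ — indeed controls $|\theta_1|$ uniformly on the whole interval $\lambda_1(f)\leq\lambda<\lambda_1(f)+\widehat\delta_a$. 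A subtle point is that in the subcase $\lambda_1(f)\leq\lambda<\lambda_1(f)+\widehat\delta_a$ the mountain-pass point is $t_a\varphi_a$ (built from the $\lambda$-independent $\varphi_a$ of \eqref{var-2}), so one needs $\|t_a\varphi_a\|>\widehat\rho_{a,\lambda}=\min\{\rho_0,\rho_{2,a}\}$, which is exactly why $\rho_{2,a}$ rather than $\rho_{a,\lambda}$ enters \eqref{J23} and \eqref{J22} on that interval; keeping this correspondence straight is the crux. Everything else is a direct transcription of the arguments already carried out in Lemmas~\ref{MP:p<4-1} and \ref{MP:p<4+}.
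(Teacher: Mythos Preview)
Your proposal is correct and follows essentially the same route as the paper's own proof: the same two-case split according to \eqref{J23}, the same lower-bound estimates (via \eqref{u5} for $\lambda<\lambda_1(f)$ and via the decomposition \eqref{J31}--\eqref{J33} for $\lambda\geq\lambda_1(f)$), and the same mountain-pass points $e_0=t_{a,\lambda}\varphi_{a,\lambda}$ or $e_0=t_a\varphi_a$ with the identical scaling computations. Your conceptual rewriting of $J_{a,\lambda}(e_0)$ as $\tfrac{t^2}{2}(\Lambda_\varphi-\lambda)\int_\Omega f\varphi^2\,dx$ is a nice packaging but is exactly the same calculation; note only the minor slip (shared with the paper's \eqref{J33}) that the bound on $|\theta_1|$ should carry the exponent $\widehat\rho_{a,\lambda}^{\,p-2}$ rather than $\widehat\rho_{a,\lambda}^{\,p}$, consistent with the definition \eqref{J22}.
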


\begin{proof}
We separate the proof into two cases.\newline
Case $(I):\lambda _{a}^{-}<\lambda <\lambda _{1}(f)$. By $\eqref{r},$ $%
\eqref{u5}$ and condition $(D2)$, we deduce that for $\Vert u\Vert =\widehat{%
\rho }_{a,\lambda },$%
\begin{align*}
J_{a,\lambda }(u)& \geq \frac{a}{4}\Vert u\Vert ^{4}+\frac{1}{2}\left( 1-%
\frac{\lambda }{\lambda _{1}(f)}\right) \Vert u\Vert ^{2}-\frac{\Vert g\Vert
_{\infty }}{pS_{p}^{p}}\Vert u\Vert ^{p} \\
& \geq \frac{a}{4}\widehat{\rho }_{a,\lambda }^{4}+\frac{1}{4}\left( 1-\frac{%
\lambda }{\lambda _{1}(f)}\right) \widehat{\rho }_{a,\lambda }^{2}>0.
\end{align*}%
Let
\begin{equation*}
t_{a,\lambda }:=\left( \frac{(2p-4)\int_{\Omega }g|\varphi _{a, \lambda }|^{p}dx}{%
ap\Vert \varphi _{a, \lambda }\Vert ^{4}}\right) ^{\frac{1}{4-p}}.
\end{equation*}%
Then by \eqref{var-1} and $\left( \ref{J23}\right) ,$ we deduce that $\Vert t_{a,\lambda }\varphi _{a, \lambda}\Vert >\widehat{\rho }_{a,\lambda }$ and
\begin{align*}
J_{a,\lambda }(t_{a,\lambda }\varphi _{a, \lambda })& =\frac{t_{a,\lambda }^{2}}{2}%
\left( \Vert \varphi _{a, \lambda }\Vert ^{2}-\lambda \int_{\Omega }f\varphi
_{a, \lambda }^{2}dx\right) +t_{a,\lambda }^{p}\left( \frac{a}{4}\Vert \varphi
_{a, \lambda }\Vert ^{4}t_{a,\lambda }^{4-p}-\frac{\int_{\Omega }g|\varphi
_{a,\lambda }|^{p}dx}{p}\right)  \\
& =\frac{t_{a,\lambda }^{2}}{2}\left( \Vert \varphi _{a,\lambda }\Vert
^{2}-\lambda \int_{\Omega }f\varphi _{a,\lambda }^{2}dx-\frac{(4-p)\int_{\Omega
}g|\varphi _{a,\lambda }|^{p}dx}{p}\left( \frac{(2p-4)\int_{\Omega }g|\varphi
_{a,\lambda }|^{p}dx}{ap\Vert \varphi _{a,\lambda }\Vert ^{4}}\right) ^{\frac{p-2}{%
4-p}}\right)  \\
& <0.
\end{align*}%
Case $(II): \lambda _{1}(f)\leq\lambda<\lambda_1(f)+\widehat{\delta }_{a}$. Repeating the same process in $%
\eqref{J31}-\eqref{J33}$, we can deduce that for each $\lambda _{1}(f)\leq
\lambda <\lambda _{1}(f)+\widehat{\delta }_{a}$ and $\Vert u\Vert =\widehat{%
\rho }_{a,\lambda },$ we have%
\begin{equation*}
J_{a,\lambda }(u)\geq \frac{\left\vert \int_{\Omega }g\phi
_{1}^{p}dx\right\vert }{2^{\frac{p}{2}+1}p\lambda _{1}(f)^{\frac{p}{2}}}%
\widehat{\rho }_{a,\lambda }^{p}+\frac{a}{4}\widehat{\rho }_{a,\lambda }^{4}.
\end{equation*}%
Let
\begin{equation*}
t_{a}=:\left( \frac{(2p-4)\int_{\Omega }g|\varphi _{a}|^{p}dx}{ap\Vert \varphi
_{a}\Vert ^{4}}\right) ^{\frac{1}{4-p}}.
\end{equation*}%
Then by \eqref{var-2} and $\left( \ref{J23}\right) ,$ we deduce that $\Vert t_{a}\varphi _{a}\Vert >\widehat\rho
_{a,\lambda }$ and%
\begin{align*}
J_{a,\lambda }(t_{a}\varphi _{a})& =\frac{t_{a}^{2}}{2}\left( \Vert \varphi
_{a}\Vert ^{2}-\lambda \int_{\Omega }f\varphi _{a}^{2}dx\right)
+t_{a}^{p}\left( \frac{a}{4}\Vert \varphi _{a}\Vert ^{4}t_{a}^{4-p}-\frac{%
\int_{\Omega }g|\varphi _{a}|^{p}dx}{p}\right)  \\
& =\frac{t_{a}^{2}}{2}\left( \Vert \varphi _{a}\Vert ^{2}-\lambda \int_{\Omega
}f\varphi _{a}^{2}dx-\frac{(4-p)\int_{\Omega }g|\varphi _{a}|^{p}dx}{p}\left(
\frac{(2p-4)\int_{\Omega }g|\varphi _{a}|^{p}dx}{ap\Vert \varphi _{a}\Vert ^{4}}%
\right) ^{\frac{p-2}{4-p}}\right)  \\
& <0.
\end{align*}%
This complete the proof.
\end{proof}

By Lemmas \ref{MP:p<4-1} and \ref{MP:p<4-2}, the mountain pass geometry of $J_{a,\lambda}$ appears for $\lambda$ in a neighbourhood that contains $\lambda_1(f)$ whenever $0<a<{\bf A}$. The corresponding mountain pass type solution is presented in Figure \ref{fg:p3-} (a)-(b) which is the middle branch of each curve.

Next, we prove that the functional $J_{a,\lambda}$ satisfies the $(PS)_\alpha$-condition.

\begin{Lemma}\label{LPS}
Suppose that $N\geq1$, $2<p<2^*$ and conditions $(D1)-(D2)$ hold. If the $(PS)_\alpha$-sequence for $J_{a,\lambda}$ is bounded, then it has a convergent subsequence.
\end{Lemma}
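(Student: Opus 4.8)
The plan is to carry out the standard compact-embedding argument, taking care to control the nonlocal coefficient $a\|u\|^2+1$. I would start with a bounded $(PS)_\alpha$-sequence $\{u_n\}\subset H^1_0(\Omega)$ for $J_{a,\lambda}$ and extract a subsequence with $u_n\rightharpoonup u$ in $H^1_0(\Omega)$ for some $u\in H^1_0(\Omega)$; by compactness of the embedding $H^1_0(\Omega)\hookrightarrow L^r(\Omega)$ for $1\le r<2^*$ (subcriticality $2<p<2^*$ makes $r=p$ and $r=2$ admissible), this gives $u_n\to u$ in $L^r(\Omega)$ for all such $r$. The remaining task is to promote this to strong convergence in $H^1_0(\Omega)$.

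Next I would test the (vanishing) derivative against $u_n-u$: since $J'_{a,\lambda}(u_n)\to 0$ in the dual of $H^1_0(\Omega)$ and $\{u_n-u\}$ is bounded, $\langle J'_{a,\lambda}(u_n),u_n-u\rangle\to 0$, which after inserting the explicit formula for $J'_{a,\lambda}$ reads
\begin{equation*}
\left(a\|u_n\|^2+1\right)\int_\Omega\nabla u_n\cdot\nabla(u_n-u)\,dx=\lambda\int_\Omega f u_n(u_n-u)\,dx+\int_\Omega g|u_n|^{p-2}u_n(u_n-u)\,dx+o(1).
\end{equation*}
I would then estimate the right-hand side: by H\"older's inequality together with $(D1)$–$(D2)$, the boundedness of $\{u_n\}$ in $L^2(\Omega)$ and $L^p(\Omega)$, and the strong convergence $u_n\to u$ in those spaces, both integrals tend to $0$; hence $\left(a\|u_n\|^2+1\right)\int_\Omega\nabla u_n\cdot\nabla(u_n-u)\,dx\to 0$. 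Since $u_n\rightharpoonup u$ gives $\int_\Omega\nabla u\cdot\nabla(u_n-u)\,dx\to 0$, I can write $\int_\Omega\nabla u_n\cdot\nabla(u_n-u)\,dx=\|u_n-u\|^2+o(1)$, so that $\left(a\|u_n\|^2+1\right)\|u_n-u\|^2\to 0$.

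Finally, because $a\|u_n\|^2+1\ge 1$ for every $n$, this last relation forces $\|u_n-u\|^2\le\left(a\|u_n\|^2+1\right)\|u_n-u\|^2\to 0$, i.e. $u_n\to u$ strongly in $H^1_0(\Omega)$, producing the convergent subsequence. I do not anticipate a serious obstacle: the only delicate point is the superlinear term $\int_\Omega g|u_n|^{p-2}u_n(u_n-u)\,dx$, whose vanishing is exactly where subcriticality $p<2^*$ enters (through compactness of the embedding into $L^p$). The Kirchhoff factor $a\|u_n\|^2+1$ is harmless — indeed helpful — since it is bounded below by $1$ and is simply divided out at the end.
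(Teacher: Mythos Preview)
Your proposal is correct and follows essentially the same approach as the paper: extract a weakly convergent subsequence, use compact Sobolev embeddings (via $(D1)$--$(D2)$ and $p<2^*$) to kill the $f$- and $g$-terms in $\langle J'_{a,\lambda}(u_n),u_n-u\rangle$, and then use that the Kirchhoff coefficient $a\|u_n\|^2+1\geq 1$ to conclude $\|u_n-u\|\to 0$. The paper is slightly more terse, but the argument is identical.
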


\begin{proof}
Let $\{u_n\}\subset H^1_0(\Omega)$ be a bounded $(PS)_\alpha$-sequence for $J_{a,\lambda}$. Then there exist a subsequence $\{u_n\}$ and $u_0\in H^1_0(\Omega)$ such that $u_n \rightharpoonup u_0$ in $H^1_0(\Omega)$ and
\begin{equation}\label{2.7}
u_n\to u_0\quad \text{in}\quad L^r(\Omega)\quad \text{for all}\quad 1\leq r<2^*.
\end{equation}
Since $\langle J_{a,\lambda}'(u_n), \varphi\rangle=o(1)$ for all $\varphi\in H^1_0(\Omega)$, we have
\begin{align*}
o(1)&=\langle J_{a,\lambda}'(u_n), u_n-u_0\rangle\\
\ &=\left(a\|u_n\|^2+1\right)\int_\Omega\nabla u_n\nabla(u_n-u_0)dx-\lambda\int_\Omega fu_n(u_n-u_0)dx-\int_\Omega g|u_n|^{p-2}u_n(u_n-u_0)dx.
\end{align*}
By conditions $(D1)-(D2)$ and $\eqref{2.7}$, we have
$$\int_\Omega fu_n(u_n-u_0)dx\to 0\quad\text{and}\quad\int_\Omega g|u_n|^{p-2}u_n(u_n-u_0)dx\to 0.$$
Thus,
$$\left(a\|u_n\|^2+1\right)\int_\Omega\nabla u_n\nabla(u_n-u_0)dx=o(1),$$
which implies that $u_n\to u_0$ in $H^1_0(\Omega)$. This completes the proof.
\end{proof}

\section{The Proof of Theorem \ref{T:p>4}}

In this section, we present the proof of Theorem \ref{T:p>4} and a discussion on the asymptotic behaviour of the lower branch solutions of case $(ii)$ in the limit $\lambda\to\lambda_1^+(f)$ will also be given after the proof of the main result.

{\bf The proof of Theorem \ref{T:p>4}:} From Lemma \ref{MP:p>4}, for each $a>0$ there exists $\delta_a>0$, with $\delta_a\to\infty$ as $a\to\infty$, such that the functional $J_{a,\lambda}$ has the mountain pass geometry for every $0<\lambda<\lambda_1(f)+\delta_a$. Then by Lemma \ref{LPS}, if the $(PS)_\alpha$-sequence for $J_{a,\lambda}$ is bounded, then there exists $u^+\in H^1_0(\Omega)$ such that $J_{a,\lambda}(u^+)>0$ and $J'_{a,\lambda}(u^+)=0$. Since $J_{a,\lambda}(u^+)=J_{a,\lambda}(|u^+|)$, we assume without loss of generality that $u^+>0$ in $\Omega$.
Thus, we need to show the boundedness of the $(PS)_\alpha$-sequence for $J_{a,\lambda}$.

Suppose that the $(PS)_\alpha$-sequence $\{u_n\}$ is unbounded. Then, as $n\to\infty$, we have $\|u_n\|\to\infty$,
\begin{equation}\label{bd2}
J_{a,\lambda}(u_n)=\frac a4\|u_n\|^4+\frac 12\|u_n\|^2-\frac{\lambda}{2}\int_\Omega fu_n^2dx-\frac1p\int_\Omega g|u_n|^pdx \to \alpha,
\end{equation}
and
\begin{equation}\label{bd3}
\langle J'_{a,\lambda}(u_n), u_n\rangle=a\|u_n\|^4+\|u_n\|^2-\lambda\int_\Omega fu_n^2 dx-\int_\Omega g|u_n|^p dx \to 0.
\end{equation}
Let $v_n=\frac{u_n}{\|u_n\|}$, then $\|v_n\|=1$ and thus there exist a subsequence $\{v_n\}$ and $v_0\in H^1_0(\Omega)$ such that $v_n \rightharpoonup v_0$ in $H^1_0(\Omega)$ and
\begin{equation}\label{dd1}
v_n\to v_0\ \text{in $L^r(\Omega)$ for all $1\leq r<2^*$.}
\end{equation}
By  \eqref{dd1} and condition $(D1)$, we have
\begin{equation}\label{d1}
\lim_{n\to\infty}\int_\Omega fv_n^2dx=\int_\Omega fv_0^2dx.
\end{equation}
Dividing \eqref{bd2} and \eqref{bd3} by $\|u_n\|^4$, because of \eqref{d1}, we obtain that
\begin{equation}\label{bd4}
\frac a4-\frac{\|u_n\|^{p-4}}{p}\int_\Omega g|v_n|^pdx \to 0
\end{equation}
and
\begin{equation}\label{bd6}
a-\|u_n\|^{p-4}\int_\Omega g|v_n|^p dx \to 0.
\end{equation}
This gives a contradiction. Hence $\{u_n\}$ is bounded and we have obtained a positive solution whenever $0<\lambda<\lambda_1(f)+\delta_a$.
Here we have completed the proofs of $(i)$ and the relevant part of $(ii)$.

To complete the proof of case $(ii)$, we consider the infimum of $J_{a,\lambda}$ on the closed ball $B_{\rho_a}:=\{u\in H^1_0(\Omega) : \|u\|\leq\rho_a\}$ with $\rho_a$ as in Lemma \ref{MP:p>4} $(i)$ and $(ii)$.
Set
$$\beta_0:=\inf_{\|u\|\leq\rho_a}J_{a,\lambda}(u).$$
For any $t>0$,
\begin{equation*}
J_{a,\lambda}(t\phi_1)= -\frac {\lambda-\lambda_1(f)}{2}t^2+\frac {a\lambda_1(f)^2}{4}t^4-\frac{\int_\Omega g\phi_1^pdx}{p}t^p.
\end{equation*}
Thus, for each $\lambda>\lambda_1(f)$, there exists $t_0>0$ such that $\|t_0\phi_1\|\leq\rho_a$ and $J_{a,\lambda}(t_0\phi_1)<0$. Moreover,
\begin{align*}
J_{a,\lambda}(u)\geq-\frac{\lambda}{2\lambda_1(f)}\|u\|^2-\frac{\|g\|_\infty}{pS_p^p}\|u\|^p
\geq -\frac12\frac{\lambda}{\lambda_1(f)}\rho_a^2-\frac{\|g\|_\infty}{pS_p^p}\rho_a^p,
\end{align*}
giving $-\infty<\beta_0<0$. By the Ekeland variational principle \cite{E}, there exists a $(PS)_{\beta_0}$-sequence $\{u_n\}\subset B_{\rho_0}$. Then by Lemma \ref{LPS}, there exists $u^-\in H^1_0(\Omega)$ such that $J_{a,\lambda}(u^-)=\beta_0$ and $J'_{a,\lambda}(u^-)=0$. Since $J_{a,\lambda}(u^-)=J_{a,\lambda}(|u^-|)$, we assume without loss of generality that $u^->0$ in $\Omega$. This completes the proof of Theorem \ref{T:p>4}.

We now investigate the asymptotic behaviour of solutions $u^-$ obtained in Theorem \ref{T:p>4} $(ii)$ as $\lambda\to\lambda_1^+(f)$.

\begin{Theorem}\label{T3.1}
For each $a>0$, let $\lambda_n\to\lambda_1^+(f)$ and $u_n^-$ be the solution obtained in Theorem \ref{T:p>4} (ii) with $J_{a,\lambda_n}(u^-_n)<0$. Then, as $n\to\infty$, we have
$$(i)\ u^-_n\to0\quad\quad \text{and}\quad\quad  (ii)\ \frac{u^-_n}{\|u^-_n\|}\to k\phi_1\ \text{in}\ H^1_0(\Omega)\ \text{for some}\ k>0.$$
\end{Theorem}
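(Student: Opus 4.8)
The plan is to exploit that the radius $\rho_a$ in Lemma \ref{MP:p>4} is independent of $\lambda$, so that every $u_n^-$ lies in the fixed ball $B_{\rho_a}$ and $\{u_n^-\}$ is bounded in $H^1_0(\Omega)$. First I would extract a subsequence with $u_n^-\rightharpoonup u_\infty$ in $H^1_0(\Omega)$ and $u_n^-\to u_\infty$ in $L^r(\Omega)$ for all $1\le r<2^*$. Since $\langle J'_{a,\lambda_n}(u_n^-),u_n^--u_\infty\rangle=0$ and the nonlocal factor $a\|u_n^-\|^2+1$ is bounded and bounded away from $0$, the argument of Lemma \ref{LPS} upgrades this to $u_n^-\to u_\infty$ strongly in $H^1_0(\Omega)$. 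Passing to the limit in the weak formulation then shows that $u_\infty$ solves Equation $(K_{a,\lambda_1(f)})$, and by strong convergence together with $\lambda_n\to\lambda_1^+(f)$ we obtain $J_{a,\lambda_1(f)}(u_\infty)=\lim_{n\to\infty}J_{a,\lambda_n}(u_n^-)\le0$.

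The key step is then to show $u_\infty=0$. Testing $(K_{a,\lambda_1(f)})$ with $u_\infty$ gives
$$\int_\Omega g|u_\infty|^pdx=a\|u_\infty\|^4+\|u_\infty\|^2-\lambda_1(f)\int_\Omega fu_\infty^2dx,$$
and inserting this into $J_{a,\lambda_1(f)}(u_\infty)\le0$ yields
$$a\left(\tfrac14-\tfrac1p\right)\|u_\infty\|^4+\left(\tfrac12-\tfrac1p\right)\left(\|u_\infty\|^2-\lambda_1(f)\int_\Omega fu_\infty^2dx\right)\le0.$$
Since $p>4$ both coefficients are strictly positive, and $\|u_\infty\|^2\ge\lambda_1(f)\int_\Omega fu_\infty^2dx$ by \eqref{R1.2}; hence every term on the left is nonnegative, which forces $\|u_\infty\|=0$. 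Because the limit does not depend on the chosen subsequence, the whole sequence converges, i.e. $u_n^-\to0$ in $H^1_0(\Omega)$, which is $(i)$. I expect this identification of the limit to be the main obstacle, since it is precisely here that the regime $p>4$ enters in an essential way, through the signs of $\tfrac14-\tfrac1p$ and $\tfrac12-\tfrac1p$ combined with the variational characterisation \eqref{R1.2}.

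For $(ii)$, note that $u_n^-\ne0$ (because $J_{a,\lambda_n}(u_n^-)<0=J_{a,\lambda_n}(0)$), so $v_n:=u_n^-/\|u_n^-\|$ is well defined with $\|v_n\|=1$; up to a subsequence $v_n\rightharpoonup v_0$ in $H^1_0(\Omega)$ and $v_n\to v_0$ in $L^r(\Omega)$. Dividing the identity $J'_{a,\lambda_n}(u_n^-)=0$ by $\|u_n^-\|$ gives, for every $\varphi\in H^1_0(\Omega)$,
$$\left(a\|u_n^-\|^2+1\right)\int_\Omega\nabla v_n\nabla\varphi\,dx=\lambda_n\int_\Omega fv_n\varphi\,dx+\|u_n^-\|^{p-2}\int_\Omega g|v_n|^{p-2}v_n\varphi\,dx.$$
Since $\|u_n^-\|\to0$ and $p>2$, the last term tends to $0$ and $a\|u_n^-\|^2+1\to1$. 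Testing first with $\varphi=v_n-v_0$ and using the $L^r$-compactness gives $\int_\Omega\nabla v_n\nabla(v_n-v_0)\,dx\to0$, hence $v_n\to v_0$ strongly in $H^1_0(\Omega)$, so $\|v_0\|=1$ and in particular $v_0\ne0$. Letting $n\to\infty$ with a general test function then yields $\int_\Omega\nabla v_0\nabla\varphi\,dx=\lambda_1(f)\int_\Omega fv_0\varphi\,dx$, so $v_0$ is an eigenfunction of \eqref{eq:E} corresponding to $\lambda_1(f)$.

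Finally, since $u_n^->0$ in $\Omega$ we have $v_0\ge0$ and $v_0\ne0$; by the simplicity of the principal eigenvalue $\lambda_1(f)$ (Remark \ref{R1.1}) it follows that $v_0=k\phi_1$ for some $k>0$ (in fact $k=\lambda_1(f)^{-1/2}$, because $\|v_0\|=1$ and $\|\phi_1\|^2=\lambda_1(f)$). Since the normalisation determines $v_0$ uniquely, the convergence holds along the entire sequence, giving $u_n^-/\|u_n^-\|\to k\phi_1$ in $H^1_0(\Omega)$ and completing the proof.
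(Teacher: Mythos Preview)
Your proof is correct and follows essentially the same route as the paper: the boundedness from the $\lambda$-independence of $\rho_a$, the key identity $J_{a,\lambda}(u)=\bigl(\tfrac14-\tfrac1p\bigr)a\|u\|^4+\bigl(\tfrac12-\tfrac1p\bigr)\bigl(\|u\|^2-\lambda\int_\Omega fu^2\,dx\bigr)$ on critical points combined with \eqref{R1.2} to force the limit to vanish, and the rescaling $v_n=u_n^-/\|u_n^-\|$ to identify the direction as $k\phi_1$. The only technical variation is in how strong convergence is obtained: the paper argues by contradiction via weak lower semicontinuity of the norm (using that $\|u_n^-\|^2-\lambda_n\int_\Omega f(u_n^-)^2\,dx<0$), whereas you test the Euler--Lagrange equation against $u_n^--u_\infty$ (respectively $v_n-v_0$) in the style of Lemma~\ref{LPS}; both are standard and lead to the same conclusion.
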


\begin{proof}
$(i)$ According to the proof of Lemma \ref{MP:p>4}, $\rho_a$ is independent of $\lambda$. Thus, $\{u^-_n\}$ is bounded and so there exist a subsequence $\{u^-_n\}$ and $u^-_0\in H^1_0(\Omega)$ such that $u^-_n \rightharpoonup u^-_0$ in $H^1_0(\Omega)$ and
\begin{equation}\label{3.6}
u^-_n\to u^-_0 \text{ in $L^r(\Omega)$ for all $1\leq r<2^*$.}
\end{equation}
Then by \eqref{3.6} and condition $(D1)$, we have
$$\lim_{n\to\infty}\lambda_n\int_\Omega f(u^-_n)^2dx=\lambda_1(f)\int_\Omega f(u^-_0)^2dx.$$
It follows from $J_{a,\lambda_n}(u^-_n)<0$ and $\langle J_{a,\lambda_n}'(u^-_n), u^-_n\rangle=0$ that
\begin{equation}\label{3.7}
J_{a,\lambda_n}(u^-_n)=\left(\frac14-\frac1p\right)a\|u^-_n\|^4+\left(\frac12-\frac1p\right)\left(\|u^-_n\|^2-\lambda_n\int_\Omega f(u^-_n)^2dx\right)<0,
\end{equation}
which indicates that
\begin{equation}\label{3.8}
\|u^-_n\|^2-\lambda_n\int_\Omega f(u^-_n)^2dx<0.
\end{equation}
Suppose that $u^-_n\to u^-_0$ in $H^1_0(\Omega)$ does not hold. Then by \eqref{3.8},
$$\int_\Omega|\nabla u^-_0|^2dx-\lambda_1(f)\int_\Omega f(u^-_0)^2dx <\liminf_{n\to\infty}\left(\|u^-_n\|^2-\lambda_n\int_\Omega f(u^-_n)^2dx\right)\leq0,$$
which is impossible. Hence $u^-_n\to u^-_0$ and $\int_\Omega|\nabla u^-_0|^2dx-\lambda_1(f)\int_\Omega f(u^-_0)^2dx=0$. Then by $\eqref{3.7},$ we conclude $u^-_0=0$. Hence $u^-_n\to0$ in $H^1_0(\Omega)$.

$(ii)$ Let $v_n=\frac{u^-_n}{\|u^-_n\|}$, then there exist a subsequence $\{v_n\}$ and $v_0\in H^1_0(\Omega)$ such that $v_n \rightharpoonup v_0$ in $H^1_0(\Omega)$ and
\begin{equation}\label{3.9}
v_n\to v_0 \text{ in $L^r(\Omega)$ for all $1\leq r<2^*$.}
\end{equation}
By conditions $(D1)-(D2)$ and $\eqref{3.9}$, we have
\begin{equation}\label{3.10}
\lim_{n\to\infty}\lambda_n\int_\Omega fv_n^2dx=\lambda_1(f)\int_\Omega fv_0^2dx\quad\text{and}\quad \lim_{n\to\infty}\int_\Omega g|v_n|^pdx=\int_\Omega g|v_0|^pdx.
\end{equation}
 It follows from $\langle J_{a,\lambda_n}'(u^-_n), u^-_n\rangle=0$, $\|u^-_n\|\to 0$ and \eqref{3.10} that
$$\|v_n\|^2-\lambda_n\int_\Omega fv_n^2dx=\|u^-_n\|^{p-2}\int_\Omega gv_n^pdx-a\|u^-_n\|^2 \to 0.$$
Suppose that $v_n\to v_0$ does not hold. Then
$$\int_\Omega|\nabla v_0|^2dx-\lambda_1(f)\int_\Omega fv_0^2dx<\liminf_{n\to\infty}\left(\|v_n\|^2-\lambda_n\int_\Omega fv_n^2dx\right)=0,$$
which is impossible. Hence $v_n\to v_0$, and then $\|v_0\|=1$ and $\int_\Omega|\nabla v_0|^2dx-\lambda_1(f)\int_\Omega fv_0^2dx=0$. Thus, $v_0=k\phi_1$ for some $k>0$. This completes the proof.
\end{proof}

\section{The Proofs of Theorems \ref{T:p=4+} and \ref{T:p=4-}}

In this section, we give the proofs of Theorems \ref{T:p=4+} and \ref{T:p=4-} and we begin by proving the boundedness of Palais-Smale sequence for the functional $J_{a,\lambda}$ when $p=4$. The asymptotic behaviours of solutions will also be discussed at the end of the section.

\begin{Lemma}\label{BD4}
Suppose that $N=1,2,3$, $p=4$ and conditions $(D1)-(D2)$ hold. Then we have the following results.
\vspace{-8pt}
\begin{enumerate}[(i)]
\item For each $a>0$ and $0<\lambda<\lambda_1(f)$, the $(PS)_\alpha$-sequence $\{u_n\}$ for $J_{a, \lambda}$ is bounded.\itemsep -4pt
\item If $a>\lambda_1(f)^{-2}\int_\Omega g\phi_1^4dx$, then there exists $\delta_2>0$ such that the $(PS)_\alpha$-sequence $\{u_n\}$ for $J_{a, \lambda}$ is bounded whenever $\lambda_1(f)\leq\lambda<\lambda_1(f)+\delta_2$.
\end{enumerate}
\end{Lemma}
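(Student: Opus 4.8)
The plan is to rule out unbounded $(PS)_\alpha$-sequences in both parts, starting from the algebraic observation that, when $p=4$, the quartic terms of $J_{a,\lambda}$ cancel in the combination $J_{a,\lambda}(u)-\tfrac14\langle J'_{a,\lambda}(u),u\rangle$. For a $(PS)_\alpha$-sequence $\{u_n\}$ one has $J_{a,\lambda}(u_n)=\alpha+o(1)$ and $\langle J'_{a,\lambda}(u_n),u_n\rangle=o(\|u_n\|)$, and a direct computation gives
\[
J_{a,\lambda}(u_n)-\tfrac14\langle J'_{a,\lambda}(u_n),u_n\rangle=\tfrac14\Big(\|u_n\|^2-\lambda\!\int_\Omega fu_n^2\,dx\Big)=\alpha+o(1)+o(\|u_n\|).
\]
For part $(i)$, with $0<\lambda<\lambda_1(f)$, inequality \eqref{u5} gives $\|u_n\|^2-\lambda\int_\Omega fu_n^2\,dx\ge\big(1-\lambda/\lambda_1(f)\big)\|u_n\|^2$, so if $\|u_n\|\to\infty$ along a subsequence, dividing the identity above by $\|u_n\|^2$ forces $\tfrac14\big(1-\lambda/\lambda_1(f)\big)\le o(1)$, which is absurd; hence $\{u_n\}$ is bounded.

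For part $(ii)$ I would argue by contradiction: assume $\|u_n\|\to\infty$ and decompose $u_n=t_n\phi_1+w_n$ with $\int_\Omega\nabla w_n\nabla\phi_1\,dx=0$. The first ingredient is a bound on the $w$-component. Inserting \eqref{u4} into the identity above yields
\[
\Big(1-\frac{\lambda}{\lambda_1(f)}\Big)\|u_n\|^2+\lambda\,\frac{\lambda_2(f)-\lambda_1(f)}{\lambda_1(f)\lambda_2(f)}\,\|w_n\|^2\le 4\alpha+o(1)+o(\|u_n\|),
\]
and since $\lambda\ge\lambda_1(f)$ the $\|w_n\|^2$-coefficient is at least $(\lambda_2(f)-\lambda_1(f))/\lambda_2(f)>0$ while the $\|u_n\|^2$-coefficient is nonpositive; rearranging gives $\|w_n\|^2\le\varepsilon(\lambda)\|u_n\|^2+o(\|u_n\|^2)$ with $\varepsilon(\lambda):=\dfrac{(\lambda-\lambda_1(f))\lambda_2(f)}{\lambda_1(f)(\lambda_2(f)-\lambda_1(f))}\to0$ as $\lambda\to\lambda_1^+(f)$. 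So once $\lambda$ lies just above $\lambda_1(f)$, the $w$-component is a controllably small fraction of $u_n$.

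The second ingredient is the quartic estimate. From $\langle J'_{a,\lambda}(u_n),u_n\rangle=o(\|u_n\|)$ and the identity above, $\int_\Omega gu_n^4\,dx=a\|u_n\|^4+4\alpha+o(1)+o(\|u_n\|)$, whereas writing $\int_\Omega gu_n^4\,dx=t_n^4\int_\Omega g\phi_1^4\,dx+\int_\Omega g\big(|t_n\phi_1+w_n|^4-|t_n\phi_1|^4\big)\,dx$ and invoking the mean-value/Young estimate \eqref{g41} — available precisely because $\Phi_4(\phi_1)=a\lambda_1(f)^2-\int_\Omega g\phi_1^4\,dx>0$ under the hypothesis $a>\lambda_1(f)^{-2}\int_\Omega g\phi_1^4\,dx$ — gives $\int_\Omega gu_n^4\,dx\le\frac12\big(a\lambda_1(f)^2+\int_\Omega g\phi_1^4\,dx\big)t_n^4+C\|w_n\|^4$ with $C=4\|g\|_\infty(1+4B_4^4)/(B_4^4S_4^4)$ depending only on $a,g,\phi_1$ (not on $\lambda$). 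Combining the two displays, using $\lambda_1(f)t_n^2\le\|u_n\|^2$ and $\|w_n\|^4\le\varepsilon(\lambda)^2\|u_n\|^4+o(\|u_n\|^4)$, dividing by $\|u_n\|^4$ and passing to the limit superior, one reaches
\[
a\le\frac{\max\{a\lambda_1(f)^2+\int_\Omega g\phi_1^4\,dx,\,0\}}{2\lambda_1(f)^2}+C\,\varepsilon(\lambda)^2,
\]
i.e. $\Phi_4(\phi_1)/(2\lambda_1(f)^2)\le C\varepsilon(\lambda)^2$ when the numerator is positive, and $a\le C\varepsilon(\lambda)^2$ otherwise. Choosing $\delta_2>0$ so small that $C\,\varepsilon(\lambda_1(f)+\delta_2)^2<\min\{a,\ \Phi_4(\phi_1)/(2\lambda_1(f)^2)\}$ (possible since $\varepsilon$ is increasing in $\lambda$ with $\varepsilon(\lambda)\to0$) contradicts this for every $\lambda_1(f)\le\lambda<\lambda_1(f)+\delta_2$, so no unbounded $(PS)_\alpha$-sequence exists there.

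The routine parts are the cancellation identity, the expansion \eqref{g41} specialised to $p=4$ (already carried out in the proof of Lemma \ref{MP:p=4}), and the bookkeeping of the $o(\cdot)$-terms. The genuine obstacle is the quartic remainder $C\|w_n\|^4$ in the bound for $\int_\Omega gu_n^4\,dx$: unlike in the regimes $p>4$ and $p<4$, it cannot simply be absorbed into $a\|u_n\|^4$, since the Sobolev constant $C$ may exceed $a$. The remedy is exactly the two-step mechanism above — the spectral inequality \eqref{u4} forces $\|w_n\|$ to be a small multiple of $\|u_n\|$ when $\lambda$ sits just above $\lambda_1(f)$, and, $C$ being $\lambda$-independent, a sufficiently small $\delta_2$ drives $C\varepsilon(\lambda)^2$ below the positive gap created by the hypothesis $a>\lambda_1(f)^{-2}\int_\Omega g\phi_1^4\,dx$.
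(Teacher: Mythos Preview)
Your proof of part $(i)$ is essentially identical to the paper's. For part $(ii)$ your argument is correct but genuinely different from the paper's. The paper argues by contradiction via a \emph{rescaling/compactness} route: setting $v_n=u_n/\|u_n\|$, it extracts a weak limit $v_0$, uses \eqref{bd44}--\eqref{bd45} and the derivative relation \eqref{bd42} (tested against $v_0$) to show that $\|v_0\|=1$, $\lambda\int_\Omega fv_0^2\,dx=1$ and $\int_\Omega gv_0^4\,dx=a$, and then runs a second compactness argument to show that for $\lambda$ close enough to $\lambda_1(f)$ the sets $\Lambda^0=\{\|u\|=1,\ \|u\|^2=\lambda\int_\Omega fu^2\}$ and $\Theta^0=\{\|u\|=1,\ \int_\Omega gu^4=a\}$ cannot intersect, the limit being forced to $\pm\lambda_1(f)^{-1/2}\phi_1$ and hence violating $a>\lambda_1(f)^{-2}\int_\Omega g\phi_1^4\,dx$. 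Your approach instead works directly with the spectral decomposition $u_n=t_n\phi_1+w_n$: the cancellation identity together with \eqref{u4} controls $\|w_n\|^2/\|u_n\|^2$ by the explicit $\varepsilon(\lambda)$, and then the quartic remainder estimate \eqref{g41} (already available from Lemma~\ref{MP:p=4}, and legitimate precisely because $\Phi_4(\phi_1)>0$) yields an algebraic inequality that fails once $\varepsilon(\lambda)$ is small. Your route is more quantitative --- it produces an explicit $\delta_2$ depending only on $a,g,\phi_1,\lambda_1(f),\lambda_2(f)$ and recycles machinery already set up --- while the paper's route is softer, avoids invoking \eqref{g41}, and trades explicitness for a cleaner compactness argument.
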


\begin{proof}
$(i)$ Since
\begin{equation}\label{bd41}
\frac a4\|u_n\|^4+\frac 12\|u_n\|^2-\frac{\lambda}{2}\int_\Omega fu_n^2dx-\frac14\int_\Omega gu_n^4dx \to \alpha,
\end{equation}
and
\begin{equation}\label{bd42}
\left( a\|u_n\|^2+1\right)\int_\Omega\nabla u_n\nabla\varphi dx-\lambda\int_\Omega fu_n\varphi dx-\int_\Omega gu_n^3\varphi dx \to 0
\end{equation}
for all $\varphi\in H^1_0(\Omega)$. By $\eqref{bd42},$ we also have
\begin{equation}\label{bd43}
a\|u_n\|^4+\|u_n\|^2-\lambda\int_\Omega fu_n^2dx-\int_\Omega gu_n^4dx=o(1).
\end{equation}
Combining $\eqref{bd41}$ and $\eqref{bd43},$ we deduce that
$$\left(1-\frac{\lambda}{\lambda_1(f)}\right)\|u_n\|^2+o(1)\leq \|u_n\|^2-\lambda\int_\Omega fu_n^2dx+o(1)\to 4\alpha,$$
which implies that $\{u_n\}$ is bounded.

$(ii)$ Suppose that the result is false. Then $\|u_n\|\to\infty$ for every $\lambda\geq\lambda_1(f)$. Let $v_n=\frac{u_n}{\|u_n\|}$, then $\|v_n\|=1$ and thus there exist a subsequence $\{v_n\}$ and $v_0\in H^1_0(\Omega)$ such that $v_n \rightharpoonup v_0$ in $H^1_0(\Omega)$ and
\begin{equation}\label{5.1.1}
v_n\to v_0 \text{ in $L^r(\Omega)$ for all $1\leq r<2^*$.}
\end{equation}
By conditions $(D1)-(D2)$ and \eqref{5.1.1}, we have
$$\lim_{n\to\infty}\int_\Omega fv_n^2dx=\int_\Omega fv_0^2dx\quad\text{and}\quad\lim_{n\to\infty}\int_\Omega gv_n^4 dx=\int_\Omega gv_0^4 dx.$$
Combining $\eqref{bd41}$ and $\eqref{bd43},$ we have
\begin{equation}\label{bd44}
\|u_n\|^2-\lambda\int_\Omega fu_n^2dx+o(1)\to 4\alpha
\end{equation}
and
\begin{equation}\label{bd45}
a\|u_n\|^4-\int_\Omega gu_n^4dx+o(1)\to-4\alpha.
\end{equation}
Dividing $\eqref{bd44}$ by $\|u_n\|^2$ and \eqref{bd45} by $\|u_n\|^4$ then gives
$$1=\lambda\int_\Omega fv_0^2dx\quad\text{and}\quad a=\int_\Omega gv_0^4dx.$$
Dividing $\eqref{bd42}$ by $\|u_n\|^3$ and choosing $\varphi=v_0$, we have
$$a\|v_0\|^2=\int_\Omega gv_0^4dx.$$
Thus, $\|v_0\|=1$. Now, we define
$$\Lambda^0:=\left\{u \in H^1_0(\Omega) : \|u\|=1, \|u\|^2-\lambda\int_\Omega fu^2dx=0\right\},$$
and
$$\Theta^0:=\left\{u\in H^1_0(\Omega) : \|u\|=1, \int_\Omega gu^4dx-a\|u\|^4=0\right\}.$$
Clearly, $v_0\in \Lambda^0\cap\Theta^0$. Next, we claim that  there exists $\delta_2>0$ such that for every $\lambda_1(f)\leq\lambda<\lambda_1(f)+\delta_2$, it holds that $\Lambda^0\cap\Theta^0=\emptyset$ which is a contradiction, thus implying the conclusion of $(ii)$.

Suppose that this above claim is false. Then there exist two sequences $\{\lambda_n\}$ and $\{u_n\}\subset H^1_0(\Omega)$ with $\lambda_n\to \lambda_1^+(f)$, $\|u_n\|=1$,
\begin{equation}\label{bd31}
\|u_n\|^2-\lambda_n\int_\Omega fu_n^2dx=0,
\end{equation}
and
\begin{equation}\label{bd32}
\int_\Omega gu_n^4dx-a\|u_n\|^4=0.
\end{equation}
Since $\{u_n\}$ is bounded, there exist a subsequence $\{u_n\}$ and $u_0\in H^1_0(\Omega)$ such that $u_n \rightharpoonup u_0$ in $H^1_0(\Omega)$ and $u_n\to u_0$ in $L^r(\Omega)$ for all $1\leq r<2^*$. By conditions $(D1)-(D2),$ $\eqref{bd31}$ and $\eqref{bd32},$  we then have
\begin{equation}\label{bd33}
\lim_{n\to\infty}\lambda_n\int_\Omega fu_n^2dx=\lambda_1(f)\int_\Omega fu_0^2dx= 1
\end{equation}
and
$$\lim_{n\to\infty}\int_\Omega gu_n^4dx=\int_\Omega gu_0^4dx= a.$$
Now, we show that $u_n\to u_0$ in $H^1_0(\Omega)$. If not, it follows from $\eqref{bd31}$ and $\eqref{bd33}$ that
$$\int_\Omega|\nabla u_0|^2dx-\lambda_1(f)\int_\Omega fu_0^2dx <\liminf_{n\to\infty}\left(\|u_n\|^2-\lambda_n\int_\Omega fu_n^2dx\right)=0,$$
which is impossible. Hence $u_n\to u_0$ in $H^1_0(\Omega)$. It follows from \eqref{bd31} and \eqref{bd32} that
$$ (I)\ \|u_0\|^2=\lambda_1(f)\int_\Omega fu_0^2dx=1\quad;\quad (II) \int_\Omega gu_0^4dx=a.$$
Here $(I)$ gives that $u_0=\pm\lambda_1(f)^{-\frac12}\phi_1$ and then $\int_\Omega gu_0^4dx=\lambda_1(f)^2\int_\Omega g\phi_1^4dx=a$ by $(II)$, which is a contradiction since $a>\lambda_1(f)^{-2}\int_\Omega g\phi_1^4dx$. Thus, the claim above is true and this completes the proof.
\end{proof}

The following two theorems prove the results of Theorems \ref{T:p=4+} and \ref{T:p=4-}.

\begin{Theorem}\label{T4.2}
Suppose that $N=1,2,3$, $p=4$ and conditions $(D1)-(D2)$ hold. Then we have the following results.
\vspace{-8pt}
\begin{enumerate}[(i)]\itemsep-4pt
\item For each $0<a<\Gamma_0$ and $0<\lambda< \lambda_1(f)$, Equation $(K_{a,\lambda})$ has a positive solution $u^+$ with $J_{a, \lambda}(u^+)>0$.
\item For each $\max\{0, \lambda_1(f)^{-2}\int_\Omega g\phi_1^4dx\}<a<\Gamma_0$ and $\lambda=\lambda_1(f)$, Equation $(K_{a,\lambda})$ has a positive solution $u^+$ with $J_{a, \lambda}(u^+)>0$.
\item For each $\max\{0, \lambda_1(f)^{-2}\int_\Omega g\phi_1^4dx\}<a<\Gamma_0$, there exists $\delta_0>0$ such that Equation $(K_{a,\lambda})$ has two positive solutions $u^+$ and $u^-$ with $J_{a, \lambda}(u^-)<0<J_{a, \lambda}(u^+)$ whenever $\lambda_1(f)<\lambda< \lambda_1(f)+\delta_0$.
\end{enumerate}
\end{Theorem}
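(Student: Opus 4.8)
The plan is to produce $u^+$ in all three parts as a mountain-pass critical point, and in part (iii) to obtain the additional solution $u^-$ as a negative-energy local minimizer in the ball carrying the mountain-pass geometry. Throughout I will use, as in the proof of Theorem \ref{T:p>4}, that $J_{a,\lambda}(u)=J_{a,\lambda}(|u|)$, so that the critical points obtained may be taken nonnegative, and that a nontrivial nonnegative solution of $-(a\|u\|^2+1)\Delta u=\lambda fu+g|u|^{p-2}u$ is strictly positive in $\Omega$.

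For part (i), I would fix $0<a<\Gamma_0$ and $0<\lambda<\lambda_1(f)$ and combine Lemma \ref{MP:p=4}(i) (mountain-pass geometry with radius $\rho_\lambda$), Lemma \ref{BD4}(i) (boundedness of $(PS)_\alpha$-sequences) and Lemma \ref{LPS} (compactness of bounded $(PS)_\alpha$-sequences); the Mountain Pass Theorem then yields $u^+$ with $J_{a,\lambda}(u^+)\geq\inf_{\|u\|=\rho_\lambda}J_{a,\lambda}(u)>0$. Part (ii) is identical at $\lambda=\lambda_1(f)$: the hypothesis $\max\{0,\lambda_1(f)^{-2}\int_\Omega g\phi_1^4dx\}<a<\Gamma_0$ reduces to the case handled by Lemma \ref{MP:p=4}(ii) when $\int_\Omega g\phi_1^4dx>0$ and to the case handled by Lemma \ref{MP:p=4}(iii) when $\int_\Omega g\phi_1^4dx\leq0$, so the mountain-pass geometry holds at $\lambda=\lambda_1(f)$; since $a>\lambda_1(f)^{-2}\int_\Omega g\phi_1^4dx$ in both cases, Lemma \ref{BD4}(ii) gives boundedness of $(PS)_\alpha$-sequences there, and Lemma \ref{LPS} closes the argument.

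For part (iii), I would fix $a$ as in the statement and set $\delta_0=\min\{\delta_1,\delta_2\}$, where $\delta_1$ is the constant of Lemma \ref{MP:p=4}(ii)/(iii) and $\delta_2$ that of Lemma \ref{BD4}(ii). The positive-energy solution $u^+$ is produced exactly as in part (i) over the range $\lambda_1(f)<\lambda<\lambda_1(f)+\delta_0$, using the $\lambda$-independent mountain-pass radius $\rho_0$. For $u^-$, let $\beta_0:=\inf_{\|u\|\leq\rho_0}J_{a,\lambda}(u)$. Discarding the nonnegative quartic and quadratic terms and estimating $\int_\Omega fu^2dx\leq\|u\|^2/\lambda_1(f)$ through \eqref{R1.2} shows $\beta_0>-\infty$. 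For $\beta_0<0$, I would test along $t\phi_1$: since $\|\phi_1\|^2=\lambda_1(f)$ and $\int_\Omega f\phi_1^2dx=1$ one has $J_{a,\lambda}(t\phi_1)=\frac14\bigl(a\lambda_1(f)^2-\int_\Omega g\phi_1^4dx\bigr)t^4-\frac12(\lambda-\lambda_1(f))t^2$, whose quadratic term is strictly negative for $\lambda>\lambda_1(f)$, so $J_{a,\lambda}(t\phi_1)<0$ for small $t>0$ with $\|t\phi_1\|\leq\rho_0$; hence $\beta_0<0$. Since $\inf_{\|u\|=\rho_0}J_{a,\lambda}(u)>0>\beta_0$, Ekeland's variational principle yields a $(PS)_{\beta_0}$-sequence inside the ball, which is bounded and therefore, by Lemma \ref{LPS}, converges along a subsequence to $u^-$ with $J_{a,\lambda}(u^-)=\beta_0<0$ and $J'_{a,\lambda}(u^-)=0$; the strict gap $\beta_0<0<\inf_{\|u\|=\rho_0}J_{a,\lambda}(u)$ forces $\|u^-\|<\rho_0$, so $u^-$ is a genuine critical point. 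Passing to absolute values and invoking positivity as above finishes the proof, with $J_{a,\lambda}(u^-)<0<J_{a,\lambda}(u^+)$.

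The main obstacle is the bookkeeping of constants in part (iii): the radius $\rho_0$ on whose boundary the energy is positive must coincide with the radius of the ball in which the minimizer is trapped, and $\delta_0$ must be small enough that both the mountain-pass geometry and the boundedness of Palais--Smale sequences hold at once. The key observations that make this work are that the radius $\rho_0$ of Lemma \ref{MP:p=4} is independent of $\lambda$ and that the hypothesis $a>\lambda_1(f)^{-2}\int_\Omega g\phi_1^4dx$ — which is exactly $\Phi_4(\phi_1)>0$ — is precisely the condition under which both Lemma \ref{MP:p=4}(ii)/(iii) and Lemma \ref{BD4}(ii) apply. The positivity and compactness steps, by contrast, are routine given the earlier lemmas.
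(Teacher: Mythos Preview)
Your proposal is correct and follows essentially the same route as the paper: mountain-pass via Lemmas \ref{MP:p=4}, \ref{BD4} and \ref{LPS} for $u^+$, then Ekeland's principle on the closed ball $B_{\rho_0}$ for $u^-$ in part (iii), with the same test function $t\phi_1$ to show $\beta_0<0$. Your explicit choice $\delta_0=\min\{\delta_1,\delta_2\}$ and the remark that $\|u^-\|<\rho_0$ (forcing $u^-$ to be an interior, hence genuine, critical point) are details the paper leaves implicit but which are entirely in line with its argument.
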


\begin{proof}
$(i)$ By Lemmas \ref{MP:p=4}, \ref{LPS} and \ref{BD4} $(i)$, for each $0<a<\Gamma_0$ and $0<\lambda<\lambda_1(f)$, there exists $u^+\in H^1_0(\Omega)$ such that $J_{a,\lambda}(u^+)>0$ and $J'_{a,\lambda}(u^+)=0$. Since $J_{a,\lambda}(u^+)=J_{a,\lambda}(|u^+|)$, which, without loss of generality, allows for the solution $u^+>0$ in $\Omega$.

$(ii)$-$(iii)$ The following argument proves $(ii)$ and the relevant part of $(iii)$. By Lemmas \ref{MP:p=4} $(ii)$-$(iii)$, \ref{LPS} and \ref{BD4} $(ii)$, for each $\max\{0, \lambda_1(f)^{-2}\int_\Omega g\phi_1^4dx\}<a<\Gamma_0$, there exists $\delta_0>0$ such that for every $\lambda_1(f)\leq\lambda<\lambda_1(f)+\delta_0$, there exists $u^+\in H^1_0(\Omega)$ such that $J_{a,\lambda}(u^+)>0$ and $J'_{a,\lambda}(u^+)=0$. Since $J_{a,\lambda}(u^+)=J_{a,\lambda}(|u^+|)$, we may take $u^+>0$ in $\Omega$. This concludes the proof here. 

To prove the remaining part of $(iii)$, we consider the infimum of $J_{a,\lambda}$ on the closed ball $B_{\rho_0}:=\{u\in H^1_0(\Omega) : \|u\|\leq\rho_0\}$ with $\rho_0$ as in \eqref{rho4}.
Set
$$\beta_0:=\inf_{\|u\|\leq\rho_0}J_{a,\lambda}(u).$$
For any $t>0$,
\begin{equation*}
J_{a,\lambda}(t\phi_1)= -\frac {\lambda-\lambda_1(f)}{2}t^2+\frac{a\lambda_1(f)^2-\int_\Omega g\phi_1^4dx}{4}t^4.
\end{equation*}
Clearly, there exists $t_0>0$ such that $\|t_0\phi_1\|\leq\rho_0$ and $J_{a,\lambda}(t_0\phi_1)<0$. Moreover,
\begin{align*}
J_{a,\lambda}(u)\geq-\frac 12\frac{\lambda}{\lambda_1(f)}\|u\|^2-\frac{\|g\|_\infty}{4S_4^4}\|u\|^4\geq -\frac{\lambda}{2\lambda_1(f)}\rho_0^2-\frac{\|g\|_\infty}{4S_4^4}\rho_0^4.
\end{align*}
Consequently, $-\infty<\beta_0<0$. By the Ekeland variational principle \cite{E}, there exists a $(PS)_{\beta_0}$-sequence $\{u_n\}\subset B_{\rho_0}$. Then by Lemma \ref{LPS}, there exists $u^-\in H^1_0(\Omega)$ such that $J_{a,\lambda}(u^-)=\beta_0$ and $J'_{a,\lambda}(u^-)=0$. Since $J_{a,\lambda}(u^-)=J_{a,\lambda}(|u^-|)$, this can be satisfied by taking $u^->0$ in $\Omega$.
This completes the proof.
\end{proof}

\begin{Theorem}\label{T43}
Suppose that $N=1,2,3$, $p=4$ and conditions $(D1)-(D2)$ hold. Then we have the following results.
\vspace{-6pt}
\begin{enumerate}[(i)]
\item For each $a\geq\Gamma_0$, Equation $(K_{a,\lambda})$ does not admit nontrivial solution whenever $0<\lambda\leq \lambda_1(f)$.\itemsep-4pt
\item For each $a>\Gamma_0$, Equation $(K_{a,\lambda})$ has a positive solution $u^-$ with $J_{a, \lambda}(u^-)<0$ whenever $\lambda>\lambda_1(f)$.
\end{enumerate}
\end{Theorem}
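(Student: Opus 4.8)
The plan is to prove the nonexistence statement $(i)$ from the Nehari-type identity satisfied by any nontrivial solution together with the sharp constant $\Gamma_0$, and to prove the existence statement $(ii)$ by direct minimisation of $J_{a,\lambda}$ on $H^1_0(\Omega)$, the point being that $J_{a,\lambda}$ becomes globally coercive once $a>\Gamma_0$.

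For $(i)$, I would start from the observation that if $u\not\equiv0$ solves $(K_{a,\lambda})$ then testing the equation against $u$ gives
$$a\|u\|^4+\left(\|u\|^2-\lambda\int_\Omega fu^2dx\right)=\int_\Omega gu^4dx.$$
I would then combine three elementary inequalities: $\int_\Omega gu^4dx\leq\Gamma_0\|u\|^4$ by the definition of $\Gamma_0$; $\Gamma_0\|u\|^4\leq a\|u\|^4$ since $a\geq\Gamma_0$; and $\|u\|^2-\lambda\int_\Omega fu^2dx\geq\left(1-\lambda/\lambda_1(f)\right)\|u\|^2\geq0$ by \eqref{R1.2} and $\lambda\leq\lambda_1(f)$. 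The displayed identity then forces each of these to be an equality, which in particular yields $a=\Gamma_0$ and $\|u\|^2-\lambda\int_\Omega fu^2dx=0$. The latter equality is incompatible with $\lambda<\lambda_1(f)$ (it would give $u\equiv0$), so $\lambda=\lambda_1(f)$; and then, by the characterisation \eqref{R1.2} and the simplicity of the principal eigenvalue, $u=c\phi_1$ for some $c\neq0$. Substituting this into the equality $\Gamma_0\|u\|^4=\int_\Omega gu^4dx$ and using $\|\phi_1\|^2=\lambda_1(f)$ gives $\Gamma_0=\lambda_1(f)^{-2}\int_\Omega g\phi_1^4dx$, contradicting the strict inequality $\Gamma_0>\lambda_1(f)^{-2}\int_\Omega g\phi_1^4dx$ recorded before Theorem \ref{T:p=4+}. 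The only delicate point is the borderline case $a=\Gamma_0$; this is exactly where the strict inequality and the simplicity of $\phi_1$ are needed, and it is the step I expect to require the most care.

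For $(ii)$, fix $a>\Gamma_0$ and $\lambda>\lambda_1(f)$. Using $\int_\Omega gu^4dx\leq\Gamma_0\|u\|^4$ and $\int_\Omega fu^2dx\leq\|f\|_\infty S_2^{-2}\|u\|^2$ one obtains
$$J_{a,\lambda}(u)\geq\frac{a-\Gamma_0}{4}\|u\|^4-\left(\frac{\lambda\|f\|_\infty}{2S_2^2}-\frac12\right)\|u\|^2,$$
so, since $a-\Gamma_0>0$, the functional $J_{a,\lambda}$ is coercive and bounded below on $H^1_0(\Omega)$. Next, using $\|\phi_1\|^2=\lambda_1(f)$ and $\int_\Omega f\phi_1^2dx=1$,
$$J_{a,\lambda}(t\phi_1)=\frac{a\lambda_1(f)^2-\int_\Omega g\phi_1^4dx}{4}t^4-\frac{\lambda-\lambda_1(f)}{2}t^2<0$$
for all sufficiently small $t>0$, so $m:=\inf_{H^1_0(\Omega)}J_{a,\lambda}<0=J_{a,\lambda}(0)$. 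Since $N\leq3$, the embedding $H^1_0(\Omega)\hookrightarrow L^4(\Omega)$ is compact, hence $u\mapsto\int_\Omega fu^2dx$ and $u\mapsto\int_\Omega gu^4dx$ are weakly continuous and $J_{a,\lambda}$ is weakly lower semicontinuous; a minimising sequence for $m$ is bounded by coercivity, so along a subsequence it converges weakly to some $u^-$ with $J_{a,\lambda}(u^-)=m$. Then $u^-$ is a critical point of $J_{a,\lambda}$ with negative energy, hence $u^-\not\equiv0$; replacing $u^-$ by $|u^-|$, which is again a minimiser since $J_{a,\lambda}(|u^-|)=J_{a,\lambda}(u^-)$, and invoking elliptic regularity together with the strong maximum principle, one gets $u^->0$ in $\Omega$. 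Alternatively, a bounded $(PS)_m$-sequence can be produced via the Ekeland variational principle and one concludes with Lemma \ref{LPS}; I do not anticipate any genuine obstacle in part $(ii)$ beyond writing down the coercivity estimate.
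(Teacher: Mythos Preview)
Your proposal is correct and follows essentially the same route as the paper: part $(i)$ uses the Nehari identity $a\|u\|^4+\|u\|^2-\lambda\int_\Omega fu^2dx-\int_\Omega gu^4dx=0$ together with $\int_\Omega gu^4dx\leq\Gamma_0\|u\|^4$ and \eqref{u5}, reducing the borderline case $a=\Gamma_0$, $\lambda=\lambda_1(f)$ to $u=c\phi_1$ and then to the contradiction $\Gamma_0=\lambda_1(f)^{-2}\int_\Omega g\phi_1^4dx$; part $(ii)$ obtains coercivity of $J_{a,\lambda}$ from $a>\Gamma_0$, checks $\inf J_{a,\lambda}<0$ via $t\phi_1$, and minimises. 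The only cosmetic differences are that the paper bounds $\|u\|^2-\lambda\int_\Omega fu^2dx$ via \eqref{u5} rather than $\|f\|_\infty S_2^{-2}$, and extracts the minimiser through Ekeland's principle and Lemma \ref{LPS} rather than direct weak lower semicontinuity; neither changes the substance of the argument.
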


\begin{proof}
$(i)$ Suppose that the result is false. Then there exists a solution $u_0\neq 0$ such that
$$I_4(u_0):=\langle J_{a,\lambda}'(u_0), u_0\rangle=a\|u_0\|^4+\|u_0\|^2-\lambda\int_\Omega fu_0^2dx-\int_\Omega gu_0^4dx=0.$$
Since $a\geq\Gamma_0$ and $0<\lambda<\lambda_1(f)$ or  $a>\Gamma_0$ and $\lambda=\lambda_1(f)$, we have
$$I_4(u_0)\geq\left(1-\frac{\lambda}{\lambda_1(f)}\right)\|u_0\|^2+a\|u_0\|^4-\int_\Omega gu_0^4dx>0,$$
which is a contradiction.
For the rest case $a=\Gamma_0$ and $\lambda=\lambda_1(f)$, we can deduce that
$$(I)\ \|u_0\|^2=\lambda_1(f)\int_\Omega fu_0^2dx\quad\text{and}\quad (II)\ \Gamma_0\|u_0\|^4=\int_\Omega gu_0^4dx.$$
Here the result of $(I)$ implies that $u_0=k\phi_1$ for some $k\in\mathbb{R}$ whereas that of $(II)$ implies $k=0$; the results contradict each other. Hence Equation $(K_{a, \lambda})$ does not admit nontrivial solution for all $a\geq\Gamma_0$ and $0<\lambda\leq\lambda_1(f)$.

$(ii)$ By Young's inequality, we deduce that
\begin{align}\label{4.7}
J_{a, \lambda}(u)&=\frac 12\|u\|^2-\frac{\lambda}{2}\int_\Omega fu^2dx+\frac{a-\Gamma_0}{4}\|u\|^4+\frac14\left(\Gamma_0\|u\|^4-\int_\Omega gu^4dx\right)\notag\\
\ &\geq\frac{a-\Gamma_0}{4}\|u\|^4-\frac12\left(\frac{\lambda}{\lambda_1(f)}-1\right)\|u\|^2\notag\\
\ &\geq \frac{a-\Gamma_0}{8}\|u\|^4-\frac{1}{2(a-\Gamma_0)}\left(\frac{\lambda}{\lambda_1(f)}-1\right)^2.
\end{align}
Thus, $J_{a, \lambda}$ is coercive and bounded below for all $a>\Gamma_0$ and $\lambda>\lambda_1(f)$. Now, we consider the infimum of functional $J_{a,\lambda}$ on $H^1_0(\Omega)$. Set
$$\beta_0:=\inf_{u\in H^1_0(\Omega)}J_{a,\lambda}(u).$$
Since
\begin{equation*}
J_{a,\lambda}(t\phi_1)= -\frac {\lambda-\lambda_1(f)}{2}t^2+\frac{a\lambda_1(f)^2-\int_\Omega g\phi_1^4dx}{4}t^4  \text{  for } t>0.
\end{equation*}
This implies that there exists $t_0>0$ such that $J_{a,\lambda}(t_0\phi_1)<0.$ Hence $-\infty<\beta_0<0$. Then by the Ekeland variational principle \cite{E}, there exists a $(PS)_{\beta_0}$-sequence $\{u_n\}\subset H^1_0(\Omega)$. It follows from \eqref{4.7} that $\{u_n\}$ is bounded. Then by Lemma \ref{LPS}, there exists $u^-\in H^1_0(\Omega)$ such that $J_{a,\lambda}(u^-)=\beta_0$ and $J'_{a,\lambda}(u^-)=0$. With $J_{a,\lambda}(u^-)=J_{a,\lambda}(|u^-|)$, we may assume that $u^->0$ in $\Omega$.
This completes the proof.
\end{proof}

Next, we investigate the asymptotic behaviour of solutions.

\begin{Theorem}
Suppose that $\int_\Omega g\phi_1^4dx>0 $ and $ 0<a<\lambda_1(f)^{-2}\int_\Omega g\phi_1^4dx.$ Let $\lambda_n\to\lambda_1^-(f)$ and let $u^+_n$ be the solution obtained in Theorem \ref{T4.2} $(i)$ for Equation $(K_{a,\lambda_n})$. Then we have
$$(i)\ J_{a,\lambda_n}(u^+_n)\to0\quad\quad\text{and}\quad\quad (ii)\ u^+_n\to0.$$
\end{Theorem}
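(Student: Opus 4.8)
The plan is to identify $J_{a,\lambda_n}(u^+_n)$ with the mountain pass level and to bound that level from above by testing $J_{a,\lambda_n}$ along the ray generated by $\phi_1$. The hypothesis $0<a<\lambda_1(f)^{-2}\int_\Omega g\phi_1^4dx$ is exactly the statement $a\|\phi_1\|^4=a\lambda_1(f)^2<\int_\Omega g\phi_1^4dx$, so $\phi_1$ is an admissible choice for the auxiliary function in the proof of Lemma~\ref{MP:p=4}$(i)$; running that construction with this choice, the solution $u^+_n$ of Theorem~\ref{T4.2}$(i)$ is a mountain pass critical point with far point $e_0$ a positive multiple of $\phi_1$, and its value $\alpha_n=J_{a,\lambda_n}(u^+_n)$ obeys, via the segment joining $0$ to $e_0$,
\begin{equation*}
0<\alpha_n\le\max_{t\ge0}J_{a,\lambda_n}(t\phi_1).
\end{equation*}
Since $\|\phi_1\|^2=\lambda_1(f)$ and $\int_\Omega f\phi_1^2dx=1$, one has $J_{a,\lambda_n}(t\phi_1)=\tfrac{\lambda_1(f)-\lambda_n}{2}t^2-\tfrac{\int_\Omega g\phi_1^4dx-a\lambda_1(f)^2}{4}t^4$, whose maximum over $t\ge0$ equals $\tfrac{(\lambda_1(f)-\lambda_n)^2}{4(\int_\Omega g\phi_1^4dx-a\lambda_1(f)^2)}$; this tends to $0$ as $\lambda_n\to\lambda_1^-(f)$, which proves $(i)$.

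For $(ii)$ I would start from the identity $4J_{a,\lambda_n}(u^+_n)-\langle J'_{a,\lambda_n}(u^+_n),u^+_n\rangle=\|u^+_n\|^2-\lambda_n\int_\Omega f(u^+_n)^2dx$ valid when $p=4$, which, since $J'_{a,\lambda_n}(u^+_n)=0$, reads $\|u^+_n\|^2-\lambda_n\int_\Omega f(u^+_n)^2dx=4\alpha_n$. To show $\{u^+_n\}$ is bounded, assume $\|u^+_n\|\to\infty$ along a subsequence and set $v_n=u^+_n/\|u^+_n\|$; dividing the last identity by $\|u^+_n\|^2$, the relation $\langle J'_{a,\lambda_n}(u^+_n),u^+_n\rangle=0$ by $\|u^+_n\|^4$, and $\langle J'_{a,\lambda_n}(u^+_n),\varphi\rangle=0$ by $\|u^+_n\|^3$, then passing to the weak and $L^r$ limit $v_0$ of $v_n$, one obtains $\int_\Omega fv_0^2dx=\lambda_1(f)^{-1}$, $\int_\Omega gv_0^4dx=a$, $-a\Delta v_0=gv_0^3$, and (testing the last relation with $v_0$) $a\|v_0\|^2=a$, so $\|v_0\|=1$ and hence $v_n\to v_0$ in $H^1_0(\Omega)$. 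Then $\|v_0\|^2=\lambda_1(f)\int_\Omega fv_0^2dx$ with $v_0\ge0$ forces $v_0=\lambda_1(f)^{-1/2}\phi_1$ by \eqref{R1.2}, and $\int_\Omega gv_0^4dx=a$ becomes $\int_\Omega g\phi_1^4dx=a\lambda_1(f)^2$, contradicting the hypothesis. Hence $\{u^+_n\}$ is bounded.

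With boundedness established, pass to a subsequence with $u^+_n\rightharpoonup u_0$ in $H^1_0(\Omega)$ and $u^+_n\to u_0$ in $L^r(\Omega)$ for $1\le r<2^*$; testing $\langle J'_{a,\lambda_n}(u^+_n),u^+_n-u_0\rangle=0$ and using $a\|u^+_n\|^2+1\ge1$ exactly as in the proof of Lemma~\ref{LPS} gives $u^+_n\to u_0$ in $H^1_0(\Omega)$. Letting $n\to\infty$ in $J'_{a,\lambda_n}(u^+_n)=0$ shows $u_0$ is a nonnegative solution of $(K_{a,\lambda_1(f)})$, and continuity of the functional together with $(i)$ gives $J_{a,\lambda_1(f)}(u_0)=\lim_n\alpha_n=0$. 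The same identity at $\lambda=\lambda_1(f)$ reads $\|u_0\|^2=\lambda_1(f)\int_\Omega fu_0^2dx$, and together with $\langle J'_{a,\lambda_1(f)}(u_0),u_0\rangle=0$ it yields $a\|u_0\|^4=\int_\Omega gu_0^4dx$. If $u_0\ne0$, then by \eqref{R1.2} $u_0$ is a positive multiple of $\phi_1$, which again forces $\int_\Omega g\phi_1^4dx=a\lambda_1(f)^2$, the same contradiction; hence $u_0=0$, i.e.\ $u^+_n\to0$ in $H^1_0(\Omega)$, and since every subsequence has a further subsequence converging to $0$, the whole sequence converges to $0$.

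The step I expect to be the main obstacle is the boundedness of $\{u^+_n\}$: at the critical Kirchhoff exponent $p=4$ the terms $\tfrac a4\|u\|^4$ and $\tfrac14\int_\Omega gu^4dx$ act at the same scale, so no elementary energy estimate controls $\|u^+_n\|$ and one is forced into the rescaling analysis above, in which it is precisely the strict inequality $a\lambda_1(f)^2<\int_\Omega g\phi_1^4dx$ that produces the contradiction; ruling out a nonzero strong limit in $(ii)$ is then a minor variant of the same computation.
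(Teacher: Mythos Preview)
Your proof of $(i)$ is correct and coincides with the paper's argument: choose $e_0=k\phi_1$ (admissible since $a\lambda_1(f)^2<\int_\Omega g\phi_1^4dx$) and bound the mountain pass level by $\max_{t\ge0}J_{a,\lambda_n}(t\phi_1)=\dfrac{(\lambda_1(f)-\lambda_n)^2}{4(\int_\Omega g\phi_1^4dx-a\lambda_1(f)^2)}\to0$.

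Your proof of $(ii)$ is also correct, but it is considerably more elaborate than the paper's, and your closing remark that ``no elementary energy estimate controls $\|u^+_n\|$'' is actually not true here. The paper exploits the fact that $\lambda_n<\lambda_1(f)$ and the variational characterization \eqref{R1.2} of $\lambda_1(f)$ (equivalently \eqref{u5}) to write, in one line,
\[
\Bigl(1-\tfrac{\lambda_n}{\lambda_1(f)}\Bigr)\|u^+_n\|^2\ \le\ \|u^+_n\|^2-\lambda_n\!\int_\Omega f(u^+_n)^2dx\ =\ 4J_{a,\lambda_n}(u^+_n)\ \le\ \frac{(\lambda_1(f)-\lambda_n)^2}{\int_\Omega g\phi_1^4dx-a\lambda_1(f)^2},
\]
and dividing by $1-\lambda_n/\lambda_1(f)=(\lambda_1(f)-\lambda_n)/\lambda_1(f)$ gives $\|u^+_n\|^2\le C(\lambda_1(f)-\lambda_n)\to0$ directly. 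This bypasses both the rescaling/compactness argument for boundedness and the identification of the limit; the strict inequality $a\lambda_1(f)^2<\int_\Omega g\phi_1^4dx$ is used only in part $(i)$ (to make $\phi_1$ an admissible test direction), not to rule out a hypothetical blow-up profile. Your route, which essentially reproduces the mechanism of Lemma~\ref{BD4}$(ii)$ and then a limit analysis, works as well and has the advantage of not depending on the sign of $\lambda_1(f)-\lambda_n$, but here the approach from below makes the paper's elementary estimate available and much shorter.
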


\begin{proof}
$(i)$ According to the Mountain Pass Theorem,
$$J_{a,\lambda_n}(u^+_n)=\inf_{\gamma\in \Gamma}\max_{t\in[0,1]}J_{a,\lambda_n}(\gamma(t))\quad\mbox{and}\quad
\Gamma:=\{\gamma\in C([0,1], H^1_0(\Omega)) : \gamma(0)=0, \gamma(1)=e_0\},$$
where $e_0$ is obtained in Lemma \ref{MP:p=4} $(i)$. Since $a<\lambda_1(f)^{-2}\int_\Omega g\phi_1^4dx$, we can choose $e_0=k\phi_1$ for some $k>0$. Then a direct calculation gives
\begin{align}\label{4.9}
0<J_{a,\lambda_n}(u^+_n)\leq\max_{t\in[0,1]} J_{a,\lambda_n}(tk\phi_1)=\frac{(\lambda_1(f)-\lambda_n)^3}{4\left(\int_\Omega g\phi_1^4dx-a\lambda_1(f)^2\right)}.
\end{align}
Hence it follows that $\lim_{n\to\infty}J_{a,\lambda_n}(u^+_n)=0$.

$(ii)$ By $\langle J_{a,\lambda_n}'(u^+_n), u^+_n\rangle=0$ and $\eqref{4.9},$ we have
\begin{align*}
\left(1-\frac{\lambda_n}{\lambda_1(f)}\right)\|u^+_n\|^2&\leq\|u^+_n\|^2-\lambda_n\int_\Omega f(u^+_n)^2dx\\
\ &=4J_{a,\lambda_n}(u^+_n)\leq \frac{(\lambda_1(f)-\lambda_n)^3}{\int_\Omega g\phi_1^4dx-a\lambda_1(f)^2},
\end{align*}
and thus
$$\|u^+_n\|^2\leq \frac{\lambda_1(f)(\lambda_1(f)-\lambda_n)^2}{\int_\Omega g\phi_1^4dx-a\lambda_1(f)^2}\longrightarrow 0\ \ \text{as}\ \ n\to\infty.$$
This completes the proof.
\end{proof}

\begin{Theorem}
Suppose that $\max\{0, \lambda_1(f)^{-2}\int_\Omega g\phi_1^pdx\}<a<\Gamma_0.$ Let $\lambda_n\to\lambda_1^+(f)$ and let $u_n^-$ be the solution obtained in Theorem \ref{T4.2} $(iii)$ with $J_{a,\lambda_n}(u^-_n)<0$. Then we have
$$(i)\ u^-_n\to0\quad\quad \text{and}\quad\quad  (ii)\ \frac{u^-_n}{\|u^-_n\|}\to k\phi_1\ \text{in}\ H^1_0(\Omega)\ \text{for some}\ k>0.$$
\end{Theorem}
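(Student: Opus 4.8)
The plan is to mimic, with $p=4$, the proof of Theorem~\ref{T3.1}, the role played there by $\int_\Omega g\phi_1^pdx$ now being played by $\int_\Omega g\phi_1^4dx-a\lambda_1(f)^2$. Recall that in Theorem~\ref{T4.2}$(iii)$ the solution $u_n^-$ is produced as a minimiser of $J_{a,\lambda_n}$ over the ball $B_{\rho_0}=\{u\in H^1_0(\Omega):\|u\|\le\rho_0\}$ with $\rho_0$ as in \eqref{rho4}, and the first thing I would point out is that $\rho_0$ depends only on $a,g,\phi_1,\lambda_1(f),\lambda_2(f)$ (via $\Phi_4(\phi_1)=a\lambda_1(f)^2-\int_\Omega g\phi_1^4dx$ and $B_4$) and not on $\lambda$. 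Hence $\{u_n^-\}$ is bounded, and after passing to a subsequence we have $u_n^-\rightharpoonup u_0^-$ in $H^1_0(\Omega)$ and $u_n^-\to u_0^-$ in every $L^r(\Omega)$, $1\le r<2^*$, with $u_0^-\ge0$.

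For part $(i)$ I would first combine $J_{a,\lambda_n}(u_n^-)<0$ with $\langle J'_{a,\lambda_n}(u_n^-),u_n^-\rangle=0$ to get, as in \eqref{3.7} specialised to $p=4$, the identity $J_{a,\lambda_n}(u_n^-)=\tfrac14\bigl(\|u_n^-\|^2-\lambda_n\int_\Omega f(u_n^-)^2dx\bigr)$, so that $\|u_n^-\|^2-\lambda_n\int_\Omega f(u_n^-)^2dx<0$. The standard trick --- if strong convergence failed in $H^1_0(\Omega)$ then lower semicontinuity of the norm together with the $L^2$-convergence of $u_n^-$ and $f\in L^\infty$ would force $\|u_0^-\|^2-\lambda_1(f)\int_\Omega f(u_0^-)^2dx<0$, contradicting \eqref{R1.2} (which gives $\|u\|^2-\lambda_1(f)\int_\Omega fu^2dx\ge0$ for all $u$) --- upgrades $u_n^-\to u_0^-$ to strong convergence. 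Passing to the limit then yields $\|u_0^-\|^2=\lambda_1(f)\int_\Omega f(u_0^-)^2dx$, whence $u_0^-=c\phi_1$ for some $c\ge0$ by \eqref{R1.2} and the simplicity of the principal eigenfunction; strong convergence also makes $u_0^-$ a critical point of $J_{a,\lambda_1(f)}$, so $\langle J'_{a,\lambda_1(f)}(u_0^-),u_0^-\rangle=0$, which combined with the last equality forces $\int_\Omega g(u_0^-)^4dx=a\|u_0^-\|^4$, i.e. $c^4\bigl(\int_\Omega g\phi_1^4dx-a\lambda_1(f)^2\bigr)=0$. Since $a>\lambda_1(f)^{-2}\int_\Omega g\phi_1^4dx$ the bracket is nonzero, so $c=0$ and $u_n^-\to0$ in $H^1_0(\Omega)$.

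For part $(ii)$, $\beta_0<0$ guarantees $u_n^-\ne0$; I would set $v_n=u_n^-/\|u_n^-\|$, pass to a subsequence with $v_n\rightharpoonup v_0$ in $H^1_0(\Omega)$ and $v_n\to v_0$ in $L^r(\Omega)$, and divide $\langle J'_{a,\lambda_n}(u_n^-),u_n^-\rangle=0$ by $\|u_n^-\|^2$ to obtain $\|v_n\|^2-\lambda_n\int_\Omega fv_n^2dx=\|u_n^-\|^2\bigl(\int_\Omega gv_n^4dx-a\bigr)\to0$ by part $(i)$. The same semicontinuity-versus-\eqref{R1.2} argument yields $v_n\to v_0$ strongly, hence $\|v_0\|=1$ and $\|v_0\|^2=\lambda_1(f)\int_\Omega fv_0^2dx$; with $v_0\ge0$ this gives $v_0=k\phi_1$ with $k=\lambda_1(f)^{-1/2}>0$.

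The argument is essentially bookkeeping once two points are in place: the $\lambda$-independence of $\rho_0$, which is what provides a uniform bound on the $u_n^-$; and the strict inequality $a\lambda_1(f)^2\ne\int_\Omega g\phi_1^4dx$ coming from the hypothesis, which is precisely what rules out a nonzero multiple of $\phi_1$ as the weak limit in part $(i)$. I do not anticipate any genuine obstacle beyond these.
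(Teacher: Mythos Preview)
Your proof is correct and follows essentially the same route as the paper's own argument: the $\lambda$-independence of $\rho_0$ to get boundedness, the lower-semicontinuity trick against \eqref{R1.2} to upgrade to strong convergence, and then the identification $u_0^-=c\phi_1$ together with the identity $a\|u_0^-\|^4=\int_\Omega g(u_0^-)^4dx$ to force $c=0$; part $(ii)$ likewise matches the paper (which simply refers back to Theorem~\ref{T3.1}$(ii)$). The only cosmetic difference is that you recover $a\|u_0^-\|^4=\int_\Omega g(u_0^-)^4dx$ by noting that $u_0^-$ is a critical point of $J_{a,\lambda_1(f)}$, whereas the paper passes to the limit directly in the algebraic relation $\int_\Omega g(u_n^-)^4dx-a\|u_n^-\|^4=\|u_n^-\|^2-\lambda_n\int_\Omega f(u_n^-)^2dx$; the two are equivalent.
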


\begin{proof}
$(i)$ From the proof of Theorem \ref{T4.2} $(iii)$, we have $\|u_n^-\|\leq\rho_0$ for all $n$, where $\rho_0$ is as \eqref{rho4}. Thus, there exist a subsequence $\{u^-_n\}$ and $u^-_0\in H^1_0(\Omega)$ such that $u^-_n \rightharpoonup u^-_0$ in $H^1_0(\Omega)$ and
\begin{equation}\label{5.12}
u^-_n\to u^-_0 \text{ in $L^r(\Omega)$ for all $1\leq r<2^*$.}
\end{equation}
By condition $(D1)$ and $\eqref{5.12},$ we have
\begin{equation}\label{5.13}
\lim_{n\to\infty}\lambda_n\int_\Omega f(u^-_n)^2dx=\lambda_1(f)\int_\Omega f(u^-_0)^2dx.
\end{equation}
It follows from $J_{a,\lambda_n}(u^-_n)<0$ and $\langle J_{a,\lambda_n}'(u^-_n), u^-_n\rangle=0$ that
\begin{equation}\label{5.14}
\int_\Omega g(u_n^-)^4dx-a\|u_n^-\|^4=\|u^-_n\|^2-\lambda_n\int_\Omega f(u^-_n)^2dx<0.
\end{equation}
Suppose that $u^-_n\to u^-_0$ does not hold. Then by $\eqref{5.13}$ and $\eqref{5.14},$ we have
$$\int_\Omega|\nabla u^-_0|^2dx-\lambda_1(f)\int_\Omega f(u^-_0)^2dx <\liminf_{n\to\infty}\left(\|u^-_n\|^2-\lambda_n\int_\Omega f(u^-_n)^2dx\right)\leq0,$$
which is impossible. Hence $u^-_n\to u^-_0$. It follows that
$$(I) \int_\Omega|\nabla u^-_0|^2dx-\lambda_1(f)\int_\Omega f(u^-_0)^2dx=0\quad\text{and}\quad (II) \int_\Omega g(u^-_0)^4dx-a\|u^-_0\|^4=0.$$
The result from $(I)$ implies $u^-_0=k\phi_1$ for some $k\geq0$ and from $(II)$ $k=0$ since $a>\lambda_1(f)^{-2}\int_\Omega g\phi_1^4dx$. Hence $u^-_n\to0$ in $H^1_0(\Omega)$.

$(ii)$ This part of the proof is identical to that of Theorem \ref{T3.1}$ (ii)$ and is not repeated here.
\end{proof}

\begin{Theorem}
Suppose that $0<a<\Gamma_0$ and $0<\lambda<\lambda_1(f).$ Let $u_a$ be the solution obtained in Theorem \ref{T4.2} $(i)$. Then $\|u_a\|\to\infty$ as $a\to\Gamma_0^-$.
\end{Theorem}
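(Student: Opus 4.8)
The plan is to argue by contradiction, leveraging the non-existence statement of Theorem~\ref{T43}~$(i)$ at the threshold value $a=\Gamma_0$. Suppose $\|u_a\|$ does not tend to $\infty$ as $a\to\Gamma_0^-$; then there exist $M>0$ and a sequence $a_n\nearrow\Gamma_0$ with $\|u_{a_n}\|\leq M$ for all $n$. Since $\{u_{a_n}\}$ is bounded, after passing to a subsequence we have $u_{a_n}\rightharpoonup u_*$ in $H^1_0(\Omega)$, and because $N\in\{1,2,3\}$ (so that $4<2^*$) also $u_{a_n}\to u_*$ in $L^r(\Omega)$ for every $1\leq r<2^*$.

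The first step is to upgrade this to strong convergence in $H^1_0(\Omega)$. Since $J'_{a_n,\lambda}(u_{a_n})=0$, testing against $u_{a_n}-u_*$ gives
$$\bigl(a_n\|u_{a_n}\|^2+1\bigr)\int_\Omega\nabla u_{a_n}\cdot\nabla(u_{a_n}-u_*)\,dx=\lambda\int_\Omega fu_{a_n}(u_{a_n}-u_*)\,dx+\int_\Omega g|u_{a_n}|^{2}u_{a_n}(u_{a_n}-u_*)\,dx.$$
By H\"older's inequality, the boundedness of $\{u_{a_n}\}$ in $H^1_0(\Omega)$ and the strong $L^2$- and $L^4$-convergences, the right-hand side tends to $0$. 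Since $a_n\|u_{a_n}\|^2+1\geq1$, this forces $\int_\Omega\nabla u_{a_n}\cdot\nabla(u_{a_n}-u_*)\,dx\to0$, and combined with $u_{a_n}\rightharpoonup u_*$ it yields $\|u_{a_n}-u_*\|^2\to0$, that is, $u_{a_n}\to u_*$ in $H^1_0(\Omega)$.

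The second step is to pass to the limit in the equation and in the energy. With $a_n\to\Gamma_0$ and the strong convergence just established, $\langle J'_{a_n,\lambda}(u_{a_n}),\varphi\rangle\to\langle J'_{\Gamma_0,\lambda}(u_*),\varphi\rangle$ for every $\varphi\in H^1_0(\Omega)$, so $u_*$ is a solution of Equation $(K_{\Gamma_0,\lambda})$. On the other hand, $u_{a_n}$ is the mountain pass critical point furnished by Theorem~\ref{T4.2}~$(i)$, so by the Mountain Pass Theorem together with the estimate in the proof of Lemma~\ref{MP:p=4}~$(i)$ (where $\rho_\lambda$ is independent of $a$),
$$J_{a_n,\lambda}(u_{a_n})\geq\inf_{\|u\|=\rho_\lambda}J_{a_n,\lambda}(u)\geq\frac14\left(1-\frac{\lambda}{\lambda_1(f)}\right)\rho_\lambda^2=:c_0>0$$
for all $n$, with $c_0>0$ independent of $n$ since $\lambda$ is fixed. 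Letting $n\to\infty$ and using strong convergence gives $J_{\Gamma_0,\lambda}(u_*)\geq c_0>0$, hence $u_*\not\equiv0$; this contradicts Theorem~\ref{T43}~$(i)$, which asserts that $(K_{\Gamma_0,\lambda})$ has no nontrivial solution for $0<\lambda\leq\lambda_1(f)$. Therefore $\|u_a\|\to\infty$ as $a\to\Gamma_0^-$.

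I expect the main obstacle to be the strong-convergence step: one must use the strict subcriticality of the cubic term (hence $N\leq3$) to pass to the limit in $\int_\Omega g|u_{a_n}|^2u_{a_n}(u_{a_n}-u_*)\,dx$, and observe that although the Kirchhoff coefficient $a_n\|u_{a_n}\|^2+1$ depends on $a$, it stays bounded below by $1$ and can thus be divided out. Everything else is a routine limiting argument, the decisive point being that the mountain pass level remains bounded below by the $a$-independent constant $c_0>0$ all the way up to $a=\Gamma_0$, which is precisely what is incompatible with the non-existence at the threshold.
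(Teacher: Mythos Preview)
Your proof is correct, but the paper takes a much shorter and more elementary route. Rather than arguing by contradiction and invoking the non-existence result at the threshold, the paper simply reads off a quantitative lower bound from the Nehari identity. Since $\langle J'_{a,\lambda}(u_a),u_a\rangle=0$, one has
\[
\left(1-\frac{\lambda}{\lambda_1(f)}\right)\|u_a\|^2\;\leq\;\|u_a\|^2-\lambda\int_\Omega f u_a^2\,dx\;=\;\int_\Omega g u_a^4\,dx-a\|u_a\|^4\;\leq\;(\Gamma_0-a)\|u_a\|^4,
\]
using \eqref{u5} and the definition of $\Gamma_0$. Dividing by $\|u_a\|^2$ gives $\|u_a\|^2\geq\bigl(1-\lambda/\lambda_1(f)\bigr)\big/(\Gamma_0-a)\to\infty$ as $a\to\Gamma_0^-$.

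The paper's argument is purely algebraic: no compactness, no subsequences, no limiting equation, and it even yields an explicit blow-up rate. Your approach, by contrast, is a soft compactness argument that explains \emph{why} the norm must blow up---if it did not, one could pass to the limit and produce a nontrivial solution at $a=\Gamma_0$, which Theorem~\ref{T43}~$(i)$ forbids. Your route thus makes the link between blow-up and non-existence at the threshold transparent, at the cost of more machinery (strong $H^1_0$-convergence via testing against $u_{a_n}-u_*$, and the uniform-in-$a$ lower bound $c_0$ on the mountain pass level, which you correctly extract from Lemma~\ref{MP:p=4}~$(i)$ since $\rho_\lambda$ is independent of $a$). Both are valid; the paper's is the more economical.
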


\begin{proof}
It is clear that $u_a$ satisfies the following equation
$$a\|u_a\|^4+\|u_a\|^2-\lambda\int_\Omega fu_a^2dx-\int_\Omega gu_a^4dx=0.$$
Thus,
\begin{align*}
\left(1-\frac{\lambda}{\lambda_1(f)}\right)\|u_a\|^2&\leq \|u_a\|^2-\lambda\int_\Omega fu_a^2dx=\int_\Omega gu_a^4dx-a\|u_a\|^4\leq(\Gamma_0-a)\|u_a\|^4.
\end{align*}
This concludes the proof.
\end{proof}

\begin{Theorem}
Suppose that $a>\Gamma_0.$ Let $\lambda_n\to \lambda_1^+(f)$ and let $u^-_n$ be the solution obtained in Theorem \ref{T43} $(ii)$ for Equation $(K_{a,\lambda_n})$. Then we have
$$u^-_n\to0\quad\quad\text{and}\quad\quad J_{a,\lambda_n}(u^-_n)\to0.$$
\end{Theorem}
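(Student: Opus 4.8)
The plan is to read off both conclusions directly from the coercivity estimate \eqref{4.7} established in the proof of Theorem \ref{T43} $(ii)$: for every $a>\Gamma_0$, every $\lambda>\lambda_1(f)$ and every $u\in H^1_0(\Omega)$,
$$J_{a,\lambda}(u)\geq\frac{a-\Gamma_0}{8}\|u\|^4-\frac{1}{2(a-\Gamma_0)}\left(\frac{\lambda}{\lambda_1(f)}-1\right)^2.$$
Recall that $u^-_n$ is, by construction in Theorem \ref{T43} $(ii)$, a global minimizer of $J_{a,\lambda_n}$ on $H^1_0(\Omega)$, so $J_{a,\lambda_n}(u^-_n)=\inf_{v\in H^1_0(\Omega)}J_{a,\lambda_n}(v)<0$; moreover $\lambda_n>\lambda_1(f)$ for all $n$ since $\lambda_n\to\lambda_1^+(f)$, so the displayed inequality applies with $\lambda=\lambda_n$ and $u=u^-_n$.

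First I would prove $u^-_n\to 0$ in $H^1_0(\Omega)$. Evaluating the estimate at $u=u^-_n$ and using $J_{a,\lambda_n}(u^-_n)<0$ gives $\frac{a-\Gamma_0}{8}\|u^-_n\|^4<\frac{1}{2(a-\Gamma_0)}\left(\frac{\lambda_n}{\lambda_1(f)}-1\right)^2$, that is,
$$\|u^-_n\|^4<\frac{4}{(a-\Gamma_0)^2}\left(\frac{\lambda_n}{\lambda_1(f)}-1\right)^2.$$
Since $\lambda_n\to\lambda_1^+(f)$, the right-hand side tends to $0$, hence $\|u^-_n\|\to 0$.

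Next, for $J_{a,\lambda_n}(u^-_n)\to 0$ I would squeeze. Applying the estimate once more at $u=u^-_n$ produces the lower bound $J_{a,\lambda_n}(u^-_n)\geq-\frac{1}{2(a-\Gamma_0)}\left(\frac{\lambda_n}{\lambda_1(f)}-1\right)^2$, which tends to $0$; the matching upper bound is simply $J_{a,\lambda_n}(u^-_n)<0$. Thus $J_{a,\lambda_n}(u^-_n)\to 0$. If a quantitative rate is wanted, one may instead combine this lower bound with the sharper upper bound obtained along the path $t\mapsto t\phi_1$: using $J_{a,\lambda_n}(t\phi_1)=-\frac{\lambda_n-\lambda_1(f)}{2}t^2+\frac{a\lambda_1(f)^2-\int_\Omega g\phi_1^4dx}{4}t^4$ (as computed in the proof of Theorem \ref{T43}) and minimizing over $t\geq 0$ --- the quartic coefficient being positive because $a>\Gamma_0>\lambda_1(f)^{-2}\int_\Omega g\phi_1^4dx$ --- one gets $J_{a,\lambda_n}(u^-_n)\leq-\frac{(\lambda_n-\lambda_1(f))^2}{4\left(a\lambda_1(f)^2-\int_\Omega g\phi_1^4dx\right)}$.

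I expect no genuine obstacle here: the entire analytic content is already packaged in \eqref{4.7}, and both conclusions then follow by elementary estimation, with no compactness or weak-convergence argument needed (in contrast with the asymptotic results in the earlier regimes). The only points worth a word of care are that $a-\Gamma_0>0$, which holds by hypothesis and makes \eqref{4.7} non-trivial, and that $\lambda_n>\lambda_1(f)$, which holds since $\lambda_n\to\lambda_1^+(f)$ and is needed for \eqref{4.7} to apply.
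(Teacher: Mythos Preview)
Your proposal is correct and follows essentially the same approach as the paper: both rely entirely on the coercivity estimate \eqref{4.7} together with the fact that $u^-_n$ is a global minimizer with negative energy. Your version is marginally more direct in that you use only $J_{a,\lambda_n}(u^-_n)<0$ as the upper bound, whereas the paper first computes the explicit upper bound $\inf_{t}J_{a,\lambda_n}(t\phi_1)=-\dfrac{(\lambda_n-\lambda_1(f))^2}{4\bigl(a\lambda_1(f)^2-\int_\Omega g\phi_1^4dx\bigr)}$ (which you mention as optional) before invoking \eqref{4.7}; either way the conclusions follow immediately.
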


\begin{proof}
From the proof of Theorem \ref{T43} $(ii)$, we have
\begin{equation}\label{4.10}
J_{a,\lambda_n}(u^-_n)=\inf_{u\in H^1_0(\Omega)}J_{a,\lambda_n}(u)\leq\inf_{t\in\mathbb{R}}J_{a,\lambda_n}(t\phi_1).
\end{equation}
By a direct calculation, we have
\begin{equation}\label{4.11}
\inf_{t\in\mathbb{R}}J_{a,\lambda_n}(t\phi_1)=\frac{-(\lambda_n-\lambda_1(f))^2}{4\left(a\lambda_1(f)^2-\int_\Omega g\phi_1^4dx\right)} \to 0\quad\text{as $n\to\infty$}.
\end{equation}
Combining $\eqref{4.7}, \eqref{4.10}$ and $\eqref{4.11}$ gives $\lim_{n\to\infty}\|u^-_n\|=0$. It follows from \eqref{4.7} and $\lim_{n\to\infty}\|u^-_n\|=0$
that $\lim_{n\to\infty}J_{a,\lambda_n}(u^-_n)=0.$
\end{proof}

\section{The Proofs of Theorems \ref{T:p<4+}-\ref{T:p<4-2}}

In this final section, we present the proofs of Theorems \ref{T:p<4+}-\ref{T:p<4-2} and we start by showing the boundedness of Palais-Smale sequence using the fact that the functional $J_{a,\lambda}$ is coercive and bounded below on $H^1_0(\Omega)$ for $p<4$. The asymptotic behaviour of solutions is briefly discussed after the proofs of the main theorems.

\begin{Lemma}\label{BD2-4}
Suppose that $N\geq1$, $2<p<\min\{4,2^*\}$ and conditions $(D1)-(D2)$ hold. Then the functional $J_{a,\lambda}$ is coercive and bounded below on $H^1_0(\Omega)$ for every $a, \lambda>0$.
\end{Lemma}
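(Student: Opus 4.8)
The plan is to exploit the fact that the nonlocal term contributes a genuinely quartic leading part $\frac a4\|u\|^4$ to the energy, while the two remaining integral terms are controlled, respectively, by $\|u\|^2$ and by $\|u\|^p$ with $2<p<4$; since both exponents are strictly smaller than $4$, these terms can be absorbed into the quartic one. First I would use condition $(D2)$ together with the Sobolev embedding $\eqref{r}$ to estimate
$$\left|\frac1p\int_\Omega g|u|^pdx\right|\leq\frac{\|g\|_\infty}{p}\|u\|_p^p\leq\frac{\|g\|_\infty}{pS_p^p}\|u\|^p,$$
and condition $(D1)$ together with the continuous embedding $H^1_0(\Omega)\hookrightarrow L^2(\Omega)$ to estimate
$$\left|\frac\lambda2\int_\Omega fu^2dx\right|\leq\frac{\lambda\|f\|_\infty}{2}\|u\|_2^2\leq\frac{\lambda\|f\|_\infty}{2S_2^2}\|u\|^2.$$
Discarding the nonnegative term $\frac12\|u\|^2$, this yields
$$J_{a,\lambda}(u)\geq\frac a4\|u\|^4-\frac{\lambda\|f\|_\infty}{2S_2^2}\|u\|^2-\frac{\|g\|_\infty}{pS_p^p}\|u\|^p.$$

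Next I would absorb the two lower-order terms into the quartic one by Young's inequality. For every $\varepsilon>0$ one has the elementary bound $\|u\|^2\leq\varepsilon\|u\|^4+\tfrac1{4\varepsilon}$, and, since $2<p<4$, applying Young's inequality with conjugate exponents $\tfrac4p$ and $\tfrac4{4-p}$ gives $\|u\|^p\leq\varepsilon\|u\|^4+C(\varepsilon,p)$ for a constant $C(\varepsilon,p)$ depending only on $\varepsilon$ and $p$. Choosing $\varepsilon$ small enough, depending on $a$, $\lambda$, $\|f\|_\infty$, $\|g\|_\infty$, $p$, $S_2$, $S_p$, so that the combined $\varepsilon\|u\|^4$ contributions do not exceed $\frac a8\|u\|^4$, I obtain
$$J_{a,\lambda}(u)\geq\frac a8\|u\|^4-M$$
for a constant $M=M(a,\lambda,p,\Omega,f,g)\geq0$. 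This single inequality shows at once that $J_{a,\lambda}$ is bounded below, by $-M$, and coercive, since $\frac a8\|u\|^4-M\to\infty$ as $\|u\|\to\infty$, which is exactly the assertion of the lemma.

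I do not expect a serious obstacle here: the argument is a direct consequence of the strict inequality $p<4$, which is precisely what makes the source term subquartic and hence absorbable into $\frac a4\|u\|^4$ for every $a>0$. The sign-changing character of $f$ and $g$ plays no role, as all three integral terms are dominated by their $L^\infty$-norms. This is also the precise point at which the analysis for $p=4$ must diverge, where coercivity instead requires the additional restriction $a\geq\Gamma_0$.
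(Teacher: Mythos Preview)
Your argument is correct and follows essentially the same route as the paper: bound the $g$-integral by $\|g\|_\infty S_p^{-p}\|u\|^p$, then use Young's inequality to absorb the subquartic terms into $\tfrac a4\|u\|^4$. The only cosmetic difference is that the paper controls the $f$-integral via the eigenvalue inequality \eqref{u5}, obtaining the factor $\tfrac12\bigl(1-\tfrac{\lambda}{\lambda_1(f)}\bigr)$ rather than $-\tfrac{\lambda\|f\|_\infty}{2S_2^2}$, and then splits into the cases $\lambda\leq\lambda_1(f)$ and $\lambda>\lambda_1(f)$; this yields the explicit lower bounds \eqref{2}--\eqref{3}, the first of which is quoted later to prove $u_a\to0$ as $a\to\infty$ when $\lambda=\lambda_1(f)$.
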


\begin{proof}
For any $a, \lambda>0$, we have
$$J_{a,\lambda}(u)\geq \frac a4\|u\|^4+\frac12\left(1-\frac{\lambda}{\lambda_1(f)}\right)\|u\|^2-\frac{\|g\|_\infty}{pS_p^p}\|u\|^p,$$
and Young's inequality gives
\begin{equation*}
\frac{\|g\|_\infty}{pS_p^p}\|u\|^p\leq\frac {a}{16}\|u\|^4+\frac{4-p}{4p}\left(\frac{\|g\|_\infty}{S_p^p}\right)^{\frac{4}{4-p}}\left(\frac 4a\right)^{\frac{p}{4-p}}.
\end{equation*}
Thus, for $0<\lambda\leq\lambda_1(f)$, we have
\begin{equation}\label{2}
J_{a,\lambda}(u)\geq\frac{3}{16}a\|u\|^4+ \frac12\left(1-\frac{\lambda}{\lambda_1(f)}\right)\|u\|^2-\frac{4-p}{4p}\left(\frac{\|g\|_\infty}{S_p^p}\right)^{\frac{4}{4-p}}\left(\frac 4a\right)^{\frac{p}{4-p}}.
\end{equation}
Using Young's inequality when $\lambda>\lambda_1(f)$, we obtain
\begin{equation*}
\frac12\left|1-\frac{\lambda}{\lambda_1(f)}\right|\|u\|^2\leq\frac {a}{16}\|u\|^4+\frac1a\left|1-\frac{\lambda}{\lambda_1(f)}\right|,
\end{equation*}
and thus
\begin{equation}\label{3}
J_{a,\lambda}(u)\geq\frac a8\|u\|^4- \frac1a\left|1-\frac{\lambda}{\lambda_1(f)}\right|-\frac{4-p}{4p}\left(\frac{\|g\|_\infty}{S_p^p}\right)^{\frac{4}{4-p}}\left(\frac 4a\right)^{\frac{p}{4-p}}.
\end{equation}
Hence $J_{a,\lambda}$ is coercive and bounded below on $H^1_0(\Omega)$ for every $a, \lambda>0$.
\end{proof}

{\bf Now we proceed to the proof of Theorem \ref{T:p<4+}:} $(i)$ By Lemma \ref{MP:p<4+}, for each $a>0$, the functional $J_{a,\lambda}$ has the mountain pass geometry for every $\max\{0, \lambda_a^+\}<\lambda<\lambda_1(f)$. Then using Lemma \ref{BD2-4}, we can deduce that the $(PS)_\alpha$-sequence for $J_{a,\lambda}$ is bounded for any $\alpha\in\mathbb{R}$. Thus, by Lemma \ref{LPS}, there exists $u^+\in H^1_0(\Omega)$ such that $J_{a, \lambda}(u^+)>0$ and $J'_{a,\lambda}(u^+)=0$. Since $J_{a,\lambda}(u^+)=J_{a,\lambda}(|u^+|)$, we assume without loss of generality that $u^+>0$ in $\Omega$.

Next, we consider the infimum of $J_{a,\lambda}$ on the set $\{u\in H^1_0(\Omega) : \|u\|\geq\rho_{a,\lambda}^+\}$ with $\rho_{a,\lambda}^+$ as given in Lemma \ref{MP:p<4+}.
Set
$$\beta_1:=\inf_{\|u\|\geq\rho_{a,\lambda}^+}J_{a,\lambda}(u).$$
It follows from Lemmas \ref{MP:p<4+} and \ref{BD2-4} that $-\infty<\beta_1<0$. By the Ekeland variational principle \cite{E}, there exists a $(PS)_{\beta_1}$-sequence $\{u_n\}$ which is bounded according to Lemma \ref{BD2-4}. Then by Lemma \ref{LPS}, there exists $u^-\in H^1_0(\Omega)$ such that $J_{a,\lambda}(u^-)=\beta_1$ and $J'_{a,\lambda}(u^-)=0$. Since $J_{a,\lambda}(u^-)=J_{a,\lambda}(|u^-|)$, we may take $u^->0$ in $\Omega$.

$(ii)$ We consider the infimum of $J_{a,\lambda}$ on $H^1_0(\Omega)$. Set
$$\beta_2:=\inf_{u\in H^1_0(\Omega)}J_{a,\lambda}(u),$$
and $\beta_2$ is finite by Lemma \ref{BD2-4}.
For any $t>0$,
\begin{equation*}
J_{a,\lambda}(t\phi_1)= -\frac {\lambda-\lambda_1(f)}{2}t^2-\frac{\int_\Omega g\phi_1^pdx}{p}t^p+\frac{a\lambda_1(f)^2}{4}t^4.
\end{equation*}
Thus, there exists $t_0>0$ such that $J_{a,\lambda}(t_0\phi_1)<0$ and so $\beta_2<0$. By the Ekeland variational principle \cite{E}, there exists a $(PS)_{\beta_2}$-sequence $\{u_n\}\subset H^1_0(\Omega)$ and the sequence is bounded as a result of Lemma \ref{BD2-4}. Then by Lemma \ref{LPS}, there exists $u^-\in H^1_0(\Omega)$ such that $J_{a,\lambda}(u^-)=\beta_2$ and $J'_{a,\lambda}(u^-)=0$. Since $J_{a,\lambda}(u^-)=J_{a,\lambda}(|u^-|)$, we may assume that $u^->0$ in $\Omega$.

$(iii)$ Note that if $u_0\neq0$ is a solution, then
$$\langle J_{a,\lambda}'(u_0), u_0\rangle=a\|u_0\|^4+\|u_0\|^2-\lambda\int_\Omega fu_0^2dx-\int_\Omega g|u_0|^pdx=0.$$
Thus, we consider the following:
$$I_p(u):=a\|u\|^4+\|u\|^2-\lambda\int_\Omega fu^2dx-\int_\Omega g|u|^pdx\quad\text{for $u\in H^1_0(\Omega)\setminus\{0\}$.}$$
For $\lambda>0$, we may deduce that
$$I_p(u)\geq a\|u\|^4+\left(1-\frac{\lambda}{\lambda_1(f)}\right)\|u\|^2-\frac{\|g\|_\infty}{S_p^p}\|u\|^{p}.$$
We consider the function $H : \mathbb{R}^+\to\mathbb{R}$ which is defined by
$$H(x)=ax^2+\left(1-\frac{\lambda}{\lambda_1(f)}\right)-\frac{\|g\|_\infty}{S_p^p}x^{p-2}, \quad x>0.$$
It is easy to obtain the absolute minimum value
$$H(x_0)=1-\frac{\lambda}{\lambda_1(f)}-(4-p)\left(\frac{\|g\|_\infty}{2S_p^p}\right)^{\frac{2}{4-p}}\left(\frac{p-2}{a}\right)^{\frac{p-2}{4-p}}$$
at $x_0=\left(\frac{(p-2)\|g\|_\infty}{2aS_p^p}\right)^{\frac{1}{4-p}}$. Then we have $H(x_0)>0$ as $\lambda<\Lambda_a^+$,
which implies that $H(x)>0$ for all $x>0$ as $\lambda<\Lambda_a^+$. Hence we conclude that
$$I_p(u)\geq\|u\|^2\left(a\|u\|^2+\left(1-\frac{\lambda}{\lambda_1(f)}\right)-\frac{\|g\|_\infty}{S_p^p}\|u\|^{p-2}\right)>0\quad\text{for all $u\in H^1_0(\Omega)\setminus\{0\}$},$$
whenever $0<\lambda<\Lambda_a^+$. This completes the proof.

Next, we investigate the asymptotic behaviour of solutions.

\begin{Theorem}
For each $a>0$ and $\lambda=\lambda_1(f)$, let $u_a$ be the solution obtained in Theorem \ref{T:p<4+} $(ii)$. Then $u_a\to0$ as $a\to\infty$.
\end{Theorem}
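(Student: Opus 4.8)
The plan is to combine the variational characterization of $u_a$ with the coercivity estimate already established in Lemma \ref{BD2-4}. Recall that in the proof of Theorem \ref{T:p<4+} $(ii)$ the solution $u_a$ is produced as a global minimizer of $J_{a,\lambda_1(f)}$ on $H^1_0(\Omega)$, so that
$$J_{a,\lambda_1(f)}(u_a)=\beta_2:=\inf_{u\in H^1_0(\Omega)}J_{a,\lambda_1(f)}(u)<0.$$
The key point is that at the value $\lambda=\lambda_1(f)$ the indefinite quadratic term $\tfrac12\bigl(1-\lambda/\lambda_1(f)\bigr)\|u\|^2$ appearing in estimate \eqref{2} vanishes identically, so \eqref{2} becomes
$$J_{a,\lambda_1(f)}(u)\ \ge\ \frac{3}{16}\,a\,\|u\|^4-\frac{4-p}{4p}\left(\frac{\|g\|_\infty}{S_p^p}\right)^{\frac{4}{4-p}}\left(\frac{4}{a}\right)^{\frac{p}{4-p}}\qquad\text{for all }u\in H^1_0(\Omega).$$

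First I would evaluate this inequality at $u=u_a$ and use $J_{a,\lambda_1(f)}(u_a)=\beta_2<0$ to get
$$\frac{3}{16}\,a\,\|u_a\|^4\ <\ \frac{4-p}{4p}\left(\frac{\|g\|_\infty}{S_p^p}\right)^{\frac{4}{4-p}}\left(\frac{4}{a}\right)^{\frac{p}{4-p}},$$
and hence, collecting the powers of $a$,
$$\|u_a\|^4\ \le\ \frac{16(4-p)}{3p}\,4^{\frac{p}{4-p}}\left(\frac{\|g\|_\infty}{S_p^p}\right)^{\frac{4}{4-p}}\,a^{-\frac{4}{4-p}}.$$
Since $2<p<4$ the exponent $-\tfrac{4}{4-p}$ is negative, so the right-hand side tends to $0$ as $a\to\infty$; therefore $\|u_a\|\to0$, that is, $u_a\to0$ in $H^1_0(\Omega)$.

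There is no genuine obstacle here: both ingredients — the minimizing property of $u_a$ and the quantitative coercivity bound \eqref{2} — are already in hand, and the only point requiring a little care is the bookkeeping of the power of $a$ in the lower bound, namely that the factor $a^{-1}$ coming from dividing by $a$ and the factor $a^{-p/(4-p)}$ coming from the Young's-inequality remainder in \eqref{2} combine to the genuinely decaying rate $a^{-4/(4-p)}$. If one additionally wishes to record the companion facts $\beta_2\to0$ and $J_{a,\lambda_1(f)}(u_a)\to0$, one can test $J_{a,\lambda_1(f)}$ against $t\phi_1$ and minimize over $t>0$ — the minimizer being $t_*=\bigl(\int_\Omega g\phi_1^p\,dx\,/\,a\lambda_1(f)^2\bigr)^{1/(4-p)}$ — which yields an upper bound for $\beta_2$ of order $-a^{-p/(4-p)}$; combined with the lower bound above this pins down $\beta_2\to0$, though it is not needed for the stated conclusion.
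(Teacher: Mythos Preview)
Your proof is correct and is precisely the argument the paper has in mind: the paper's own proof is the single sentence ``This result is readily obtained using \eqref{2} with $J_{a,\lambda_1(f)}(u_a)<0$,'' and you have supplied exactly those details. (A harmless arithmetic slip: the constant in your displayed bound for $\|u_a\|^4$ should be $\tfrac{4(4-p)}{3p}$ rather than $\tfrac{16(4-p)}{3p}$, but this does not affect the conclusion.)
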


\begin{proof}
This result is readily obtained using \eqref{2} with $J_{a,\lambda_1(f)}(u_a)<0$.
\end{proof}


{\bf Here we present the proof of Theorem \ref{T:p<4-1}:} $(i)$ By Lemma \ref{MP:p<4-1}, for each $0<a<a_0(p)$, there exists $\overline{\delta}_a>0$ such that the functional $J_{a,\lambda}$ has the mountain pass geometry for every $0<\lambda<\lambda_1(f)+\overline{\delta}_a$. Then by Lemma \ref{BD2-4}, we can deduce that each $(PS)_\alpha$-sequence for $J_{a,\lambda}$ is bounded. Thus, by Lemma \ref{LPS}, there exists $u^+\in H^1_0(\Omega)$ such that $J_{a, \lambda}(u^+)>0$ and $J'_{a,\lambda}(u^+)=0$. Since $J_{a,\lambda}(u^+)=J_{a,\lambda}(|u^+|)$, we may have $u^+>0$ in $\Omega$.

Next, we consider the infimum of $J_{a,\lambda}$ on the set $\{u\in H^1_0(\Omega) : \|u\|\geq\overline{\rho}_{a,\lambda}\}$ with $\overline\rho_{a,\lambda}$ as in Lemma \ref{MP:p<4-1}.
Set
$$\beta_1:=\inf_{\|u\|\geq\overline{\rho}_{a,\lambda}}J_{a,\lambda}(u).$$
It follows from Lemmas \ref{BD2-4} and \ref{MP:p<4-1} that $-\infty<\beta_1<0$. By the Ekeland variational principle \cite{E}, there exists a $(PS)_{\beta_1}$-sequence $\{u_n\}$ and the sequence is bounded by Lemma \ref{BD2-4}. Using Lemma \ref{LPS}, there exists $u_1^-\in H^1_0(\Omega)$ such that $J_{a,\lambda}(u_1^-)=\beta_1$ and $J'_{a,\lambda}(u_1^-)=0$. Since $J_{a,\lambda}(u_1^-)=J_{a,\lambda}(|u_1^-|)$, we assume without loss of generality that $u_1^->0$ in $\Omega$. Here we complete the proof of ($i$-1) and the partial proof of $(i$-2).

To complete the proof for the remaining part of $(i$-2), for each $\lambda_1(f)<\lambda<\lambda_1(f)+\overline{\delta}_a$, we consider the infimum of $J_{a,\lambda}$ on the closed ball $\overline{B}_{\overline{\rho}_{a,\lambda}}=\{u\in H^1_0(\Omega) : \|u\|\leq\overline{\rho}_{a,\lambda}\}$.
Set
$$\beta_2:=\inf_{\|u\|\leq\overline{\rho}_{a,\lambda}}J_{a,\lambda}(u).$$
For any $t>0$,
$$ J_{a,\lambda}(t\phi_1)=-\frac{\lambda-\lambda_1(f)}{2}t^2+\frac{|\int_\Omega g\phi_1^pdx|}{p}t^p+\frac {a\lambda_1(f)^2}{4}t^4.$$
Clearly, there exists $t_0>0$ such that $\|t_0\phi_1\|<\overline{\rho}_{a,\lambda}$ and $J_{a,\lambda}(t_0\phi_1)<0$
giving $-\infty<\beta_2<0$. By the Ekeland variational principle \cite{E}, there exists a $(PS)_{\beta_2}$-sequence $\{u_n\}\subset B_{\rho_{a,\lambda}}$. Then using Lemma \ref{LPS}, there exists $u_2^-\in H^1_0(\Omega)$ such that $J_{a,\lambda}(u_2^-)=\beta_2$ and $J'_{a,\lambda}(u_2^-)=0$. Since $J_{a,\lambda}(u_2^-)=J_{a,\lambda}(|u_2^-|)$, we may have $u_2^->0$ in $\Omega$.

$(ii)$ This is proved using the same argument of Theorem \ref{T:p<4+} $ (ii)$ and is not repeated here.

$(iii)$ Using the same argument in the proof of Theorem \ref{T:p<4+} $ (iii)$, we consider the following:
 $$I_p(u):=a\|u\|^4+\|u\|^2-\lambda\int_\Omega fu^2dx-\int_\Omega g|u|^pdx\quad\text{for $u\in H^1_0(\Omega)\setminus\{0\}$.}$$
We decompose each $u$ as $u=t\phi_1+w$, where $t\in\mathbb{R}$, $w\in H^1_0(\Omega)$ and $\int_\Omega \nabla w \nabla\phi_1dx=0$. Then for $0<\lambda\leq\lambda_1(f)$, we have
\begin{equation}\label{1.6.1}
\|u\|^2-\lambda\int_\Omega fu^2dx\geq(\lambda_1(f)-\lambda)t^2+\left(1-\frac{\lambda}{\lambda_2(f)}\right)\|w\|^2\geq \left(1-\frac{\lambda}{\lambda_2(f)}\right)\|w\|^2.
\end{equation}
Repeating the same process in $\eqref{g1}$ and $\eqref{g2}$ gives
\begin{align}\label{1.6.2}
-\int_\Omega g|u|^pdx&=-\int_\Omega g|t\phi_1|^pdx+\left(\int_\Omega g|t\phi_1|^pdx-\int_\Omega g|t\phi_1+w|^pdx\right)\notag\\
\ &\geq \left|\int_\Omega g\phi_1^pdx\right||t|^p-2^{p-2}(p-1)\|g\|_\infty\|\phi_1\|_p^p B^{\frac{p}{p-1}}|t|^p-\frac{2^{p-2}\|g\|_\infty}{S_p^p}\left( B^{-p}+p\right)\|w\|^p\notag\\
\ &> -\frac{2^{p-1}\|g\|_\infty}{S_p^p B^p}\|w\|^p,
\end{align}
where $ B=\left(\frac{|\int_\Omega g\phi_1^pdx|}{2^{p-1}(p-1)\|g\|_\infty\|\phi_1\|_p^p}\right)^{\frac{p-1}{p}}.$
Using Young's inequality then gives
\begin{align}\label{1.6.3}
\ &\ \frac{2^{p-1}\|g\|_\infty}{S_p^p B^p}\|w\|^p\notag\\
\leq&\ \left(1-\frac{\lambda}{\lambda_2(f)}\right)\|w\|^2+(4p-8)(4-p)^{\frac{4-p}{p-2}}\left(\frac{\|g\|_\infty}{S_p^p B^p}\right)^{\frac{2}{p-2}}\left(1-\frac{\lambda}{\lambda_2(f)}\right)^{\frac{p-4}{p-2}}\|w\|^4.
\end{align}
It follows from $\eqref{1.6.1}-\eqref{1.6.3}$ that
\begin{align*}
I_p(u)&> a\|u\|^4-(4p-8)(4-p)^{\frac{4-p}{p-2}}\left(\frac{\|g\|_\infty}{S_p^p B^p}\right)^{\frac{2}{p-2}}\left(1-\frac{\lambda}{\lambda_2(f)}\right)^{\frac{p-4}{p-2}}\|w\|^4\\
\ &> \|u\|^4\left[a-(4p-8)(4-p)^{\frac{4-p}{p-2}}\left(\frac{\|g\|_\infty}{S_p^p B^p}\right)^{\frac{2}{p-2}}\left(1-\frac{\lambda}{\lambda_2(f)}\right)^{\frac{p-4}{p-2}}\right]\\
\ &> 0\quad \text{for $0<\lambda\leq\min\{\lambda_1(f), \Lambda_a^-\}$}.
\end{align*}
We completes this proof.

{\bf The proof of Theorem \ref{T:p<4-2}:} Using Lemma \ref{MP:p<4-2}, this proof is essential same as that of Theorem \ref{T:p<4-1} $ (i)$ and is not repeated here.


$${\bf Acknowledgement}$$
K.H. Wang and T.F. Wu was supported in part by the Ministry of Science and Technology, Taiwan(Grant No. 108-2115-M-390-007-MY2 and Grant No. 108-2811-M-390-500).

\end{document}